\newtheorem{thm}{Theorem}[section]
\newtheorem{prop}[thm]{Proposition}
\newtheorem{lemma}[thm]{Lemma}
\newtheorem{defin}[thm]{Definition}
\newtheorem{cor}[thm]{Corollary}
\theoremstyle{definition}
\newtheorem{rema}[thm]{Remark}
\numberwithin{equation}{section}
\newcommand{\triend}{\parbox{2mm}{\hfill} \hfill\mbox{\hspace{0.2mm}}\hfill$\triangle$}
\newenvironment{rem}{\begin{rema}}{\triend\end{rema}}
\newcommand{\coker}{\operatorname{coker}}
\newcommand{\Aut}{\operatorname{Aut}}
\newcommand{\rk}{\operatorname{rk}}
\newcommand{\de}{\mbox{\rm d}}
\newcommand{\Hom}{\operatorname{Hom}}
\newcommand{\Iso}{\operatorname{Iso}}
\newcommand{\GL}{\operatorname{GL}}
\newcommand{\im}{\operatorname{Im}}
\newcommand{\Pic}{\operatorname{Pic}}
\newcommand{\Ol}{\mathcal{O}}
\newcommand{\Q}{\mathcal{Q}}
\newcommand{\A}{\mathcal{A}}
\newcommand{\G}{\mathcal{G}}
\newcommand{\F}{\mathcal{F}}
\newcommand{\I}{\mathcal{I}}
\newcommand{\M}{\mathcal{M}}
\newcommand{\E}{\mathcal{E}}
\newcommand{\U}{\mathcal{U}}
\newcommand{\V}{\mathcal{V}}
\newcommand{\W}{\mathcal{W}}
\newcommand{\K}{\mathcal{K}}
\newcommand{\T}{\mathcal{T}}
\newcommand{\Hg}{\mathcal{H}}
\newcommand{\C}{\mathcal{C}}
\newcommand{\Il}{\mathscr{I}}
\newcommand{\id}{\operatorname{id}}
\newcommand{\lExt}{\operatorname{\mathcal{E}\textit{xt}}}
\newcommand{\lTor}{\operatorname{\mathcal{T}\!\textit{or}}}
\newcommand{\lHom}{\operatorname{\mathcal{H}\!\textit{om}}}
\newcommand{\Spec}{\operatorname{Spec}}
\newcommand{\Mk}{\M^n(r,a,c)}
\newcommand{\Pk}{P_{\vec{k}}}
\newcommand{\Lk}{L_{\vec{k}}}
\newcommand{\Gk}{G_{\vec{k}}}
\newcommand{\Z}{\mathbb{Z}}
\newcommand{\Vek}{\mathbb{V}_{\vec{k}}}
\newcommand{\Com}{\mathbb{C}}
\newcommand{\Pu}{\mathbb{P}^1}
\newcommand{\On}{\Ol_{\Sigma_n}}
\newcommand{\Sc}{\mathfrak{Sch}}
\newcommand{\Ob}{\operatorname{Ob}}
\newcommand{\Fa}{\mathfrak{F}}
\newcommand{\Ua}{\mathfrak{U}}
\newcommand{\Va}{\mathfrak{V}}
\newcommand{\Wa}{\mathfrak{W}}
\newcommand{\ca}{\mathfrak{c}}
\newcommand{\da}{\mathfrak{d}}
\newcommand{\X}{\mathfrak{X}}
\newcommand{\Uk}{\U_{\vec{k}}}
\newcommand{\Vk}{\V_{\vec{k}}}
\newcommand{\Wk}{\W_{\vec{k}}}
\newcommand{\Mek}{M_{\vec{k}}}
\newcommand{\li}{\ell_\infty}
\newcommand{\Ui}{\U_{\vec{k},\infty}}
\newcommand{\Vi}{\V_{\vec{k},\infty}}
\newcommand{\Wi}{\W_{\vec{k},\infty}}
\newcommand{\hN}{\mathfrak{N}_{\vec{k}}}
\newcommand{\Ga}{\mathfrak{G}}
\newcommand{\Hl}{\mathscr{H}}
\newcommand{\bpsi}{\bar{\psi}}
\newcommand{\pu}{\pi}
\newcommand{\tT}{\Sigma_n\times\Pk}
\newcommand{\Tu}{\Xi}
\newcommand{\tTi}{\li\times\Pk}
\newcommand{\Ti}{\Xi_\infty}
\newcommand{\ta}{\tilde{\mathfrak{t}}}
\newcommand{\tu}{\mathfrak{t}}
\newcommand{\ua}{\tilde{\mathfrak{u}}}
\newcommand{\uu}{\mathfrak{u}}
\newcommand{\q}{\bm{q}}
\newcommand{\pr}{\bm{p}}
\newcommand{\tU}{\widetilde{\mathfrak{U}}_{\vec{k}}}
\newcommand{\tV}{\widetilde{\mathfrak{V}}_{\vec{k}}}
\newcommand{\tW}{\widetilde{\mathfrak{W}}_{\vec{k}}}
\newcommand{\tE}{\widetilde{\mathfrak{E}}_{\vec{k}}}
\newcommand{\tUi}{\widetilde{\mathfrak{U}}_{\vec{k},\infty}}
\newcommand{\tVi}{\widetilde{\mathfrak{V}}_{\vec{k},\infty}}
\newcommand{\tWi}{\widetilde{\mathfrak{W}}_{\vec{k},\infty}}
\newcommand{\tEi}{\widetilde{\mathfrak{E}}_{\vec{k},\infty}}
\newcommand{\tMo}{\widetilde{\mathbb{M}}_{\vec{k}}}
\newcommand{\tMoi}{\widetilde{\mathbb{M}}_{\vec{k},\infty}}
\newcommand{\dT}{\Upsilon}
\newcommand{\dTi}{\Upsilon_\infty}
\newcommand{\dt}{\breve{\mathfrak{t}}}
\newcommand{\du}{\breve{\mathfrak{u}}}
\newcommand{\Mo}{\mathbb{M}_{\vec{k}}}
\newcommand{\Moi}{\mathbb{M}_{\vec{k},\infty}}
\newcommand{\dE}{\breve{\mathfrak{E}}_{\vec{k}}}
\newcommand{\dEi}{\breve{\mathfrak{E}}_{\vec{k},\infty}}
\newcommand{\dMo}{\breve{\mathbb{M}}_{\vec{k}}}
\newcommand{\dMoi}{\breve{\mathbb{M}}_{\vec{k},\infty}}
\newcommand{\tTheta}{\widetilde{\Theta}_{\vec{k}}}
\newcommand{\Sym}{\operatorname{Sym}}
\newcommand{\Thu}{\Theta_{\vec{k}}}
\newcommand{\Eu}{\ensuremath{\mathfrak{E}_{\vec{k}}}}
\newcommand{\trho}{\tilde{\rho}}
\newcommand{\rhoi}{\rho_{\infty}}
\newcommand{\tA}{\widetilde{A}_{\vec{k}}}
\newcommand{\tB}{\widetilde{B}_{\vec{k}}}
\newcommand{\Uu}{\mathfrak{U}_{\vec{k}}}
\newcommand{\Vu}{\mathfrak{V}_{\vec{k}}}
\newcommand{\Wu}{\mathfrak{W}_{\vec{k}}}
\newcommand{\Eui}{\mathfrak{E}_{\vec{k},\infty}}
\newcommand{\Uui}{\mathfrak{U}_{\vec{k},\infty}}
\newcommand{\Vui}{\mathfrak{V}_{\vec{k},\infty}}
\newcommand{\Wui}{\mathfrak{W}_{\vec{k},\infty}}
\newcommand{\Al}{\mathscr{A}}
\newcommand{\Au}{A_{\vec{k}}}
\newcommand{\Bu}{B_{\vec{k}}}
\newcommand{\Aui}{A_{\vec{k},\infty}}
\newcommand{\Bui}{B_{\vec{k},\infty}}
\newcommand{\Ext}{\operatorname{Ext}}
\newcommand{\Y}{\mathfrak{Y}}
\newcommand{\Wek}{\mathbb{W}_{\vec{k}}}
\newcommand{\Sl}{\mathscr{S}}
\newcommand{\fb}{\bar{f}}
\newcommand{\ev}{\operatorname{ev}}
\newcommand{\Ll}{\mathscr{L}}
\newcommand{\tf}{\tilde{f}}
\newcommand{\Kl}{\mathscr{K}}
\newcommand{\Ma}{\bm{\mu}}
\newcommand{\oMa}{\bm{\nu}}
\newcommand{\Mod}{{\M}\mspace{-1mu}od_{\vec{k}}}
\newcommand{\Sets}{\mathfrak{Sets}}
\newcommand{\lhra}{\lhook\joinrel\relbar\joinrel\rightarrow}
\begin{document}
\begin{flushright}
This is a revision of the paper published in \\ Adv. Geom., {\bf 15} (2015), pp.~55--76
\end{flushright}
\title[Framed sheaves on Hirzebruch surfaces]{\large MONADS FOR FRAMED SHEAVES\\[10pt]  ON HIRZEBRUCH SURFACES}
\bigskip
\date{Revised \today}
\subjclass[2010]{14D20;  14D21; 14J60} 
\keywords{}
\thanks{E-mail: {\tt bartocci@dima.unige.it, bruzzo@sissa.it, clsrava@gmail.com}\\ \indent
Research partially supported by {\sc prin}   ``Geometria delle variet\`a  algebriche
e dei loro spazi dei moduli'' and by {\sc gnsaga-in}d{\sc am}.  
 U.B. is a member of the {\sc vbac} group.}

\maketitle
\markright{\sc C. Bartocci, U. Bruzzo, C.L.S. Rava}
\thispagestyle{empty}

\begin{center}
{\sc Claudio Bartocci,$^\P$ Ugo Bruzzo,$^{\S\ddag\star}$ and Claudio L. S. Rava$^\P$}  \\[10pt]  \small 
$^\P$Dipartimento di Matematica, Universit\`a di Genova, \\Via Dodecaneso 35, 16146 Genova, 
Italia\\[3pt]
$^\star$Departamento de Matem\'atica, Universidade Federal de Santa Catarina, \\ 
  Campus Universit\'ario Trindade,
  88040-900 Florian\'opolis SC, Brasil \\[3pt]
  $^{\S}$SISSA (Scuola Internazionale Superiore di Studi Avanzati),\\   Via Bonomea 265, 34136 Trieste, Italia\\[3pt]
  $^\ddag$Istituto Nazionale di Fisica Nucleare, Sezione di Trieste
\end{center}

\bigskip\bigskip

\begin{abstract}  
We define  monads for framed torsion-free sheaves on Hirzebruch surfaces and use them to construct moduli spaces for these objects.  These moduli spaces are  smooth algebraic varieties, and we show that they are fine  by constructing a universal monad.
\end{abstract}

{\footnotesize
\setcounter{tocdepth}{1}
\tableofcontents
}

\section{Introduction}
Moduli spaces of framed sheaves over projective surfaces have been the object of some interest over the last few years. In the case of the complex projective plane --- generalizing the classical result in \cite{Do} ---  these moduli spaces are resolutions of singularities of the moduli space of ideal instantons on the four-sphere $S^4$ \cite{Nakabook},
and as such, they have been used to compute Nekrasov's partition function, i.e.,   the partition function of a (suitably twisted) $N=2$ topological super Yang-Mills theory  (see \cite{Ne,BFMT} and also \cite{GaLiu} for the more general case of toric surfaces).

More generally, they are at the basis of the so-called instanton counting \cite{NY}. Fine moduli spaces of framed sheaves were constructed by Huybrechts and Lehn \cite{HL1,HL2} by introducing a stability condition.  Bruzzo and Markushevich \cite{BruMar} showed that framed sheaves  on projective surfaces, even without a stability condition, give  rise to fine moduli spaces.

Moduli spaces of framed bundles on the projective plane {were} considered by Donaldson  \cite{Do,Do-Kro}, { who established their
isomorphism with the moduli of framed instantons on $S^4$}. This was extended by King to the case of the blowup of the projective plane at a point  \cite{Ki} and by Buchdahl \cite{Bu2} to the case of multiple blowups. The degenerate case (including torsion-free sheaves) was first considered, as already cited, by Nakajima \cite{Nakabook} for the projective plane, and then by Nakajima and Yoshioka \cite{NY} for the blown-up plane. The case of multiple blowups was studied by Henni \cite{Henni}.

In this paper we consider torsion-free sheaves on a Hirzebruch surface $\Sigma_n$, for $n> 0$, that are framed to the trivial bundle on a generic  line  
$\ell_\infty$ in $\Sigma_n$. We    construct  a moduli space  for  such sheaves  by using monads.  This   allows us to obtain a   moduli space which is a smooth quasi-projective variety. These moduli spaces are  also   shown to be fine, and this   implies  that they are isomorphic to the moduli spaces constructed in \cite{BruMar} (and   therefore embed as   open subschemes  into Huybrechts-Lehn's moduli spaces).

The monads we use generalize to   torsion-free sheaves  those introduced by Buchdahl \cite{Bu} for the locally-free case (he was actually interested in  $\mu$-stable vector bundles on Hirzebruch surfaces with $c_1=0$, $c_2=2$). Indeed, we show in Corollary \ref{pro2} that for any framed torsion-free sheaf $(\E,\theta)$ on $\Sigma_n$, the underlying sheaf $\E$ is isomorphic to the cohomology of a monad
\begin{equation}
\xymatrix{
\Uk \ar[r]^-\alpha & \Vk \ar[r]^-\beta & \Wk
}
\label{eqI1}
\end{equation}
whose terms  depend only on the Chern character $\textrm{ch}(\E)=(r,aE,-c-\frac12na^2)$ (here $E$ is the exceptional curve in $\Sigma_n$, i.e., the unique irreducible   curve in $\Sigma_n$ squaring to $-n$, and we   put $\vec{k}=(n,r,a,c)$). 

We denote by $\Lk$ the subset of $\Hom(\Uk,\Vk)\oplus\Hom(\Vk,\Wk)$ formed by pairs $(\alpha,\beta)$ coming
from a framed sheaf $(\E,\theta)$. We  prove in Proposition \ref{6prop3} and   Lemma \ref{6lm10} that $\Lk$ is a smooth variety. We construct a principal $\GL(r,\Com)$-bundle $\Pk$ over $\Lk$ whose fibre over a point $(\alpha,\beta)$ can be identified with the space of framings for the cohomology of the complex \eqref{eqI1}. 

The group $\Gk$ of isomorphisms of monads of the form \eqref{eqI1} acts on $\Pk$, and  the moduli space $\Mk$ of framed sheaves on $\Sigma_n$ with  given Chern character is set-theoretically defined as the quotient $\Pk/\Gk$. Theorem \ref{proMk} proves that $\Mk$ inherits from $\Pk$ a structure of smooth algebraic variety.  As a corollary,  $\Mk$ turns out to be irreducible. Moreover, Lemma \ref{pro3} states that two monads of the form \eqref{eqI1} are isomorphic if and only it their cohomologies are isomorphic, and this ensures that there is a bijection between $\Mk$ and set of isomorphism classes 
of framed sheaves on $\Sigma_n$ (isomorphisms of framed sheaves are introduced in Definition \ref{DEf3}). This enables us to show that the set of these classes is nonempty if and only if  $ 2c \ge na(1-a)$ (hence, at least when $a\ne 0$, it turns out that the moduli space can be empty even if it has positive expected dimension).

We prove   the fineness of the moduli space by constructing a universal family $\left(\Eu,\Thu\right)$ of framed sheaves on $\Sigma_n$ parametrized by $\Mk$ (for the precise notion of family see Definition \ref{defFam}). 

These results are the basis for further work where we give a detailed description of the moduli spaces when the topological invariants satisfy the lower bound $ 2c = na(1-a)$. Moreover, considering   the rank one case, they allow us to   obtain a rather  explicit construction of the Hilbert schemes of points of the total spaces of the line bundles $\Ol_{\mathbb P^1}(-n)$. In both cases the results are achieved 
by using explicit ADHM descriptions \cite{new}.

By  ``scheme'' we   mean a noetherian reduced scheme of finite type over $\Com$. If  $X$ and $S$ are schemes,     $\F$ is a sheaf of $\Ol_{X\times S}$-modules, and $F$ is a morphism between two such sheaves, we shall denote by  $\F_s$ (resp.~$F_s$) the restriction of $\F$ (resp.~$F$) to the fibre of $X\times S\longrightarrow S$ over the point $s\in S$.

{\bf Acknowledgments.} We thank A.~A.~Henni, B.~Kenda, V.~Lanza, D.~Markushevich and J.~Scalise for useful discussions. We also thank the referee for the careful reading of our manuscript which helped us to considerably improve the presentation. 

\medskip
\section{Monads}
 A monad $M$ on a scheme $T$ is a three-term complex of locally-free $\mathcal O_T$-modules, having nontrivial cohomology only in the middle term:
\begin{equation}
M\,:\qquad\xymatrix{
0 \ar[r]& \mathcal{U}\ar[r]^a &
\mathcal{V}
\ar[r]^b &\mathcal{W} \ar[r] &0\,.
}
\label{eq384}
\end{equation}
The cohomology of the monad will be denoted by 
 $\mathcal{E}(M)$. It is a coherent $\mathcal O_T$-module.
 A \emph{morphism} (\emph{isomorphism}) \emph{of monads} is     a morphism (isomorphism) of complexes.
 
The \emph{display} of the monad \eqref{eq384} is 
the commutative diagram (with exact rows and columns)
\begin{equation}
\begin{gathered}
\xymatrix{
  & & 0 \ar[d] & 0 \ar[d] &  \\
0 \ar[r] & \mathcal{U} \ar@{=}[d] \ar[r]^a & \mathcal{B} \ar[d] \ar[r] & \mathcal{E} \ar[d] \ar[r] & 0 \\
0 \ar[r] & \mathcal{U} \ar[r]^a & \mathcal{V} \ar[d]^b \ar[r] & \mathcal{A} \ar[d]^{\nu} \ar[r] & 0 \\  \
  & & \mathcal{W} \ar@{=}[r] \ar[d] & \mathcal{W} \ar[d] & \\
  & & 0 & 0 & 
}
\end{gathered}
\label{eq44}
\end{equation}
where $\mathcal{A}:=\coker a $, $\mathcal{B}:=\ker b $, $\mathcal{E}=\mathcal{E}(M) $, and all morphisms are naturally induced.
 
 Let $T=X\times S$, where $X$ is a smooth connected projective variety over $\Com$, and $S$ a scheme. We denote by $t_i$, $i=1,2$ the canonical projections onto the first and second factor, respectively. If  $\F$ is an $\Ol_{T}$-module,
we denote by $\F_s$ its restriction to the fibre of $T$ over $s\in S$.

\begin{lemma} \label{lmInjSurj}
 Let $\F$ and $\G$ be coherent $\Ol_{T}$-modules,  flat on $S$, and let
 $\varphi\colon\F\to \G$ be a morphism. If for every closed point  $s\in S$ the restricted morphism $\varphi_{s}$ is injective (surjective), then $\varphi$ is injective (surjective).
\end{lemma}
\begin{proof}
Denote $\K=\ker\varphi$ and $\Q=\coker\varphi$. Let $\Ol_{X}(1)$ be an ample line bundle on $X$, so that   the line bundle $\Ol_{T}(1)=t_{1}^{*}\Ol_{X}(1)$ is relatively ample. By Serre's theorem, for $m\gg 0$ one has an exact sequence of coherent $\Ol_{S}$-modules
\begin{equation}
\xymatrix{
0 \ar[r] & t_{2*}(\K(m)) \ar[r] & t_{2*}(\F(m)) \ar[rr]^-{t_{2*}(\varphi(m))} & & t_{2*}(\G(m)) \ar[r] & t_{2*}(\Q(m)) \ar[r] & 0\,,
}
\label{eqLSeq}
\end{equation}
where   $\varphi(m)=\varphi\otimes_{\Ol_{T}}\id_{\Ol_{T}(m)}$. Moreover there are surjections
\begin{equation}
t_{2}^{*}[t_{2*}(\K(m))] \twoheadrightarrow \K(m)\,,
\qquad 
t_{2}^{*}[t_{2*}(\Q(m))] \twoheadrightarrow \Q(m)\,.
\label{eqSurj}
\end{equation}
We may assume that the sheaves $t_{2*}(\F(m))$ and $t_{2*}(\G(m))$ are locally free, and moreover
that \begin{equation}
H^{i}(T_{s},\F_{s}(m))=H^{i}(T_{s},\G_{s}(m))=H^{i}(T_{s},\K_{s}(m))=0
\label{eqHiFG}
\end{equation}
for $i>0$, and for all $s\in S$.

For all closed points $s\in S$ one has a commutative diagram
\begin{equation}
\xymatrix{
t_{2*}(\F(m))\otimes k(s) \ar[rrr]^-{t_{2*}(\varphi(m))\otimes k(s)} \ar[d] & & & t_{2*}(\G(m))\otimes k(s) \ar[d]\\
H^{0}(T_{s},\F_{s}(m)) \ar[rrr]^{H^{0}(T_{s},\varphi_{s}(m))} & & & H^{0}(T_{s},\G_{s}(m))
}
\label{eqDia}
\end{equation}
where the vertical arrows are the natural ones. By eq.~\eqref{eqHiFG}  they  are isomorphisms.
 
If $\varphi_{s}$ is injective,  eq.~\eqref{eqDia} implies that $t_{2*}(\varphi(m))\otimes k(s)$ is injective. It is easy to see that this implies
\begin{equation}
t_{2*}(\K(m))\otimes k(s)\simeq\lTor^{\,\Ol_{S}}_{1}(\im[t_{2*}(\varphi(m))],k(s))\,.
\end{equation}
There is an open subset $U\subseteq S$  where the right-hand side of this equation is zero. By Nakayama's lemma, the stalks of $t_{2*}(\K(m))$ at the closed points
of $U$ vanish. By \cite[Lemma~2.8]{Eis} it follows that $t_{2*}(\K(m))|_{U}=0$. Since $t_{2*}(\K(m))$ is a subsheaf of a locally free sheaf it   vanishes. The injectivity of $\varphi$ follows from \eqref{eqSurj}.

If $\varphi_{s}$ is surjective, eq.~\eqref{eqHiFG} implies that the morphism $H^{0}(T_{s},\varphi_{s}(m))$ is surjective, and from eq.~\eqref{eqDia} one deduces that   the morphism $t_{2*}(\varphi(m))\otimes k(s)$ is surjective as well. By Nakayama's lemma, the germ $t_{2*}(\varphi(m))_{s}$ is surjective, and \cite[Cor.~2.9]{Eis} implies that $t_{2*}(\varphi(m))$ is surjective, whence
$t_{2*}(\Q(m))=0$.
The thesis follows from eq.~\eqref{eqSurj}.
\end{proof}

Let
\begin{align*}
M&\colon\qquad
\xymatrix{
0 \ar[r] & \U \ar[r]^-{a} & \V \ar[r]^-{b} & \W \ar[r] & 0
}
\end{align*}
be a monad on $T$. We call $\E$  its cohomology, which we assume to be flat on $S$.
\begin{lemma}
\label{lmMs} \label{lmFlat}
For all points  $s\in S$  the restricted complex $M_{s}$  is a monad, whose cohomology is isomorphic to  $\E_{s}$.
\end{lemma}
\begin{proof}
Let us consider the short exact sequence
\begin{equation}\label{lemma2.2}
\xymatrix{
0 \ar[r] & \U \ar[r]^-{a} & \ker b \ar[r] & \E \ar[r] & 0\,.
}
\end{equation}
If $s\in S$, the restricted morphism $a_{s}$ is injective by Lemma 2.1.4 in \cite{HL-book} as $\ker b$ and $\E$ are flat on $S$. The thesis follows easily.
\end{proof}

\begin{prop}  \label{teo1'}
Let $M$, $M'$ be two monads on $T$, and $\E$, $\E'$ their cohomologies, which we assume to be flat on $S$. 
The sheaf $\lHom_{\Ol_{T}}(\E,\E')$ is flat on $S$. Moreover, for every $s\in S$ there is an isomorphism
$\lHom_{\Ol_{T}}(\E,\E')_{s}\simeq \lHom_{\Ol_{T_{s}}}(\E_{s},\E'_{s})$.
\label{eqPropFl}
\end{prop}
\begin{proof}
We consider the monads $M$, $M'$ as complexes
of $\Ol_T$-modules with nonzero entries in degrees -1, 0 and 1. Let
$\T^\bullet$ be the total complex of the double complex $\lHom_{\Ol_{T}}(M,M')$,
i.e.,
$$\T^i = \bigoplus_{p+q=i} \lHom_{\Ol_{T}}(M^{-q},M'^p)$$
with differentials $D^i\colon \T^i \to \T^{i+1}$ defined as usual. Since the terms of $M$ and the sheaves $\im a$ and $\im b$ are  locally free, we can apply the ``dual K\"unneth Theorem'' (see 
e.g.~\cite[Exercise 3.6.1]{Weib})
to $(\T^{\bullet},D^{\bullet})$, obtaining 
\begin{equation*}
\Hg^{i}(\T^{\bullet},D^{\bullet})=
\begin{cases}
\lExt^{1}_{\Ol_{T}}(\E,\E') &\qquad\text{if $i=1$}\\
\lHom_{\Ol_{T}}(\E,\E') &\qquad\text{if $i=0$}\\
0 &\qquad\text{otherwise.}
\end{cases}
\end{equation*}
One deduces the short exact sequences
\begin{gather}
\xymatrix{
0 \ar[r] & \ker D^{1} \ar[r] & \T^{1} \ar[r]^-{D^{1}} & \T^{2} \ar[r] & 0
}
\label{eqSeq1}
\\
\xymatrix{
0 \ar[r] & \im D^{0} \ar[r]^-{\kappa} & \ker D^{1} \ar[r] & \lExt^{1}_{\Ol_{T}}(\E,\E')  \ar[r] & 0
}
\label{eqSeqGlE}
\\
\xymatrix{
0 \ar[r] & \ker D^{0} \ar[r] & \T^{0} \ar[r]^-{D^{0}} & \im D^{0} \ar[r] & 0
}
\label{eqSeq0}
\\
\xymatrix{
0 \ar[r] & \im D^{-1} \ar[r]^-{\iota} & \ker D^{0} \ar[r] & \lHom_{\Ol_{T}}(\E,\E') \ar[r] & 0\,.
}
\label{eqSeqGl}
\end{gather}
Since the terms of  $\T^{\bullet}$ are locally free, $\ker D^{1}$ is locally free as well.

 Notice that for all $s\in S$ 
\begin{equation}
(\im D^{0})_{s}= \im D^{0}_{s}\,,\qquad (\ker D^{1})_{s}=\ker D^{1}_{s}\,.
\label{eqRestrE}
\end{equation}
Here the first isomorphism is a consequence of the right exactness of the functor $-\otimes_{\Ol_{T}}\Ol_{T_{s}}$, while the second    comes from eq.~\eqref{eqSeq1}.  By restricting \eqref{eqSeqGlE} to $T_{s}$ and   using \eqref{eqRestrE} one gets
\begin{equation}
\xymatrix{
\im D^{0}_{s} \ar[r]^-{\kappa_{s}} & \ker D^{1}_{s} \ar[r] & \lExt^{1}_{\Ol_{T}}(\E,\E')_{s} \ar[r] & 0
}\,.
\label{eqSeqGlEs}
\end{equation}
By Lemma \ref{lmMs} one can repeat the construction for the restricted sheaves $\E_{s}$ and $\E'_{s}$ on $T_{s}$, starting from the double complex $\lHom_{\Ol_{T_{s}}}(M_{s},M'_{s})$. One gets the short exact sequence
\begin{equation}
\xymatrix{
0 \ar[r] & \im D^{0}_{s} \ar[r]^-{l_{s}} & \ker D^{1}_{s} \ar[r] & \lExt^{1}_{\Ol_{T_{s}}}(\E_{s},\E'_{s}) \ar[r] & 0
}\,.
\label{eqSeqLocE}
\end{equation}
By comparing \eqref{eqSeqGlEs} and \eqref{eqSeqLocE} it is easy to deduce that $\kappa_{s}=l_{s}$, which implies that $\kappa_{s}$ is injective for all points $s\in S$. From \cite[Lemma~2.1.4]{HL-book} it follows that $\lExt^{1}_{\Ol_{T}}(\E,\E')$ is flat on $S$, and by comparing the two previous exact sequences one also gets  the isomorphism
\begin{equation*}
\lExt^{1}_{\Ol_{T}}(\E,\E')_{s}\simeq \lExt^{1}_{\Ol_{T_{s}}}(\E_{s},\E'_{s})\,.
\end{equation*}
Since $\ker D^{1}$ and $\T^{0}$ are locally free, by applying \cite[Prop.~III.9.1A.(e)]{Har} to eqs. \eqref{eqSeqGlE} and \eqref{eqSeq0} one deduces that $\im D^{0}$ and $\ker D^{0}$ are flat on $S$.

Analogously to \eqref{eqRestrE}, for all $s\in S$ we have
\begin{equation}
(\im D^{-1})_{s}= \im D^{-1}_{s}\,,\qquad (\ker D^{0})_{s}=\ker D^{0}_{s}\,.
\label{eqRestr}
\end{equation}
Again, the first isomorphism is a consequence of the right exactness of the functor $-\otimes_{\Ol_{T}}\Ol_{T_{s}}$. To prove the second isomorphism, we claim that
\begin{equation*}
\lTor^{\Ol_{T}}_{i}(\im D^{0},\Ol_{T_{s}})=0
\end{equation*}
for all $i>0$. Indeed the definition of $T_{s}$ implies the isomorphism $-\otimes_{\Ol_{T}}\Ol_{T_{s}}\simeq -\otimes_{\Ol_{S}}k(s)$. The claim is a consequence of the $S$-flatness of $\im D^{0}$, and the second isomorphism in \eqref{eqRestr} follows from eq. \eqref{eqSeq0}. By restricting \eqref{eqSeqGl} to $T_{s}$ and   using \eqref{eqRestr} one gets
\begin{equation}
\xymatrix{
\im D^{-1}_{s} \ar[r]^-{\iota_{s}} & \ker D^{0}_{s} \ar[r] & \lHom_{\Ol_{T}}(\E,\E')_{s} \ar[r] & 0
}\,.
\label{eqSeqGls}
\end{equation}
Again one can repeat the construction for the restricted sheaves $\E_{s}$ and $\E'_{s}$ on $T_{s}$, getting the short exact sequence
\begin{equation}
\xymatrix{
0 \ar[r] & \im D^{-1}_{s} \ar[r]^-{j_{s}} & \ker D^{0}_{s} \ar[r] & \lHom_{\Ol_{T_{s}}}(\E_{s},\E'_{s}) \ar[r] & 0
}\,.
\label{eqSeqLoc}
\end{equation}
By comparing \eqref{eqSeqGls} and \eqref{eqSeqLoc} it is easy to deduce that $\iota_{s}=j_{s}$, which implies that $\iota_{s}$ is injective for all points $s\in S$. From \cite[Lemma~2.1.4]{HL-book} it follows that $\lHom_{\Ol_{T}}(\E,\E')$ is flat on $S$, and by comparing the two previous exact sequences one also gets  the isomorphism
\begin{equation*}
\lHom_{\Ol_{T}}(\E,\E')_{s}\simeq \lHom_{\Ol_{T_{s}}}(\E_{s},\E'_{s})\,.
\end{equation*}
\end{proof}

\medskip
\section{Statement of the main result}\label{Main result}
Let $\Sigma_n$ be the $n$-th Hirzebruch surface, i.e., the projective closure of the total space of the line bundle $\mathcal{O}_{\mathbb{P}^1}(-n)$. We denote by $F$ the class in $\Pic(\Sigma_n)$  of the  natural ruling $\Sigma_n\longrightarrow\Pu$, by $H$ the class of the section of the ruling squaring to $n$, and by $E$    the class of the section squaring to $-n$. One has $E=H-nF$ and $F^2=0$, $H\cdot F=1$.
We fix a curve  $\ell_{\infty}\simeq\Pu$ in $\Sigma_n$ linearly equivalent to $H$ and call it   the ``line at infinity''.

One has $\Pic(\Sigma_n)=\Z H\oplus\Z F$. For any  sheaf $\E$  of $\On$-modules we shall write
\begin{equation*}
\mathcal{E}(p,q):=\mathcal{E}\otimes\On(pH+qF)\qquad p,q\in \mathbb{Z}\,.
\end{equation*}

\begin{lemma} \label{lmVanH}
\begin{align*}
H^0(\On(p,q))\neq0\qquad&\text{if and only if}\qquad
\begin{cases}
p\geq0\\
np+q\geq0\,;
\end{cases}\\
H^1(\On(p,q))\neq0\qquad&\text{if and only if}\qquad
\begin{cases}
p\geq0\\
q\leq-2
\end{cases}
\qquad\text{or}\qquad
\begin{cases}
p\leq-2\\
q\geq n\,;
\end{cases}\\
H^2(\On(p,q))\neq0\qquad&\text{if and only if}\qquad
\begin{cases}
p\leq-2\\
np+q\leq-(n+2)\,.
\end{cases}
\end{align*}
\end{lemma}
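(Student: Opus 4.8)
The plan is to push everything down to $\Pu$ along the ruling $\pi\colon\Sigma_n\to\Pu$ and read off the answer from the cohomology of line bundles on $\Pu$. Write $\mathcal L=\On(p,q)=\On(pH+qF)$. First I would identify $\Sigma_n$ with the projective bundle $\mathbb{P}(\Ol_{\Pu}\oplus\Ol_{\Pu}(n))$ in such a way that $H$ is the relative hyperplane class $\xi$: writing $\xi=aH+bF$, the relations $\xi\cdot F=1$ and $\xi^2=\deg(\Ol_{\Pu}\oplus\Ol_{\Pu}(n))=n$ force $a=1$, $b=0$ by nondegeneracy of the intersection form on $\Pic(\Sigma_n)=\Z H\oplus\Z F$, so $\xi=H$. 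Since $F=\pi^{\ast}(\mathrm{pt})$, the projection formula gives $\On(qF)=\pi^{\ast}\Ol_{\Pu}(q)$, and therefore, for $p\geq0$,
\begin{equation*}
\pi_{\ast}\mathcal L\simeq\Ol_{\Pu}(q)\otimes\Sym^{p}\!\left(\Ol_{\Pu}\oplus\Ol_{\Pu}(n)\right)\simeq\bigoplus_{j=0}^{p}\Ol_{\Pu}(jn+q),
\end{equation*}
while $\pi_{\ast}\mathcal L=0$ for $p<0$; moreover $R^{1}\pi_{\ast}\mathcal L=0$ whenever $p\geq-1$, because the restriction of $\mathcal L$ to each fibre is $\Ol_{\Pu}(p)$ and $H^{1}(\Pu,\Ol_{\Pu}(p))=0$ for $p\geq-1$.

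The three statements then follow from the (degenerate) Leray spectral sequence of $\pi$ together with the facts $H^{0}(\Pu,\Ol_{\Pu}(d))\neq0\iff d\geq0$ and $H^{1}(\Pu,\Ol_{\Pu}(d))\neq0\iff d\leq-2$. For the $H^0$ claim I would use $H^{0}(\Sigma_n,\mathcal L)=H^{0}(\Pu,\pi_{\ast}\mathcal L)$: for $p\geq0$ the summand of largest degree is the one with $j=p$, so a section exists iff $np+q\geq0$, whereas for $p<0$ the pushforward vanishes; this yields exactly $p\geq0$ and $np+q\geq0$. The $H^2$ claim I would obtain from Serre duality $H^{2}(\Sigma_n,\mathcal L)\simeq H^{0}(\Sigma_n, K_{\Sigma_n}\otimes\mathcal L^{-1})^{\vee}$ with $K_{\Sigma_n}=\On(-2H+(n-2)F)$: applying the $H^0$ criterion to $\On((-2-p)H+(n-2-q)F)$ turns the conditions $-2-p\geq0$ and $n(-2-p)+(n-2-q)\geq0$ into $p\leq-2$ and $np+q\leq-(n+2)$.

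For $H^1$, since both $\Pu$ and the fibres of $\pi$ have cohomological dimension $1$, the Leray spectral sequence degenerates at $E_2$ and produces a short exact sequence
\begin{equation*}
0\longrightarrow H^{1}(\Pu,\pi_{\ast}\mathcal L)\longrightarrow H^{1}(\Sigma_n,\mathcal L)\longrightarrow H^{0}(\Pu,R^{1}\pi_{\ast}\mathcal L)\longrightarrow0,
\end{equation*}
so that $H^{1}(\Sigma_n,\mathcal L)\neq0$ precisely when one of the two outer terms is nonzero. When $p\geq0$ the right term vanishes and $H^{1}(\Pu,\bigoplus_{j}\Ol_{\Pu}(jn+q))\neq0$ iff some $jn+q\leq-2$, i.e.\ (taking $j=0$) iff $q\leq-2$. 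When $p=-1$ both $\pi_{\ast}\mathcal L$ and $R^{1}\pi_{\ast}\mathcal L$ vanish, so $H^{1}=0$. When $p\leq-2$ I would avoid computing $R^{1}\pi_{\ast}$ directly and instead use Serre duality once more, $H^{1}(\Sigma_n,\mathcal L)\simeq H^{1}(\Sigma_n,K_{\Sigma_n}\otimes\mathcal L^{-1})^{\vee}$, whose argument has first coordinate $-2-p\geq0$: the already-settled case gives nonvanishing iff $n-2-q\leq-2$, that is $q\geq n$. Collecting the three cases reproduces the stated condition for $H^1$.

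The main obstacle is purely a matter of normalization: fixing the projective-bundle convention so that $H$ (rather than $E$ or $-H$) is the relative hyperplane class and so that the summands of $\pi_{\ast}\On(pH)$ come out as $\Ol_{\Pu}(jn)$ with the correct signs. I would pin this down and cross-check it against Riemann--Roch on $\Sigma_n$, verifying that $\chi(\On(pH))$ agrees with $\sum_{j=0}^{p}h^{0}(\Pu,\Ol_{\Pu}(jn))$ for $p\geq0$; once the splitting type is secured, the remainder is a routine extraction of nonvanishing ranges on $\Pu$.
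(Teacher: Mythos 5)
Your proof is correct, and every step checks out: the intersection-theoretic pinning of the relative hyperplane class ($\xi\cdot F=1$, $\xi^2=n$ forces $\xi=H$, which disposes of the normalization worry you raise at the end), the splitting $\pi_*\On(p,q)\simeq\bigoplus_{j=0}^p\Ol_{\Pu}(jn+q)$ for $p\geq0$, the vanishing of $\pi_*$ (resp.\ $R^1\pi_*$) for $p<0$ (resp.\ $p\geq-1$) by Grauert, the Leray short exact sequence (the differential $d_2\colon E_2^{0,1}\to E_2^{2,0}$ dies because the base is a curve), and the Serre-duality bootstraps using $K_{\Sigma_n}=\On(-2H+(n-2)F)$, which I verified by adjunction against both $F$ and $H$. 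It is worth noting how this compares with the paper: the paper gives no argument at all for this lemma, saying only that the proof is ``similar to King's proof for the case $n=1$'' and citing pp.~22--23 of King's thesis, so your write-up actually supplies what the paper leaves implicit. Your route --- direct image along the ruling plus Leray and two applications of Serre duality --- is the standard mechanism behind such computations, treats all $n\geq1$ uniformly rather than adapting a proof written for $\mathbb{F}_1$, and is fully self-contained; the only part of your plan that turns out to be unnecessary is the final Riemann--Roch cross-check, since the Grothendieck relation already fixes the splitting type unambiguously. One small observation: your case analysis silently confirms the consistency of the statement at $p=-1$, where all three cohomology groups vanish, which is a useful sanity check the lemma's ``if and only if'' formulation demands.
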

\begin{proof}  Similar to King's proof for the case $n=1$ \cite[pp.~22-23]{Ki}. \end{proof}
Among the several definitions of framed sheaves available in the literature, we shall adopt the following.
 \begin{defin}
\label{DEf2}
A \emph{framed sheaf} is a pair $(\mathcal{E},\theta) $, where
\begin{enumerate}
 \item 
$\mathcal{E}$ is a torsion-free sheaf on $\Sigma_n$ such that
\begin{equation}
\mathcal{E}|_{\ell_{\infty}}\simeq\Ol_{\li}^{\oplus r},
\label{6eq34}
\end{equation}
with $r=\rk(\E)$.
\item
$\theta $ is a fixed isomorphism $\theta:\mathcal{E}|_{\ell_{\infty}}\stackrel{\sim}{\longrightarrow}\Ol_{\li}^{\oplus r}$.
\end{enumerate}
\end{defin}
Condition \eqref{6eq34} implies $c_1({\mathcal{E}})\propto E $.
The isomorphism $\theta$ is the so-called \emph{framing at infinity}.
By ``sheaf trivial at infinity'' we shall mean a sheaf satisfying condition \eqref{6eq34} (without any assigned framing).

\begin{defin}
\label{DEf3}
An isomorphism $\Lambda$ between two framed sheaves $(\mathcal{E},\theta) $ and $(\mathcal{E}',\theta') $ is an isomorphism $\Lambda: \mathcal{E}\stackrel{\sim}{\longrightarrow}\mathcal{E}'$ such that the following diagram commutes:
\begin{equation*}
\xymatrix{
{\mathcal{E}}|_{\ell_{\infty}}\ar[r]^{\theta}\ar[d]_{\Lambda_\infty} & \Ol_{\li}^{\oplus r} \\
{\mathcal{E}}'|_{\ell_{\infty}}\ar[ru]_{\theta'} &  }
\end{equation*}
where $\Lambda_\infty:=\Lambda|_{\ell_{\infty}} $.
\end{defin} 
Let $\Mk$ be  the set of isomorphism classes of framed sheaves on $\Sigma_n$ having rank $r$, first Chern class $aE$, and second Chern class $c$: we shall prove that this set can be endowed with a structure of a smooth algebraic variety. We restrict ourselves to the case $n\geq1$ and assume that the framed sheaves are normalized in such a way that $0\leq a\leq r-1 $.

In order to simplify the statement of Theorem \ref{teo2} we introduce some notation.
\begin{itemize}\label{pagcond}
 \item 
We denote by $\vec{k}$ a quadruple $(n,r,a,c)$, and define $k_i\in\Z$, $i=1,\ldots,4$ as follows:
\begin{equation}
k_1=c+\dfrac{1}{2}na(a-1)\qquad\text{and}\qquad
\begin{cases}
\begin{aligned}
k_2&=k_1+na\\
k_3&=k_1+(n-1)a\\
k_4&=k_1+r-a\,.
\end{aligned}
\end{cases}
\label{eqki}
\end{equation}
The other definitions needed to state Theorem \ref{teo2} make sense only under the assumption $k_1\geq0$. The Theorem itself will give a deeper meaning to this inequality.
\item
We introduce the locally-free sheaves
\begin{equation}
\left\{
\begin{aligned}
\Uk&:=\On(0,-1)^{\oplus k_1}\\
\Vk&:=\On(1,-1)^{\oplus k_2} \oplus \On^{\oplus k_4}\\
\Wk&:=\On(1,0)^{\oplus k_3}.
\end{aligned}
\right.
\label{eqUVWk}
\end{equation}
We shall write $\Ui$ in place of $\left.\Uk\right|_{\li}$, etc.
\item We introduce   the vector space
\begin{equation*}
\Vek:=\Hom\left(\Uk,\Vk\right)\oplus\Hom\left(\Vk,\Wk\right)\,,
\end{equation*}
whose elements will be denoted by $(\alpha,\beta)$.
\item
Let $\overline{L}_{\vec{k}}$ be the affine subvariety of  $\Vek$ cut by the equation $\beta\circ\alpha=0$. One has
 the associated complex
\begin{equation*}
\xymatrix{
M(\alpha,\beta): & \Uk \ar[r]^-\alpha & \Vk \ar[r]^-\beta & \Wk\,.
}
\end{equation*}
\item We define the quasi-affine variety $\Lk$ as the open subset of $\overline{L}_{\vec{k}}$ characterized by the following five conditions:
\begin{itemize}
\item[(c1)]
the sheaf morphism $\alpha$ is a monomorphism;

\item[(c2)]
the sheaf morphism $\beta$ is an epimorphism; 

\item[(c3)]
the vector space morphisms $\alpha\otimes k(y)$ have maximal rank for all closed points $y\in\li$;

\item[(c4)]
if we consider the display associated with the monad $M(\alpha,\beta)$ as in eq.~\eqref{eq44}, and we restrict it to $\li$, after  twisting by $\Ol_{\li}(-1)$ and taking cohomology, we get a vector space morphism $\Phi:=H^0\left(\nu|_{\li}(-1)\right):H^0\left(\A|_{\li}(-1)\right)\longrightarrow H^0\left(\Wi(-1)\right)$; note that $h^0\left(\A|_{\li}(-1)\right)= h^0\left(\Wi(-1)\right)=nk_{3}$. We require that
\begin{equation*}
\det\Phi\neq0\,.
\end{equation*}
\item[(c5)] the cohomology $\E_{\alpha,\beta}$ of the monad $M(\alpha,\beta)$ is torsion-free.
\end{itemize}
We shall prove in Lemma \ref{6lm10}  that $\Lk$ is a smooth variety.  We let  $\E_{\alpha,\beta,\infty}=\left.\E_{\alpha,\beta}\right|_{\li}$.
\item
We consider the algebraic group
\begin{equation*}
\Gk=\Aut\left(\Uk\right)\times\Aut\left(\Vk\right)\times\Aut\left(\Wk\right).
\end{equation*}
This group acts naturally on $\Lk$ according to the following formulas:
\begin{equation}
\left\{
\begin{array}{rcl}
\alpha & \mapsto & \alpha'=\psi\alpha\phi^{-1}\\
\beta & \mapsto & \beta'=\chi\beta\psi^{-1}\\
\end{array}
\right.\qquad\bpsi=(\phi,\psi,\chi)\in\Gk\,.
\label{eqrho0}
\end{equation}
This action will be called $\rho_0:\Lk\times\Gk\to\Lk$.
\item
We introduce (see subsection \ref{pagPk}) a principal $\GL(r,\Com)$-bundle $\Pk\stackrel{\tau}{\longrightarrow}\Lk$ whose fibre over $(\alpha,\beta)$ is naturally identified with the space of framings at infinity for $\E_{\alpha,\beta}$, namely, a point $\eta\in\Pk$ is an isomorphism $\eta:\E_{\alpha,\beta,\infty}\stackrel{\sim}{\longrightarrow}\Ol_{\li}^{\oplus r}$, where $(\alpha,\beta)=\tau(\eta)$.
\item
One can lift $\rho_0$ to an action $\rho\colon\Pk\times\Gk\to\Pk$ by letting
\begin{equation}
\rho(\eta,\bar\psi) = \eta \circ \Lambda_\infty\left(\alpha,\beta;\bpsi\right)^{-1}\,,
\label{eqGonP}
\end{equation}
where $(\alpha,\beta)=\tau(\eta)$, and after letting $\left(\alpha',\beta'\right)=\bpsi\cdot(\alpha,\beta)$, the isomorphism 
\begin{equation}\label{lambda}\Lambda_\infty\left(\alpha,\beta;\bpsi\right):\E_{\alpha,\beta,\infty}\longrightarrow\E_{\alpha',\beta',\infty}
\end{equation} 
is  induced by $\bpsi:M(\alpha,\beta)\longrightarrow M\left(\alpha',\beta'\right)$.
The identity
$$\Lambda_\infty(\alpha,\beta,\bar\psi'\cdot\bar\psi) = \Lambda_\infty(\alpha',\beta',\bar\psi') \circ
\Lambda_\infty(\alpha,\beta,\bar\psi) $$
ensures that $\rho$ is indeed an action. We note that the projection $\tau\colon \Pk\to\Lk$ becomes a $\Gk$-equivariant morphism.
\end{itemize} \label{pagcond2}

We have now all ingredients needed to state our main result.
\begin{thm}[Main Theorem]
\label{teo2}
The set  $\M^n(r,a,c)$ is nonempty if and only if $$c+\frac{1}{2}na(a-1)\geq0\,.$$
 If this is the case, it can be  given the structure of a smooth algebraic variety of dimension $2rc+(r-1)na^2$ by representing it as the quotient 
 $\M^n(r,a,c)=\Pk/\Gk$. Moreover, $\M^n(r,a,c)$ turns out to be a fine moduli space of framed sheaves on $\Sigma_n$.
\end{thm}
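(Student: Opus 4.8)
The plan is to deduce the three assertions of Theorem \ref{teo2} --- the set-theoretic description, the nonemptiness criterion, and the smooth fine structure --- from the structural results prepared beforehand, treating them in turn, and I would begin with the set-theoretic identification $\Mk=\Pk/\Gk$. By Corollary \ref{pro2}, every framed torsion-free sheaf $(\E,\theta)$ with the prescribed Chern character is, up to isomorphism, the cohomology of a monad $M(\alpha,\beta)$ with $(\alpha,\beta)\in\Lk$, and, as recalled before the statement, the framing $\theta$ lifts this to a well-defined point of $\Pk/\Gk$. Conversely a point of $\Pk$ produces a framed sheaf, namely the cohomology of the associated monad together with the tautological framing. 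It then remains to check that two points of $\Pk$ yield isomorphic framed sheaves, in the sense of Definition \ref{DEf3}, precisely when they lie in the same $\Gk$-orbit: one implication is built into the definition \eqref{eqGonP} of the action $\rho$, and the reverse one uses Lemma \ref{pro3}, by which any isomorphism between the cohomologies is induced by an isomorphism of monads, i.e.\ by an element of $\Gk$. This gives the bijection $\Mk\leftrightarrow\Pk/\Gk$.

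Next comes the nonemptiness criterion, where $c+\tfrac12na(a-1)\ge0$ is nothing but $k_1\ge0$. For necessity, observe that the first term $\Uk=\On(0,-1)^{\oplus k_1}$ of the monad furnished by Corollary \ref{pro2} has rank $k_1$, an integer that occurs there as the dimension of a cohomology space attached to $\E$; hence the existence of a framed sheaf with the given invariants forces $k_1\ge0$. For sufficiency I would exhibit, for each $\vec k$ with $k_1\ge0$, an explicit framed sheaf: in the boundary case $k_1=0$ the term $\Uk$ vanishes and $\E=\ker\beta$ for a suitable epimorphism $\beta\colon\Vk\to\Wk$, while larger values of $c$ are reached by elementary modifications at points of $\Sigma_n\setminus\li$, each raising $c$ by one and preserving $r$, the class $aE$, and the trivialization at infinity.

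For the smooth structure and the dimension I would invoke Proposition \ref{6prop3} and Lemma \ref{6lm10}, by which $\Lk$ is smooth, together with the principal $\GL(r,\Com)$-bundle $\Pk\to\Lk$, so that $\Pk$ is smooth of dimension $\dim\Lk+r^2$; Theorem \ref{proMk} then endows the quotient $\Pk/\Gk$ with a smooth variety structure. Since the lifted $\Gk$-action on $\Pk$ is free --- the diagonal scalars act trivially on $\Lk$ but rescale the framing, and more generally no nontrivial automorphism of a monad fixes a framing --- the quotient has dimension $\dim\Lk+r^2-\dim\Gk$. Here $\dim\Gk=\dim\Aut(\Uk)+\dim\Aut(\Vk)+\dim\Aut(\Wk)$ and, on $\Lk$, $\dim\Lk=\dim\Hom(\Uk,\Vk)+\dim\Hom(\Vk,\Wk)-\dim\Hom(\Uk,\Wk)$, all the relevant $\Hom$-spaces being evaluated from the section counts $h^0(\On(1,-1))=n$, $h^0(\On(1,0))=n+2$, $h^0(\On(0,1))=2$, $h^0(\On(1,1))=n+4$ and the vanishing $\Hom(\On(1,-1),\On)=0$, which follow from Riemann--Roch together with Lemma \ref{lmVanH}. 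Substituting $k_2=k_1+na$, $k_3=k_1+(n-1)a$, $k_4=k_1+r-a$ then collapses the expression to $2rc+(r-1)na^2$.

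Finally, fineness. I would construct a tautological family on $\Sigma_n\times\Pk$: the universal pair $(\alpha,\beta)$ over $\Lk$ defines a relative monad with terms the pullbacks of $\Uk,\Vk,\Wk$, whose relative cohomology $\tE$ is flat over $\Pk$ by Lemma \ref{lmFlat} (conditions (c1), (c3) make $\alpha$ fibrewise injective), and the tautological framing over $\Pk$ frames it. This family is $\Gk$-equivariant, and since $\Pk\to\Mk$ is a $\Gk$-torsor over the geometric quotient it descends to a family $(\Eu,\Thu)$ on $\Sigma_n\times\Mk$. To prove universality I would run the monad construction of Corollary \ref{pro2} in families: for an arbitrary family $(\F,\Theta_\F)$ over a base $S$, cohomology-and-base-change --- underpinned by the flatness statements of Section 2, in particular Proposition \ref{teo1'} --- produces a relative monad over $\Sigma_n\times S$ with the same fixed terms, hence a classifying morphism $S\to\Pk$ and, after composing with the quotient map, a morphism $S\to\Mk$ along which $(\Eu,\Thu)$ pulls back to $(\F,\Theta_\F)$; the relative form of Lemma \ref{pro3} gives uniqueness. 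This represents the moduli functor and yields fineness. The hard part will be exactly this last step: realizing the monad construction in families (the relative spectral sequence together with base change and the Section 2 flatness results) and carrying out the equivariant descent of the tautological family; the freeness of the $\Gk$-action on $\Pk$, needed both for the dimension count and for the torsor property behind the descent, is the pivotal technical point.
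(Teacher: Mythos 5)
Your architecture coincides with the paper's at every stage: the set-theoretic identification $\Mk=\Pk/\Gk$ via Corollary \ref{pro2}, Lemma \ref{pro3} and the lifted action \eqref{eqGonP}; smoothness via Proposition \ref{6prop3}, Lemma \ref{6lm10} and Theorem \ref{proMk} (the freeness you argue informally is Corollary \ref{cor1}, already built into the proof of Theorem \ref{proMk}); the dimension count $\dim\Lk+r^2-\dim\Gk$ with the same $h^0$ data; and fineness by descending the $\Gk$-equivariant family $(\tE,\tTheta)$ along the torsor $\Pk\to\Mk$ and classifying an arbitrary family through its relative monad --- this is exactly the paper's $\Eu=\left(\q_*\tE\right)^G$, Propositions \ref{proEu}, \ref{ProUnivMon} and \ref{proFinMk}. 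The one genuine gap is in the sufficiency half of the nonemptiness criterion. Your base case at $k_1=0$ rests on the unproved assertion that a ``suitable epimorphism'' $\beta\colon\Vk\to\Wk$ satisfying (c2)--(c4) exists; this is precisely the kind of existence statement the theorem is supposed to underwrite, and it cannot be waved through, since the theorem itself shows the moduli space can be empty even with positive expected dimension (when $a\neq0$), so existence at the boundary value is genuinely at stake. The repair is easy and monad-free: $\On(E)^{\oplus a}\oplus\On^{\oplus(r-a)}$ is trivial on $\li$ (because $E\cdot H=0$) and has Chern character $(r,aE,-\frac12 na)$, i.e.\ exactly $k_1=0$; your elementary modifications at points off $\li$ then realize every $c$ with $k_1\geq0$.

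For comparison, the paper instead exhibits $\I_{c\cdot x}\oplus\On(aE)\oplus\On^{\oplus(r-2)}$ directly, with $\I_{c\cdot x}$ an ideal sheaf of length $c$ --- a one-line construction, but one that presupposes $c\geq0$, whereas for $a\geq2$ the criterion $k_1\geq0$ admits $-\frac12na(a-1)\leq c<0$ (e.g.\ $\On(E)^{\oplus 2}\oplus\On^{\oplus(r-2)}$ has $c=-n$); so your induction-from-the-boundary route, once the base case is supplied as above, is actually the more robust way to cover the full range. One further, minor imprecision: in the fineness argument, ``hence a classifying morphism $S\to\Pk$'' is too strong. Corollary \ref{corLocOnS} trivializes the monad terms only over an affine cover of $S$ on which the relevant $R^1t_{2*}$ sheaves are trivial, so one only gets local lifts $S_a\to\Pk$, differing by $\Gk$ on overlaps; these glue only after composing with $\pu\colon\Pk\to\Mk$, which is how the paper proceeds. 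With those two corrections, your outline reproduces the paper's proof, including the mutual-inverse checks carried out there in Lemmas \ref{lmMaoMa} and \ref{lm6}.
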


Note that 
$$ \dim \M^n(r,a,c) = 2rc+(r-1)na^2 = 2r\Delta,$$
where $\Delta = c_2 - \frac{r-1}{2r}c_1^2$ is the discriminant of the sheaves parametrized by $ \M^n(r,a,c)$.

\begin{cor} The moduli space $\Mk$ is irreducible.
\end{cor}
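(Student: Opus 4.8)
The plan is to reduce the irreducibility of $\Mk$ to that of the quasi-affine variety $\Lk$, and then to prove the latter by exhibiting $\Lk$ as an open subvariety of the total space of a vector bundle over an irreducible base. Throughout I assume $k_1=c+\tfrac12 na(a-1)\geq 0$, so that by Theorem \ref{teo2} all the spaces involved are nonempty (the empty case being trivial). The first step uses that forming images and locally trivial bundles preserves irreducibility: the projection $\tau\colon\Pk\to\Lk$ is a principal $\GL(r,\Com)$-bundle, it is locally trivial, and $\GL(r,\Com)$ is irreducible, so if $\Lk$ is irreducible then so is $\Pk$ (a bundle with irreducible fibre over an irreducible base). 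Finally, Theorem \ref{teo2} presents $\Mk$ as the quotient $\Pk/\Gk$, hence as the image of $\Pk$ under the quotient morphism, and the image of an irreducible space under a morphism is irreducible. Thus everything comes down to proving that $\Lk$ is irreducible.

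For this I would use the projection $\pi\colon\Lk\to\Hom(\Vk,\Wk)$, $(\alpha,\beta)\mapsto\beta$. By condition (c2) its image consists of sheaf epimorphisms, for which $\mathcal B_\beta:=\ker\beta$ is locally free, and the equation $\beta\circ\alpha=0$ forces $\alpha$ to factor through $\mathcal B_\beta$; hence the fibre $\pi^{-1}(\beta)$ is the locus in the vector space $\Hom(\Uk,\mathcal B_\beta)\simeq H^0(\mathcal B_\beta(0,1))^{\oplus k_1}$ cut out by the open conditions (c1), (c3), (c4), (c5). In particular every nonempty fibre is an open subvariety of an affine space, hence irreducible. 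To globalise, let $B'\subseteq\Hom(\Vk,\Wk)$ be the open set of those $\beta$ that are epimorphisms and for which $h^0(\mathcal B_\beta(0,1))$ attains its minimal (generic) value; over $B'$ the relative kernel of the universal map is locally free and, by cohomology and base change over the reduced base $B'$, the spaces $\Hom(\Uk,\mathcal B_\beta)$ assemble into a vector bundle $\mathcal H\to B'$. Since $B'$ is open in a vector space it is irreducible, and so is the total space of $\mathcal H$. Granting that $\pi(\Lk)\subseteq B'$, the variety $\Lk$ is then an open subvariety of $\mathcal H$, and therefore irreducible.

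The main obstacle is precisely the inclusion $\pi(\Lk)\subseteq B'$, that is, the constancy of $h^0(\mathcal B_\beta(0,1))$ along $\Lk$, which I would establish by showing that the jumping cohomology vanishes. From the top row $0\to\Uk\to\mathcal B_\beta\to\E_{\alpha,\beta}\to 0$ of the display \eqref{eq44}, twisted by $\On(0,1)$, together with $\Uk(0,1)\simeq\On^{\oplus k_1}$ and the vanishings $H^1(\On)=H^2(\On)=0$ from Lemma \ref{lmVanH}, one obtains $H^1(\mathcal B_\beta(0,1))\simeq H^1(\E_{\alpha,\beta}(0,1))$, while the defining sequence $0\to\mathcal B_\beta\to\Vk\to\Wk\to 0$ and Lemma \ref{lmVanH} give $H^2(\mathcal B_\beta(0,1))=0$. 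Since $\chi(\mathcal B_\beta(0,1))$ is a topological constant, $h^0(\mathcal B_\beta(0,1))$ is then constant along $\Lk$ as soon as $H^1(\E_{\alpha,\beta}(0,1))=0$. This last vanishing holds for the cohomology $\E_{\alpha,\beta}$ of every monad parametrised by $\Lk$, being part of the same cohomological input that produces the monad \eqref{eqI1} in Corollary \ref{pro2}; invoking it places $\pi(\Lk)$ inside $B'$ and completes the argument. The only remaining points are routine: the local freeness of the relevant $\operatorname{Hom}$-sheaf over $B'$ (cohomology and base change), and the openness of the conditions (c1)–(c5), which is already built into the construction of $\Lk$.
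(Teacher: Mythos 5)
Your two reductions are sound: since $\tau\colon\Pk\to\Lk$ is a Zariski-locally trivial principal $\GL(r,\Com)$-bundle with irreducible structure group, and $\pu\colon\Pk\to\Mk$ is a surjective morphism (Theorem \ref{proMk}), irreducibility of $\Lk$ would indeed give irreducibility of $\Mk$. The genuine gap is the pivotal vanishing claim $H^1\left(\E_{\alpha,\beta}(0,1)\right)=0$ on $\Lk$, which is false, and which is \emph{not} ``part of the cohomological input'' of the monad construction: Proposition \ref{teo1} and Corollary \ref{pro2} use only the $H^0$- and $H^2$-vanishings of Lemma \ref{lm3} (valid for $np+q\leq-1$, resp.\ $np+q\geq-(n+1)$), while the first cohomology groups of the relevant twists are exactly the \emph{nonzero} spaces, of dimensions $k_1,\dots,k_4$, out of which the monad is built; for the twist $(0,1)$ one has $np+q=1$, so Lemma \ref{lm3} gives $H^2=0$ and says nothing about $H^1$. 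Concretely, take $r=1$, $a=0$, $c=3$ and $\E=\I_Z$ with $Z\subset\Sigma_n\setminus\li$ of length $3$. From
\begin{equation}
0\longrightarrow\I_Z(0,1)\longrightarrow\On(0,1)\longrightarrow\Ol_Z\longrightarrow0\,,
\end{equation}
together with $h^0(\On(0,1))=2$ and $h^1(\On(0,1))=0$ (Lemma \ref{lmVanH}), one gets $h^1(\I_Z(0,1))=3-\rk(\ev)$; since every nonzero section of $\On(0,1)$ vanishes on an entire fibre of the ruling, this equals $1$ when the three points lie on three distinct fibres and $2$ when they lie on a single fibre. Both sheaves are torsion-free, trivial at infinity, and have the same invariants, so both arise from points of $\Lk$ (Corollary \ref{pro2} and Proposition \ref{6prop3}). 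By your own (correct) computation $h^0(\mathcal{B}_\beta(0,1))=\chi(\mathcal{B}_\beta(0,1))+h^1(\E_{\alpha,\beta}(0,1))$, so this dimension jumps along $\pi(\Lk)$: the inclusion $\pi(\Lk)\subseteq B'$ fails, the spaces $\Hom(\Uk,\mathcal{B}_\beta)$ do not assemble into a vector bundle over $\pi(\Lk)$, and $\Lk$ is not an open subvariety of $\tot(\mathcal{H})$. Nor can you simply shrink to $B'$: you would discard the jump locus of $\Lk$, and irreducible fibres over an irreducible base do not by themselves force irreducibility of the total space --- which was precisely what the bundle structure was supposed to supply.

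For comparison, the paper's proof is one line and of a completely different nature: connectedness of $\Mk$ is imported from the instanton-counting computation of its Poincar\'e polynomial in \cite{BPT}, and a connected smooth variety (Theorem \ref{proMk}) is irreducible. Note that since $\Lk$ is smooth (Lemma \ref{6lm10}), your strategy is in effect equivalent to proving connectedness of $\Lk$, i.e.\ to reproving internally what the paper cites externally; rescuing it would require a new ingredient --- for instance showing that the locus where $h^1(\E_{\alpha,\beta}(0,1))$ jumps has positive codimension and does not disconnect $\Lk$ --- and nothing of that sort follows from the lemmas available in the paper.
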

\begin{proof} One knows by instanton counting that $\Mk$ is connected \cite{BPT}, hence, being smooth, it is irreducible as well.
\end{proof}

\medskip
\section{Families of framed sheaves}
In this section we   explain how the varieties $\Lk$ and $\Pk$ arise and construct a canonical family $\left(\tE,\tTheta\right)$ on the product $\Sigma_n\times\Pk$.

For any scheme $S$, let $T=\Sigma_n\times S$ and let $t_i$, $i=1,2$ be the projections onto the first and the second factor, respectively. Analogously, we introduce the product scheme $T_\infty=\li\times S$, together with the projections $u_i$, $i=1,2$.

\begin{defin}
Let $\vec{k}=(n,r,a,c)\in\Z^4$ with  $n\geq1$, $r\geq1$ and $0\leq a \leq r-1$. A coherent sheaf $\Fa$ on $T$    fulfills condition $\vec{k}$ if and only if it is flat on $S$ and for all  closed points $s\in S$
\begin{itemize}
 \item 
the restricted sheaf $\Fa_s$ is torsion-free and trivial at infinity on $T_s\simeq\Sigma_n$;
\item
the Chern character of $\Fa_s$ is $(r,aE,-c-\frac{1}{2}na^2)$.
\end{itemize}
\end{defin}
\begin{defin}
\label{defFam}
Given a vector $\vec{k}$, and a scheme $S$, an  $S$-family of framed sheaves on $\Sigma_n$  
is a pair $(\Fa,\Theta)$, where:
\begin{enumerate}
 \item 
$\Fa$ is a sheaf on $T=\Sigma_n\times S$  fullfilling condition $\vec{k}$;
\item
$\Theta$ is an isomorphism $\Fa|_{T_\infty}\to  \mathcal O_{T_\infty}^{\oplus r}$.
\end{enumerate}
Two $S$-families of framed sheaves on $\Sigma_n$ are isomorphic if they are isomorphic as framed sheaves on $T$ (cf.\ Definition \ref{DEf3}).
\end{defin}

For any sheaf $\Ga$ of $\Ol_T$-modules we let
\begin{equation*}
\Ga(p,q)=\Ga\otimes t_1^*\On(p,q)\qquad\text{for all $(p,q)\in\Z$}.
\end{equation*}
\begin{prop}
\label{teo1}
A sheaf  $\Fa$ on $T$ that  satisfies condition $\vec{k}$ is isomorphic to the cohomology of a  monad $M(\Fa)$ on $T$ 
\begin{equation}
M(\Fa):\qquad
\xymatrix{
0 \ar[r] & \Ua \ar[r]^A & \Va \ar[r]^B & \Wa \ar[r] & 0
}\,,
\label{eqMS}
\end{equation}
where the locally-free sheaves $\Ua$ and $\Wa$ are
\begin{equation*}
\begin{cases}
\begin{aligned}
\Ua&=\Ol_T(0,-1)\otimes t^{\ast}_2R^1t_{2*}\left[\Fa(-2,n-1)\right]\\
\Wa&=\Ol_T(1,0)\otimes t^{\ast}_2R^1t_{2*}\left[\Fa(-1,0)\right].
\end{aligned}
\end{cases}
\end{equation*}
The locally-free sheaf $\Va$ is defined as an extension
\begin{gather}
\xymatrix{
0 \ar[r] & \Va_- \ar[r]^{\ca} & \Va \ar[r]^{\da} & \Va_+ \ar[r] & 0\,,
}
\label{eqES}
\\
\text{where}\qquad
\begin{cases}
\begin{aligned}
\Va_+&:=t^{\ast}_2R^1t_{2*}\left[\Fa(-2,n)\right]\\
\Va_-&:=\Ol_T(1,-1)\otimes t^{\ast}_2R^1t_{2*}\left[\Fa(-1,-1)\right]
\end{aligned}
\end{cases}
\notag
\end{gather}
and the morphisms $\ca$ and $\da$ are  determined by $\Fa$.

\end{prop}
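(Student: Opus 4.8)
The plan is to realize $M(\Fa)$ as a \emph{relative Beilinson monad}, obtained by applying a fixed locally free resolution of the diagonal $\Delta\subset\Sigma_n\times\Sigma_n$ to $\Fa$ along the surface direction. The starting point is the length-two resolution
\begin{equation*}
0\to p_1^*\On(0,-1)\otimes p_2^*\On(-2,n-1)\to\mathcal{C}\to p_1^*\On(1,0)\otimes p_2^*\On(-1,0)\to\Ol_\Delta\to0
\end{equation*}
on $\Sigma_n\times\Sigma_n$ (with $p_1,p_2$ the projections), whose rank-two middle term $\mathcal{C}$ is itself the non-split extension
\begin{equation*}
0\to p_1^*\On(1,-1)\otimes p_2^*\On(-1,-1)\to\mathcal{C}\to p_1^*\On\otimes p_2^*\On(-2,n)\to0;
\end{equation*}
this is the surface analogue of Beilinson's resolution and can be extracted from the full strong exceptional collection of line bundles on $\Sigma_n$ used by Buchdahl \cite{Bu}. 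On $\Sigma_n\times\Sigma_n\times S$ I would pull this back, tensor with the pullback of $\Fa$ from the factor carrying it, and push forward to the remaining copy $\Sigma_n\times S=T$; since that push-forward sends $\Ol_\Delta\otimes(\text{pullback of }\Fa)$ back to $\Fa$, substituting the resolution produces a hyper-direct-image spectral sequence
\begin{equation*}
E_1^{-i,q}=t_1^*A_i\otimes t_2^*R^q t_{2*}\bigl(\Fa\otimes t_1^*B_i\bigr)\ \Longrightarrow\ \Fa\ (\text{in total degree }0),
\end{equation*}
where $(A_i,B_i)$, $i=0,1,2$, runs through the three line-bundle data above (the middle column splitting into the two summands coming from $\mathcal{C}$).

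The heart of the argument is a vanishing that concentrates $E_1$ in the single row $q=1$. For each of the four twists $B\in\{(-2,n-1),(-1,-1),(-2,n),(-1,0)\}$ I would show $H^0(\Fa_s\otimes B)=H^2(\Fa_s\otimes B)=0$ for every closed $s\in S$: the vanishing of $H^2(\Fa_s\otimes B)$ follows from Serre duality on $\Sigma_n$ ($K_{\Sigma_n}=\On(-2,n-2)$), which rewrites it as an $H^0$ of the locally free sheaf $(\Fa_s)^\vee$ twisted by $K_{\Sigma_n}\otimes B^{-1}$, while that of $H^0$ follows from the triviality of $\Fa_s$ at infinity together with Lemma \ref{lmVanH}, the chosen twists being negative along $H$. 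Since $\Fa$ is flat over $S$, cohomology-and-base-change then makes each $R^1t_{2*}(\Fa\otimes t_1^*B)$ locally free, compatible with restriction to the fibres $T_s$, and of the constant rank forced by Riemann--Roch, namely $k_1,k_2,k_4,k_3$ respectively. Hence $E_1$ degenerates to the three-term row $E_1^{-2,1}\to E_1^{-1,1}\to E_1^{0,1}$, and the convergence to $\Fa$ in total degree $0$ forces its cohomology to be $\Fa$ in the middle and zero at the two ends.

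Reading off the surviving terms gives $\Ua=\Ol_T(0,-1)\otimes t_2^*R^1t_{2*}[\Fa(-2,n-1)]$ and $\Wa=\Ol_T(1,0)\otimes t_2^*R^1t_{2*}[\Fa(-1,0)]$ as the outer entries, while the middle entry $\Va=E_1^{-1,1}$ is the image under $R^1t_{2*}$ of the extension defining $\mathcal{C}$; because the $R^0$- and $R^2$-terms of its two pieces vanish by the same computation, the long exact sequence of $R^\bullet t_{2*}$ collapses to the short exact sequence \eqref{eqES}, exhibiting $\Va$ as the asserted extension of $\Va_+$ by $\Va_-$, and in particular as locally free. The maps $A,B$ of \eqref{eqMS} and the extension maps $\ca,\da$ are those induced by the differentials of the fixed resolution, hence canonical. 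That $A$ is a monomorphism and $B$ an epimorphism holds fibrewise by the end-exactness just established, hence globally by Lemma \ref{lmInjSurj}; Lemma \ref{lmFlat} then shows $\E(M(\Fa))$ is flat over $S$, and base change identifies it fibrewise with $\Fa$, whence $\E(M(\Fa))\simeq\Fa$. Finally, functoriality is immediate, since pullback, tensoring by fixed line bundles, $R^\bullet t_{2*}$, and the maps induced by the fixed resolution are all functorial in $\Fa$: a morphism $\Fa\to\Fa'$ induces a compatible morphism of spectral sequences, and hence of monads.

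I expect the fibrewise vanishing $H^0(\Fa_s\otimes B)=H^2(\Fa_s\otimes B)=0$ for all four twists simultaneously to be the main obstacle: it is precisely here that the exact choice of the four line bundles, the triviality at infinity, and the normalization $0\le a\le r-1$ of the Chern character must all be used, and everything downstream---local freeness of the $R^1t_{2*}$, the collapse of the spectral sequence, and the very existence of the monad---rests on it. A secondary subtlety is verifying that $R^1t_{2*}$ turns the defining extension of $\mathcal{C}$ into a short (not merely long) exact sequence, which again hinges on the same vanishing.
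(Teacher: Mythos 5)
Your proposal is correct and follows essentially the same route as the paper's proof: Buchdahl's length-two resolution of the diagonal in $\Sigma_n\times\Sigma_n$ (your resolution is the paper's $\G^\bullet$ twisted by $p_1^*\On(1,0)\otimes p_2^*\On(-1,0)$, which merely absorbs the paper's tensoring with $\Fa(-1,0)$ and final twist by $\Ol_T(1,0)$), the two hyperdirect-image spectral sequences along $p_{13}$, and the fibrewise vanishing $H^0=H^2=0$ for the four twists, which is exactly the content of the paper's Lemma \ref{lm3} --- note that this vanishing needs only torsion-freeness and triviality at infinity, not the normalization $0\le a\le r-1$ as you conjecture. The collapse of $E_1$ to the row $q=1$, the short exactness of the sequence \eqref{eqES} defining $\Va$, and the functoriality are handled just as in the paper.
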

To prove this Proposition, one needs the following result.
\begin{lemma}\label{lm3}
Let $\mathcal{E}$ be a torsion-free sheaf on $\Sigma_n$, trivial at infinity. One has
\begin{align*}
 H^0(\mathcal{E}(p,q))=0\qquad&\text{\emph{for}}\qquad np+q\leq -1\,,
\\
 H^2(\mathcal{E}(p,q))=0\qquad&\text{\emph{for}}\qquad np+q\geq -(n+1)\,.
\end{align*}
\end{lemma}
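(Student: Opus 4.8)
The plan is to prove both vanishing statements by combining the triviality of $\E$ at infinity with the cohomology table for $\On$ in Lemma~\ref{lmVanH}. The key structural tool is the standard restriction exact sequence associated with the divisor $\li$. Since $\li$ is linearly equivalent to $H$, one has the short exact sequence of sheaves
\begin{equation*}
\xymatrix{
0 \ar[r] & \Ol_{\Sigma_n}(-H) \ar[r] & \Ol_{\Sigma_n} \ar[r] & \Ol_{\li} \ar[r] & 0\,.
}
\end{equation*}
Tensoring with the torsion-free sheaf $\E(p,q)$ and using that $\E$ is locally free in a neighbourhood of $\li$ (so the restriction to $\li$ is exact there), I would obtain
\begin{equation*}
\xymatrix{
0 \ar[r] & \E(p-1,q) \ar[r] & \E(p,q) \ar[r] & \E|_{\li}(p,q) \ar[r] & 0\,,
}
\end{equation*}
where I have written $\E(p-1,q)=\E(p,q)\otimes\On(-H)$ using $H\sim pH+qF-(p-1)H-qF$. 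Because $\E|_{\li}\simeq\Ol_{\li}^{\oplus r}$ and $\li\simeq\Pu$, the restricted term is $\Ol_{\Pu}(?)^{\oplus r}$ for a degree determined by $(p,q)$, whose cohomology is completely explicit.

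First I would treat $H^0$. Taking the long exact sequence in cohomology of the restriction sequence gives an injection $H^0(\E(p-1,q))\hookrightarrow H^0(\E(p,q))$, so the $H^0$ groups form an increasing chain as $p$ grows. To run an induction downward in $p$, I would instead iterate the sequence so as to compare $\E(p,q)$ with a twist having very negative $p$, where $H^0$ vanishes for elementary reasons (a torsion-free sheaf with sufficiently negative twist has no sections). The induction step controls the correction term $H^0(\E|_{\li}(p,q))=H^0(\Ol_{\Pu}(d))^{\oplus r}$, which is nonzero only when the relevant degree $d\geq0$; the hypothesis $np+q\leq-1$ is exactly the numerical condition forcing $d<0$ along the whole chain, so every correction term vanishes and $H^0(\E(p,q))=0$ follows. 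The analogous argument for $H^2$ proceeds by ascending induction in $p$: the restriction sequence gives a surjection $H^2(\E(p-1,q))\twoheadrightarrow H^2(\E(p,q))$ (the next term $H^2(\E|_{\li}(p,q))$ vanishes since $\li$ is a curve), so it suffices to show $H^2$ vanishes for $p$ large, which again follows from Serre duality and positivity. The condition $np+q\geq-(n+1)$ is the numerical threshold ensuring the induction reaches the vanishing range.

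I expect the main obstacle to be bookkeeping the exact degree of the restricted line bundle on $\li\simeq\Pu$ as a function of $(p,q)$, and verifying that the numerical conditions $np+q\leq-1$ and $np+q\geq-(n+1)$ propagate correctly under the twist by $\On(-H)$ at each induction step. In particular one must check that along the entire inductive chain the intersection number $(pH+qF)\cdot F = p$ and $(pH+qF)\cdot H = np+q$ interact with the restriction degree so that no correction term $H^i(\Ol_{\li}(\cdot))$ re-enters the vanishing range prematurely. The torsion-freeness of $\E$ (rather than local freeness) is used only to guarantee that the restriction sequence behaves well and that very negative twists kill $H^0$; since everything near $\li$ is locally free by triviality at infinity, the restriction to $\li$ causes no $\lTor$ complications. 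Once the degree computation is pinned down, both inductions are routine applications of the long exact cohomology sequence together with the explicit cohomology of line bundles on $\Pu$.
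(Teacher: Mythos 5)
Your $H^{0}$ half is essentially correct, and is in substance the induction of King that the paper's proof invokes (the paper itself only cites \cite[p.~24]{Ki} for the locally free case and handles the torsion-free case via the injection $\E\rightarrowtail\E^{**}$). Two small remarks there: the exactness of $0\to\E(p-1,q)\to\E(p,q)\to\Ol_{\li}(np+q)^{\oplus r}\to 0$ needs no local freeness near $\li$ at all --- the first map is multiplication by the section cutting $\li$, which is injective precisely because $\E$ is torsion-free --- and the degree is indeed $(pH+qF)\cdot H=np+q$, which stays $\leq-1$ as $p$ decreases, so $H^{0}(\E(p,q))\simeq H^{0}(\E(p-m,q))$ for all $m$, and this vanishes for $m\gg0$ by boundedness of slopes of rank-one subsheaves (your ``elementary reasons'').

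The $H^{2}$ half as written is logically inverted and would fail. The surjection $H^{2}(\E(p-1,q))\twoheadrightarrow H^{2}(\E(p,q))$ (coming from the vanishing of $H^{2}$ on the curve $\li$) propagates vanishing from \emph{smaller} $p$ to \emph{larger} $p$; combining it with a base case at $p\gg0$, as you propose, yields nothing, while a base case at $p\ll0$ is false: already for $\E=\On$, Lemma \ref{lmVanH} gives $H^{2}(\On(p,q))\neq0$ whenever $p\leq-2$ and $np+q\leq-(n+2)$, which happens for every fixed $q$ once $p$ is negative enough. The correct mechanism is the \emph{preceding} segment of the long exact sequence, $H^{1}(\Ol_{\li}(np+q))^{\oplus r}\to H^{2}(\E(p-1,q))\to H^{2}(\E(p,q))\to0$: a descending induction from $p\gg0$ (where $H^{2}$ does vanish) goes through as long as $H^{1}(\Ol_{\Pu}(np'+q))=0$, i.e.\ $np'+q\geq-1$, at every intermediate step $p'\geq p+1$, and the worst step $p'=p+1$ gives exactly the threshold $np+q\geq-(n+1)$ --- this is the correction term your final paragraph gestures at but your argument never uses. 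Alternatively, and closer to the paper's route: since $\E^{**}/\E$ is supported on points, $H^{2}(\E(p,q))\hookrightarrow H^{2}(\E^{**}(p,q))$, and Serre duality with $\omega_{\Sigma_n}=\On(-2,n-2)$ identifies the latter with $H^{0}\bigl((\E^{**})^{\vee}(-p-2,\,n-2-q)\bigr)^{\vee}$, where $(\E^{**})^{\vee}$ is locally free and trivial at infinity; the condition $n(-p-2)+(n-2-q)\leq-1$ is equivalent to $np+q\geq-(n+1)$, so the second vanishing is literally the first applied to the dual sheaf. Either repair closes the gap; the proof you wrote --- ascending induction, base at large $p$, correction term $H^{2}(\E|_{\li})$ --- does not.
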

\begin{proof} When $\mathcal{E}$ is locally free, the proof is essentially the same as in \cite[p.~24]{Ki}. Otherwise we get the thesis by using the injection $\E \rightarrowtail \E^{**}$.
\end{proof}
\begin{proof}[Proof of Proposition \ref{teo1}]
Let us consider the product scheme 
$\Sigma_n\times\Sigma_n\times S$, together with the canonical projections $p_{12}$, $p_{13}$ and $p_{23}$. Buchdahl \cite{Bu} proved the existence of a three-term locally-free resolution $\G^\bullet \twoheadrightarrow \Ol_\Delta$ 
of the structure sheaf $\Ol_{\Delta}$ of the diagonal $\Delta\subseteq \Sigma_n\times\Sigma_n$.  This is given by
$$ \left\{\begin{array}{rcl} \G^0 &=& \Ol_{\Sigma_n\times\Sigma_n} \\[3pt]
\G^{-1} &=& \mathcal R^\ast \\[3pt]
\G^{-2} & = & \Ol_{\Sigma_n}(-1,-1) \boxtimes \Ol_{\Sigma_n}(-1,n-1)
\end{array}\right.\ ;$$
here $\mathcal R$ is the extension
$$ 0 \to \Ol_{\Sigma_n}(1, 0) \boxtimes \Ol_{\Sigma_n}( 1,-n ) \to \mathcal R \to \Ol_{\Sigma_n}(0,1) \boxtimes \Ol_{\Sigma_n}(0,1) \to 0 $$
corresponding to the image in $H^1( \Sigma_n\times\Sigma_n,  \Ol_{\Sigma_n}(1,-1) \boxtimes \Ol_{\Sigma_n}(1,-n-1) )$ of $t$ under the connecting homomorphism of the exact sequence
\begin{multline*} 0 \to  \Ol_{\Sigma_n}(1,-1) \boxtimes \Ol_{\Sigma_n}(1,-n-1) ) \to \Ol_{\Sigma_n}(1,0) \boxtimes \Ol_{\Sigma_n}(1,-n) \\
\to \Ol_{\Sigma_n}(1,0) \boxtimes \Ol_{\Sigma_n}(1,-n) \otimes \Ol_Y\to 0\,. $$
\end{multline*} 
Here $Y$ is a suitable divisor in  $\Sigma_n\times\Sigma_n$, while $t$ is a   section of 
$ \Ol_{\Sigma_n}(0,1) \boxtimes \Ol_{\Sigma_n}(0,-1) \otimes \Ol_Y$ whose zero locus is $\Delta$.

Given any sheaf $\Fa$ on $T$ fullfilling condition $\vec{k}$, we introduce the complex
\begin{equation*}
\C^\bullet=\xymatrix{
\mathcal{C}^{-2} \ar[r] & \mathcal{C}^{-1} \ar[r] & \mathcal{C}^{0}
}=\left(p_{12}^*\G^\bullet\right)\otimes\left[p_{23}^{\ast}(\Fa(-1,0))\right]\,.
\end{equation*}
There are two spectral sequences, both abutting to the hyperdirect image $\mathbb{R}^\bullet p_{13\ast}(\mathcal{C}^{\bullet})$. From the first spectral sequence one gets
\begin{equation}
\mathbb{R}^ip_{13\ast}(\mathcal{C}^{\bullet})=
\begin{cases}
\Fa(-1,0)\qquad&\text{if}\;i=0\\
0\qquad&\text{otherwise}\,.
\end{cases}
\end{equation}
By using Lemma \ref{lm3}, one can obtain from the second exact sequence a complex that, when twisted by $\Ol_T(1,0)$, yields $M(\Fa)$.
\end{proof}
This proof implies that the sheaves $R^1t_{2*}\left(\Fa(p,q)\right)$ are locally free for $$(p,q)\in\Il=\{(-2,n-1),(-1,0),(-2,n),(-1,-1)\}\,.$$

\subsection{The variety $\Lk$}
As a straightforward consequence of Proposition \ref{teo1} we get the following result.

\begin{cor}
\label{corLocOnS}
Let the sheaf $\Fa$ and the monad $M(\Fa)$ be as in Proposition \ref{teo1}. Assume that $S$ is affine, and that the sheaves $R^1t_{2*}\left(\Fa(p,q)\right)$ are trivial for $(p,q)\in\Il$. There are isomorphisms
\begin{equation*}
\Ua\simeq t_1^*\Uk\,;\qquad
\Va\simeq t_1^*\Vk\,;\qquad
\Wa\simeq t_1^*\Wk\,,
\end{equation*}
the sheaves $\Uk$, $\Vk$ and $\Wk$ being defined as in eq.~\eqref{eqUVWk}.
\end{cor}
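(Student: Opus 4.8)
The plan is to reduce the statement to a rank computation together with a splitting argument. Since $S$ is affine and the sheaves $R^1t_{2*}(\Fa(p,q))$ are assumed trivial for $(p,q)\in\Il$ — and are locally free by the remark following Proposition \ref{teo1} — each of them is a free $\Ol_S$-module $\Ol_S^{\oplus m_{p,q}}$. Because $\Ua$, $\Wa$, $\Va_-$ and $\Va_+$ are each obtained by tensoring the pullback $t_2^*R^1t_{2*}(\Fa(p,q))$ with a line bundle pulled back from $\Sigma_n$ along $t_1$, the whole problem comes down to identifying the four integers $m_{p,q}$ with the appropriate $k_i$, and then to recognizing $\Va$ inside the extension \eqref{eqES}.

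First I would compute the ranks. By cohomology and base change the rank of the locally-free sheaf $R^1t_{2*}(\Fa(p,q))$ equals $h^1(\Fa_s(p,q))$ for any closed $s\in S$; writing $\E=\Fa_s$, this is $h^1(\E(p,q))$ for a torsion-free sheaf $\E$ trivial at infinity with Chern character $(r,aE,-c-\frac12na^2)$. For each $(p,q)\in\Il$ one has $np+q\in\{-n-1,-n,-n,-n-1\}$, so both inequalities of Lemma \ref{lm3} apply and $H^0(\E(p,q))=H^2(\E(p,q))=0$. Hence $h^1(\E(p,q))=-\chi(\E(p,q))$, a quantity depending only on the Chern character, which I would evaluate by Riemann--Roch on $\Sigma_n$ (using $E^2=-n$, $E\cdot H=0$, $E\cdot F=1$, $K=-2H+(n-2)F$ and $\chi(\On)=1$). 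The four values come out as $k_1$, $k_3$, $k_4$, $k_2$ for $(p,q)=(-2,n-1)$, $(-1,0)$, $(-2,n)$, $(-1,-1)$, respectively.

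Substituting free modules of these ranks gives at once $\Ua\simeq\Ol_T(0,-1)^{\oplus k_1}=t_1^*\Uk$ and $\Wa\simeq\Ol_T(1,0)^{\oplus k_3}=t_1^*\Wk$, together with $\Va_-\simeq t_1^*(\On(1,-1)^{\oplus k_2})$ and $\Va_+\simeq\Ol_T^{\oplus k_4}=t_1^*(\On^{\oplus k_4})$. It then remains to identify $\Va$, which by \eqref{eqES} is only an extension of $\Va_+$ by $\Va_-$. The point is to show this extension splits. Since both terms are locally free, its class lies in
\begin{equation*}
\Ext^1_{\Ol_T}(\Va_+,\Va_-)\simeq H^1\!\left(T,\Ol_T(1,-1)\right)^{\oplus k_2k_4},
\end{equation*}
and because $\Ol_T(1,-1)=t_1^*\On(1,-1)$ with $S$ affine, flat base change along $t_2$ (the Leray contributions $H^{>0}(S,-)$ vanishing) gives $H^1(T,\Ol_T(1,-1))\simeq H^1(\Sigma_n,\On(1,-1))\otimes_\Com\Ol_S(S)$. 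By Lemma \ref{lmVanH} the group $H^1(\Sigma_n,\On(1,-1))$ vanishes, so the extension class is zero and $\Va\simeq\Va_-\oplus\Va_+\simeq t_1^*\Vk$.

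I expect the splitting of \eqref{eqES} to be the step requiring real care: the rank identification is routine Riemann--Roch once Lemma \ref{lm3} has disposed of $H^0$ and $H^2$, whereas without the vanishing of $H^1(\Sigma_n,\On(1,-1))$ one could not rule out that $\Va$ is a genuinely twisted extension rather than the trivial pullback. The affineness of $S$ enters precisely here, to kill the $H^1(S,\Ol_S)$ term in the relevant $\Ext$ group.
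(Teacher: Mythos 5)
Your argument is correct and follows essentially the same route as the paper's proof: you identify the ranks of the four sheaves $R^1t_{2*}\left(\Fa(p,q)\right)$ via cohomology and base change, the vanishings of Lemma \ref{lm3}, and Riemann--Roch, and then split the extension \eqref{eqES} by observing that $\Ext^1\left(\Va_+,\Va_-\right)$ reduces, for affine $S$, to $H^0(S,\Ol_S)\otimes H^1\left(\Sigma_n,\On(1,-1)\right)=0$. The only (immaterial) discrepancies are that the paper's proof swaps the labels of $\Va_\pm$ relative to their definitions in Proposition \ref{teo1} and accordingly computes $H^1\left(T,\Ol_T(-1,1)\right)$ instead of your $H^1\left(T,\Ol_T(1,-1)\right)$, and that it attributes the final vanishing to Lemma \ref{lm3}, whereas your citation of Lemma \ref{lmVanH} is the accurate one; both twists have vanishing $H^1$, so the conclusion is unaffected either way.
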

\begin{proof}
A trivialization for $R^1t_{2*}\left(\Fa(p,q)\right)$ amounts to choosing a closed point $s_0\in S$ and an isomorphism
\begin{equation*}
R^1t_{2*}\left(\Fa(p,q)\right)\stackrel{\sim}{\longrightarrow}\Ol_S\otimes\left[R^1t_{2*}\left(\Fa(p,q)\right)\otimes k(s_0)\right].
\end{equation*}
Since $\Fa_{s_0}$ is torsion-free and trivial at infinity, from Lemma \ref{lm3} and from the Semicontinuity Theorem one obtains the  isomorphism
\begin{equation*}
R^1t_{2*}\left(\Fa(p,q)\right)\otimes k(s_0)\simeq H^1\left(\Fa_{s_0}(p,q)\right)\,.
\end{equation*}
The dimensions of the vector spaces $H^1(\Fa_{s_0}(p,q))$ can be computed by means of Riemann-Roch Theorem and Lemma \ref{lm3}:
\begin{equation*}
h^1\left(\Fa_{s_0}(p,q)\right)=
\begin{cases}
k_1 & \text{for $(p,q)=(-2,n-1)$}\\
k_2 & \text{for $(p,q)=(-1,-1)$}\\
k_3 & \text{for $(p,q)=(-1,0)$}\\
k_4 & \text{for $(p,q)=(-2,n)$}\\
\end{cases}
\end{equation*}
where $k_i$, $i=1,\ldots,4$ are as in eq.~\eqref{eqki}. As a consequence the sheaves $R^1t_{2*}\left(\Fa(-2,n-1)\right)$, $R^1t_{2*}\left(\Fa(-1,-1)\right)$, $R^1t_{2*}\left(\Fa(-1,0)\right)$ and $R^1t_{2*}\left(\Fa(-2,n)\right)$ are free of ranks $k_1,\dots,k_4$ respectively, so that 
\begin{equation*}
\Ua
\simeq
\Ol_T(0,-1)^{\oplus k_1}\,;\qquad
\Wa
\simeq
\Ol_T(1,0)^{\oplus k_3}\,;\qquad
\Va_-
\simeq
\Ol_T^{\oplus k_4}\,;\qquad
\Va_+
\simeq
\Ol_T(1,-1)^{\oplus k_2}\,.
\end{equation*}
The thesis follows for $\Ua$ and $\Wa$. By plugging $\Va_-$ and $\Va_+$ into the sequence eq.~\eqref{eqES}, the latter splits, since
\begin{align*}
\Ext^1\left(\Ol_T(1,-1),\Ol_T\right)&\simeq H^1\left(T,\Ol_T(-1,1)\right)\simeq\\
\tag{$*$} &\simeq H^0\left(S,R^1t_{2*}\Ol_T(-1,1)\right)\simeq\\
&\simeq H^0\left(S,R^1t_{2*}\left[t_1^*\On(-1,1)\right]\right)\simeq\\
&\simeq H^0(S,\Ol_S)\otimes H^1\left(\Sigma_n,\On(-1,1)\right)=0
\end{align*}
(the second isomorphism holds true as $S$ is affine and the vanishing is a consequence of Lemma \ref{lm3}). This ends the proof.
\end{proof}

The following result is the absolute case of  Proposition \ref{teo1}, obtained by letting $S=\Spec\Com$, and follows easily from Corollary \ref{corLocOnS}.
\begin{cor}
\label{pro2}
Any sheaf $\mathcal{E}$ on $\Sigma_n $ that is torsion-free and trivial at infinity is isomorphic to the cohomology of a monad $\Mek(\E)$, which is of the form   $M(\alpha,\beta)$  for a suitable $(\alpha,\beta)\in\overline{L}_{\vec{k}}$. Note that we do not require $k_1\geq0$ \emph{a priori}.
As a consequence however, if $k_1=c+\frac{1}{2}na(a-1)<0$, the set $\M^n(r,a,c)$ is empty.
\end{cor}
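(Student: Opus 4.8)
The plan is to specialize Proposition~\ref{teo1} and Corollary~\ref{corLocOnS} to the base $S=\Spec\Com$.

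First I would take $S=\Spec\Com$, so that $T=\Sigma_n\times\Spec\Com$ is canonically identified with $\Sigma_n$ and the projection $t_1$ becomes the identity. Since $\E$ is trivial at infinity one has $c_1(\E)\propto E$, so the Chern character of $\E$ is $(r,aE,-c-\frac12na^2)$ for uniquely determined $r,a,c$; after the normalization $0\le a\le r-1$ this fixes $\vec{k}=(n,r,a,c)$. Regarded as a sheaf on $T$, $\E$ automatically satisfies condition~$\vec{k}$: flatness over $S=\Spec\Com$ is automatic (every module over a field is flat), and the unique closed point of $S$ merely reproduces the hypotheses that $\E$ is torsion-free and trivial at infinity. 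Hence Proposition~\ref{teo1} applies and exhibits $\E$ as the cohomology of the canonically associated monad $\Mek(\E)$.

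Next I would verify that the hypothesis of Corollary~\ref{corLocOnS} is automatic here. Over $S=\Spec\Com$ each higher direct image $R^1t_{2*}(\E(p,q))$ is nothing but the finite-dimensional $\Com$-vector space $H^1(\E(p,q))$, and every such vector space is free, hence trivial as an $\Ol_S$-module. Corollary~\ref{corLocOnS} therefore applies verbatim and gives $\Ua\simeq\Uk$, $\Va\simeq\Vk$, $\Wa\simeq\Wk$ (the pullback $t_1^*$ being trivial over a point). Consequently $\Mek(\E)$ is a complex with precisely the terms $\Uk,\Vk,\Wk$ of \eqref{eqUVWk}, so it equals $M(\alpha,\beta)$ for some $(\alpha,\beta)\in\Vek$; as it is a complex, $\beta\circ\alpha=0$, i.e.\ $(\alpha,\beta)\in\overline{L}_{\vec{k}}$. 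This proves the first assertion.

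For the final statement I would invoke the dimension count carried out inside the proof of Corollary~\ref{corLocOnS} --- Riemann--Roch together with the vanishings of Lemma~\ref{lm3} --- which yields, for \emph{any} torsion-free sheaf trivial at infinity with the prescribed Chern character, the identity $k_1=h^1(\E(-2,n-1))=\dim_\Com H^1(\E(-2,n-1))$. The crucial point, flagged by the parenthetical remark, is that this computation never presupposes $k_1\ge0$: the term $\Ua$ of Proposition~\ref{teo1} is the intrinsically defined sheaf $\On(0,-1)\otimes H^1(\E(-2,n-1))$, of rank $h^1(\E(-2,n-1))$, with no positivity of this rank assumed. Since a dimension is non-negative, the mere existence of such an $\E$ forces $k_1\ge0$. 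Taking the contrapositive: if $k_1=c+\frac12na(a-1)<0$, then no torsion-free sheaf trivial at infinity with this Chern character exists, and a fortiori no framed sheaf does, so $\M^n(r,a,c)=\varnothing$.

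The construction itself presents no real obstacle, since all the work is already contained in Proposition~\ref{teo1} and Corollary~\ref{corLocOnS}. The one point requiring care is the logical order in the last step: rather than assuming $k_1\ge0$ from the outset (which the very definitions~\eqref{eqUVWk} seem to demand), one must run the intrinsic monad construction first, read off $k_1$ as the cohomology dimension $h^1(\E(-2,n-1))$, and only then deduce its non-negativity --- thereby turning the emptiness of $\M^n(r,a,c)$ for $k_1<0$ into a genuine consequence rather than a standing hypothesis.
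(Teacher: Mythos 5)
Your proposal is correct and follows essentially the same route as the paper, which proves Corollary \ref{pro2} in one line as the absolute case $S=\Spec\Com$ of Proposition \ref{teo1} combined with Corollary \ref{corLocOnS}. Your filled-in details --- flatness over a point being automatic, triviality of the $R^1t_{2*}$ sheaves as $\Com$-vector spaces, and reading off $k_1=h^1\left(\E(-2,n-1)\right)\geq0$ from the intrinsic construction to conclude that $\M^n(r,a,c)=\varnothing$ when $k_1<0$ --- are exactly the intended argument.
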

 
We fix a $\vec{k}$ such that $k_1\geq0$.
One should note that $\Mek(\E)\simeq\Mek(\E')$ whenever $\E\simeq\E'$.
This provides a map between the set of isomorphism classes of torsion-free sheaves on $\Sigma_n$ that are  trivial at infinity  and the set of isomorphism classes of monads of the form $M(\alpha,\beta)$. The following two results establish the injectivity of this map, and enable us to characterize its image.
\begin{lemma}
\label{pro3}
Let $(\alpha,\beta), \left(\alpha',\beta'\right)$ be any two points in $\overline{L}_{\vec{k}}$ satisfying the conditions (c1) and (c2) introduced in section~\ref{Main result} (therefore $M=M(\alpha,\beta)$ and $M'=M\left(\alpha',\beta'\right)$ are monads). Then
\begin{equation*}
M\simeq M'\qquad\text{if and only if}\qquad \mathcal{E}(M)\simeq\mathcal{E}(M').
\end{equation*}
\end{lemma}
\begin{proof} The proof of \cite[Lemma 4.1.3]{Ok} holds true also when the cohomology sheaf of the monad is not locally free.
\end{proof}
\begin{prop}
\label{6prop3}
For any point $(\alpha,\beta)\in\overline{L}_{\vec{k}}$ satisfying conditions (c1) and (c2), the cohomology $\E$ of the monad $M(\alpha,\beta)$ is trivial at infinity if and only if the morphisms $(\alpha,\beta)$ satisfy conditions (c3) and (c4).
\end{prop}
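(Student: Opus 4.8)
The plan is to restrict the entire display \eqref{eq44} of $M(\alpha,\beta)$ to $\li\simeq\Pu$ and to read everything off on $\Pu$, where every bundle splits into line bundles. Since $\Uk,\Vk,\Wk$ are locally free, tensoring the two short exact sequences $0\to\E\to\A\xrightarrow{\nu}\Wk\to0$ and $0\to\Uk\xrightarrow{\alpha}\Vk\to\A\to0$ of the display with $\Ol_{\li}$ and using $\lTor_1(\Wk,\Ol_{\li})=\lTor_1(\Vk,\Ol_{\li})=0$ yields the exact sequences
\begin{equation}
0\to\E|_{\li}\to\A|_{\li}\xrightarrow{\nu|_{\li}}\Wi\to0
\label{eqseqI}
\end{equation}
and
\begin{equation}
0\to\lTor_1(\A,\Ol_{\li})\to\Ui\xrightarrow{\alpha|_{\li}}\Vi\to\A|_{\li}\to0.
\label{eqseqII}
\end{equation}
Recording that $\On(p,q)|_{\li}\simeq\Ol_{\Pu}(np+q)$, so that $\Ui\simeq\Ol_{\Pu}(-1)^{\oplus k_1}$, $\Vi\simeq\Ol_{\Pu}(n-1)^{\oplus k_2}\oplus\Ol_{\Pu}^{\oplus k_4}$ and $\Wi\simeq\Ol_{\Pu}(n)^{\oplus k_3}$, a direct count with the integers \eqref{eqki} gives $\rk(\E|_{\li})=k_2+k_4-k_1-k_3=r$ and $\deg(\E|_{\li})=0$.

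Assume first that $\E$ is trivial at infinity, i.e.\ $\E|_{\li}\simeq\Ol_{\li}^{\oplus r}$. Then \eqref{eqseqI} realizes $\A|_{\li}$ as an extension of the locally-free $\Wi$ by $\Ol_{\li}^{\oplus r}$, hence $\A|_{\li}$ is locally free. In \eqref{eqseqII} the subsheaf $\im(\alpha|_{\li})=\ker(\Vi\to\A|_{\li})$ is then locally free of rank $k_2+k_4-(r+k_3)=k_1$ on the smooth curve $\li$, so $\alpha|_{\li}$ maps $\Ui$ onto a bundle of the same rank $k_1$; its kernel $\lTor_1(\A,\Ol_{\li})$ is therefore a torsion subsheaf of the locally-free $\Ui$, whence $\lTor_1(\A,\Ol_{\li})=0$. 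Equivalently $\alpha|_{\li}$ is fibrewise injective, which is exactly condition (c3). For (c4) I would twist \eqref{eqseqI} by $\Ol_{\li}(-1)$; since $H^0(\Ol_{\Pu}(-1))=H^1(\Ol_{\Pu}(-1))=0$, the associated long exact cohomology sequence collapses to show that $\Phi=H^0(\nu|_{\li}(-1))$ is an isomorphism, so $\det\Phi\neq0$.

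Conversely, assume (c3) and (c4). By (c3) the map $\alpha|_{\li}$ is fibrewise injective, so $\lTor_1(\A,\Ol_{\li})=0$ and \eqref{eqseqII} becomes a short exact sequence with $\A|_{\li}$ locally free; then \eqref{eqseqI} exhibits $\E|_{\li}$ as the kernel of a surjection of bundles on $\Pu$, hence locally free of rank $r$. Twisting \eqref{eqseqII} by $\Ol_{\li}(-1)$ and using $H^1(\Ol_{\Pu}(n-2))=H^1(\Ol_{\Pu}(-1))=0$ (this is where $n\geq1$ enters) gives $H^1(\A|_{\li}(-1))=0$; the cohomology sequence of the $(-1)$-twist of \eqref{eqseqI} then identifies $\ker\Phi\simeq H^0(\E|_{\li}(-1))$ and $\coker\Phi\simeq H^1(\E|_{\li}(-1))$. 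Since by (c4) $\Phi$ is an isomorphism between spaces of equal dimension $nk_3$, both $H^0(\E|_{\li}(-1))$ and $H^1(\E|_{\li}(-1))$ vanish. Writing $\E|_{\li}\simeq\bigoplus_{i=1}^{r}\Ol_{\Pu}(d_i)$ with $\sum_i d_i=\deg(\E|_{\li})=0$, these vanishings force $d_i\leq0$ and $d_i\geq0$ for every $i$, hence all $d_i=0$ and $\E|_{\li}\simeq\Ol_{\li}^{\oplus r}$.

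The main obstacle I anticipate is the careful bookkeeping of the Tor term: establishing that the restricted sequences \eqref{eqseqI}--\eqref{eqseqII} are exact in the stated form, and, in the first direction, pinning down $\lTor_1(\A,\Ol_{\li})=0$ through the rank identity $k_2+k_4-k_3=k_1+r$. The second delicate point is purely numerical, namely that $h^0(\A|_{\li}(-1))=h^0(\Wi(-1))=nk_3$, which is what makes ``$\Phi$ injective'', ``$\Phi$ surjective'' and ``$\Phi$ an isomorphism'' coincide and lets (c4) control both cohomology groups of $\E|_{\li}(-1)$ at once.
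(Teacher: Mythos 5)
Your proof is correct and follows essentially the same route as the paper's own, which compresses into a few lines exactly the steps you spell out: condition (c3) governs the local freeness of $\E|_{\li}$ via the restricted display, the twisted sequence identifies $\ker\Phi\simeq H^0\left(\E|_{\li}(-1)\right)$ and $\coker\Phi\simeq H^1\left(\E|_{\li}(-1)\right)$, and the splitting of the degree-zero bundle $\E|_{\li}$ on $\Pu$ finishes the argument. One phrase to tighten: in the forward direction, sheaf-injectivity of $\alpha|_{\li}$ (i.e.\ $\lTor_1(\A,\Ol_{\li})=0$) is not by itself ``equivalent'' to fibrewise injectivity, but your own observation that $\im\left(\alpha|_{\li}\right)$ is the kernel of a surjection onto the locally free $\A|_{\li}$, hence a subbundle of $\Vi$, supplies the missing step.
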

\begin{proof}
Condition (c3) is equivalent to the local freeness of $\E|_{\li}$. As for condition (c4), the display of $M(\alpha,\beta)$ produces the exact sequence
\begin{equation*}
\xymatrix{
H^0\left(\E|_{\li}(-1)\right) \ar@{>->}[r] & H^0\left(\A|_{\li}(-1)\right) \ar[r]^-\Phi & H^0\left(\Wi(-1)\right) \ar@{->>}[r] & H^1\left(\E|_{\li}(-1)\right)\,.
}
\end{equation*}
Condition (c4) is equivalent to the vanishing of $H^i\left(\E|_{\li}(-1)\right)$, $i=0,1$. The thesis follows easily.
\end{proof}
This result enables us to identify $L_{\vec k}$ with the subset of $\Vek$ whose points correspond to cohomology sheaves $\E_{\alpha,\beta}$ that are torsion-free and trivial at infinity.
\begin{lemma}
\label{6lm10}
The variety $\Lk$ is smooth of dimension $\dim\Lk=\dim\Vek-\dim\Wek$, where $\Wek=\Hom\left(\Uk,\Wk\right)$.
\end{lemma}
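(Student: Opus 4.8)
The plan is to present $\Lk$ as an open subset of the affine scheme $\overline{L}_{\vec{k}}=\mu^{-1}(0)$, where $\mu\colon\Vek\to\Wek$ is the map $\mu(\alpha,\beta)=\beta\circ\alpha$, and to show that at every (closed) point of $\Lk$ the differential of $\mu$ is surjective. Since $\overline{L}_{\vec{k}}$ is cut out in the affine space $\Vek$ by the $\dim\Wek$ equations encoded in $\mu$, surjectivity of $d\mu_{(\alpha,\beta)}$ gives, by the submersion (Jacobian) criterion, that $\overline{L}_{\vec{k}}$—hence its open subset $\Lk$—is smooth at $(\alpha,\beta)$ with tangent space $\ker d\mu$ of dimension exactly $\dim\Vek-\dim\Wek$. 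A straightforward computation gives $d\mu_{(\alpha,\beta)}(\dot\alpha,\dot\beta)=\beta\dot\alpha+\dot\beta\alpha$, so everything reduces to the surjectivity of this linear map onto $\Hom(\Uk,\Wk)$.

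To control its cokernel I would pass to the deformation complex of the monad. Regarding $M=M(\alpha,\beta)$ as a complex of locally free sheaves placed in degrees $-1,0,1$, conditions (c1)--(c2) ensure that $M$ is quasi-isomorphic to its cohomology $\E$, so the internal Hom-complex $\lHom^\bullet(M,M)$ represents $R\lHom(\E,\E)$ and its hypercohomology computes $\Ext^\bullet(\E,\E)$. In the spectral sequence $E_1^{p,q}=H^q(\lHom^p(M,M))\Rightarrow\Ext^{p+q}(\E,\E)$ the differential $H^0(\lHom^1(M,M))\to H^0(\lHom^2(M,M))$ is exactly $\pm d\mu$, whence $\coker d\mu=E_2^{2,0}$. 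Now the terms $\lHom^p(M,M)$ are explicit direct sums of the line bundles $\On(i,j)$ coming from \eqref{eqUVWk}, and a direct application of Lemma \ref{lmVanH} yields $H^1(\lHom^0(M,M))=0$ and $H^2(\lHom^{-1}(M,M))=0$. These vanishings kill every differential entering or leaving the spot $(2,0)$, so $E_2^{2,0}=E_\infty^{2,0}$ is identified with a subspace of $\Ext^2(\E,\E)$. Thus it suffices to prove $\Ext^2(\E,\E)=0$.

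I expect this vanishing to be the main obstacle, precisely because the sheaves parametrized by $\Lk$ are not assumed (semi)stable, so the usual slope argument is unavailable; I would instead exploit the triviality of $\E$ at infinity. By Serre duality $\Ext^2(\E,\E)\cong\Hom(\E,\E\otimes K_{\Sigma_n})^{\ast}$, and since $K_{\Sigma_n}=\On(-2,n-2)$ restricts to $\Ol_{\li}(-n-2)$ on $\li\sim H$, any $\phi\colon\E\to\E\otimes K_{\Sigma_n}$ restricts to zero on $\li$ (there are no nonzero maps $\Ol_{\li}^{\oplus r}\to\Ol_{\li}(-n-2)^{\oplus r}$, using that $\E|_{\li}$ is trivial by (c3)--(c4) and Proposition \ref{6prop3}). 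Hence $\phi$ factors through $\E\otimes K_{\Sigma_n}(-\li)$, and iterating—each further restriction to $\li$ lands in an ever more negative bundle—shows that $\phi$ factors through $\E\otimes K_{\Sigma_n}(-m\li)$ for every $m\geq0$. To finish I would check that $\Hom(\E,\E\otimes K_{\Sigma_n}(-m\li))=H^0(\lHom(\E,\E)\otimes K_{\Sigma_n}(-mH))=0$ for $m\gg0$: the sheaf $\lHom(\E,\E)$ is torsion-free (as $\E$ is, by (c5)), its restriction to a general fibre of the ruling is torsion-free, and twisting by $-mH$ makes the $H^0$ of that restriction vanish for $m\gg0$; a hypothetical global section would then be supported on finitely many fibres, hence be a torsion section, hence zero. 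This forces $\phi=0$, giving $\Ext^2(\E,\E)=0$, whence $d\mu$ is surjective and $\Lk$ is smooth of dimension $\dim\Vek-\dim\Wek$.
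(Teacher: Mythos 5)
Your proof is correct, and its skeleton coincides with the paper's: both present $\Lk$ inside the zero locus of $(\alpha,\beta)\mapsto\beta\alpha$ (called $\zeta$ in the paper, $\mu$ by you), compute the differential $(\dot\alpha,\dot\beta)\mapsto\beta\dot\alpha+\dot\beta\alpha$, and conclude smoothness by showing its cokernel is a degree-two cohomological obstruction killed by triviality at infinity. The execution of the two key steps, however, is genuinely different. For the cokernel the paper simply says one can argue as in \cite[Lemma II.4.1.7]{Ok} to get $\coker(\de\zeta)\simeq H^2\left(\E_{\alpha,\beta}^*\otimes\E_{\alpha,\beta}\right)$, whereas you re-derive the identification from the hypercohomology spectral sequence of $\lHom^\bullet(M,M)$, obtaining $\coker d\mu=E_\infty^{2,0}=F^2\Ext^2(\E,\E)\subseteq\Ext^2(\E,\E)$; your $E_1$-vanishings via Lemma \ref{lmVanH} check out, and this version is self-contained and sidesteps any adaptation of Okonek's lemma (stated there for locally free cohomology) to torsion-free sheaves, since embedding the cokernel into $\Ext^2$ is all one needs. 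For the vanishing itself the paper applies its Lemma \ref{lm3} to $\E^*\otimes\E$, which is again torsion-free and trivial at infinity --- a one-line conclusion --- while you prove $\Ext^2(\E,\E)=0$ from scratch via Serre duality, iterated factorization of $\phi\colon\E\to\E\otimes K_{\Sigma_n}$ through $\E\otimes K_{\Sigma_n}(-m\li)$ (using $K_{\Sigma_n}|_{\li}=\Ol_{\li}(-n-2)$ and $\deg\Ol(\li)|_{\li}=n>0$), and a restriction-to-the-ruling/torsion argument for $m\gg0$. Your route is longer but amounts to an independent proof of the relevant special case of Lemma \ref{lm3}, in the spirit of King's argument that the paper cites for that lemma; the paper's route is shorter because it can lean on results already established earlier in the text. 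Either way the conclusion --- surjectivity of the differential at every point of $\Lk$, hence smoothness of dimension $\dim\Vek-\dim\Wek$ by the Jacobian criterion --- is sound.
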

\begin{proof}
We define the map
\begin{equation*}
\begin{array}{rccl}
\zeta:&\Vek&\longrightarrow&\Wek\\
&(\alpha,\beta)&\longmapsto&\beta\alpha.
\end{array}
\end{equation*}
So $\overline{L}_{\vec{k}}$ is the set $\{\zeta=0\}$. The differential $\de\zeta$ at the point $(\alpha_0,\beta_0)\in\Lk$ is the linear map
\begin{equation*}
\begin{array}{rccl}
(\de\zeta)|_{(\alpha_0,\beta_0)}:&\Vek&\longrightarrow&\Wek\\
&(\alpha,\beta)&\longmapsto&\beta_0\alpha+\beta\alpha_0.
\end{array}
\end{equation*}
The rank of this map is equal to $d=\dim\Vek-\dim\Lk$ on the (non empty) nonsingular locus of $\Lk$, and outside of this set is bounded above by $d$ (see \cite[pp.~31-33]{Har}).

One can prove as in \cite[Lemma II.4.1.7]{Ok} that
for any point $(\alpha,\beta)\in\Lk$, there is an isomorphism
\begin{equation*}
\coker(\de\zeta)|_{(\alpha,\beta)}\simeq H^2\left(\E_{\alpha,\beta}^*\otimes\E_{\alpha,\beta}\right).
\end{equation*}
Now, $\E_{\alpha,\beta}^*\otimes\E_{\alpha,\beta}$ is torsion-free and trivial at infinity, 
 so that $H^2\left(\E_{\alpha,\beta}^*\otimes\E_{\alpha,\beta}\right)=0$ by Lemma \ref{lm3} and $\de\zeta$ has maximal rank everywhere on $\Lk$. Since the singular locus of $\Lk$ coincides with the set of points in which $\de\zeta$ fails to have maximal rank, $\Lk$ is smooth.
\end{proof}

\subsection{The variety $\Pk$}
Let us introduce the varieties $\dT=\Sigma_n\times\Lk$ and $\dTi=\li\times\Lk$, together with the canonical projections shown in the following diagram:
\begin{equation*}
\xymatrix{
\li \ar@{^{(}->}[r] & \Sigma_n\\
\dTi  \ar@{^{(}->}[r] \ar[u]_-{\du_1} \ar[dr]_{\du_2} & \dT \ar[u]_{\dt_1} \ar[d]^{\dt_2}\\
 & \Lk\,.
}
\label{eqdT}
\end{equation*}
On $\dT$ we define the complex
\begin{equation*}
\xymatrix@C+1em{
\dMo:\qquad \dt_1^*\Uk \ar[r]^-{f_A} & \dt_1^*\Vk \ar[r]^-{f_B} & \dt_1^*\Wk 
}
\end{equation*}
where $f_A$ and $f_B$ are the defined by the formulas
\begin{equation*}
\begin{array}{rccl}
f_{A}\colon&(\dt_1^*\Uk)_{(x,\alpha,\beta)}&\longrightarrow&(\dt_1^*\Vk)_{(x,\alpha,\beta)}\\
&u&\longmapsto&\alpha(u)\\[7pt]
f_{B}\colon&(\dt_1^*\Vk)_{(x,\alpha,\beta)}&\longrightarrow&(\dt_1^*\Wk)_{(x,\alpha,\beta)}\\
&v&\longmapsto&\beta(v)
\end{array}\,;
\end{equation*}
here $u$ and $v$  are  points in the fibres over  $(x,\alpha,\beta)\in \Sigma_n\times\Lk$.
This complex is actually a monad by Lemma \ref{lmInjSurj}: let $\dE$ be its cohomology. This sheaf satisfies condition $\vec{k}$, and more precisely one has the following isomorphism for all points $(\alpha,\beta)\in\Lk$:
\begin{equation}\label{eqdEab}
\left(\dE\right)_{(\alpha,\beta)}\simeq\E_{\alpha,\beta}\,.
\end{equation}
The restriction $\dMoi$ of $\dMo$ to $\dTi$ is isomorphic to the monad
\begin{equation*}
\xymatrix@C+1em{
0 \ar[r] & \du_1^*\Ui \ar[r]^-{g_A} & \du_1^*\Vi \ar[r]^-{g_B} & \du_1^*\Wi \ar[r] & 0\,.
}
\end{equation*}
where $g_A$ and $g_B$ are the defined by the formulas
\begin{equation*}
\begin{array}{rccl}
g_{A}\colon&(\du_1^*\Ui)_{(x,\alpha,\beta)}&\longrightarrow&(\du_1^*\Vi)_{(x,\alpha,\beta)}\\
&u&\longmapsto&\alpha|_{\li}(u)\\[7pt]
g_{B}\colon&(\du_1^*\Vi)_{(x,\alpha,\beta)}&\longrightarrow&(\du_1^*\Wi)_{(x,\alpha,\beta)}\\
&v&\longmapsto&\beta|_{\li}(v)
\end{array}\,.
\end{equation*}
The cohomology of $\dMoi$  will be denoted by $\dEi$. It turns out that 
\begin{equation*} \left(\dEi\right)_{(\alpha,\beta)}\simeq \E_{\alpha,\beta,\infty}
\end{equation*}
for all points $(\alpha,\beta)\in\Lk$.

Let $\mathfrak N_{\vec k}$ denote the direct image $\breve{\mathfrak u}_{2\ast} \breve{\mathfrak E}_{\vec k,\infty}$. Since $\breve{\mathfrak E}_{\vec k,\infty}$
is a trivial vector  bundle on each fibre of $\breve{\mathfrak u}_{2}$, the sheaf $\mathfrak N_{\vec k}$  is locally free of rank $r$. Let $P_{\vec k}$
be its bundle of linear frames, \label{pagPk} which is a principal $\operatorname{GL}(r)$ bundle on $L_{\vec k}$. Moreover, if 
\begin{equation}\label{tau} 
\tau\colon P_{\vec k} \to L_{\vec k}
\end{equation}
 is the projection, the vector bundle $\widetilde{\mathfrak N}_{\vec k}=\tau^\ast \mathfrak N_{\vec k}$
is trivial. 
A point $\eta\in\Pk$ determines a framed sheaf $(\E,\theta)$
up to isomorphism, and two points in $\Pk$ corresponding to isomorphic framed sheaves are related by the action of an element of the group $\Gk$ of the automorphisms of the monad.
Indeed, in Section \ref{moduli} we shall construct the moduli space of framed sheaves as a quotient $\Pk/\Gk$.

\subsection{The family $\left(\tE,\tTheta\right)$}\label{sectEtT}
The geometrical environment of this subsection is provided 
by the varieties $\tT$ and $\tTi$, together with the canonical projections shown in the following diagram:
\begin{equation*}
\xymatrix{
\li \ar@{^{(}->}[r] & \Sigma_n\\
\tTi  \ar@{^{(}->}[r] \ar[u]_-{\ua_1} \ar[dr]_{\ua_2} & \tT \ar[u]_{\ta_1} \ar[d]^{\ta_2}\\
 & \Pk\,.
}
\end{equation*}
\begin{prop}
Let $\tE:=\left(\id_{\Sigma_n}\times\tau\right)^*\dE$.
This sheaf satisfies condition $\vec{k}$, and in particular for any point $\eta\in\Pk$ one has the natural isomorphism
\begin{equation}
\left(\tE\right)_\eta\simeq\E_{\tau(\eta)}\,.
\label{eqtEi}
\end{equation}
We shall call $\tEi$ the restriction of $\tE$ to $\tTi$, so that $\tEi\simeq\left(\id_{\Pu}\times\tau\right)^*\dEi\simeq\ua_2^*\tilde{\mathfrak N}_{\vec k}$.
\end{prop}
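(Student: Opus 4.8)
The plan is to derive everything from the flatness of $\tau$ together with functoriality of pullback and a single $\lTor$-vanishing at infinity. Since $\Pk$ is the bundle of linear frames of the locally free sheaf $\hN$, the projection $\tau\colon\Pk\to\Lk$ is Zariski-locally trivial with fibre $\GL(r,\Com)$, hence smooth and in particular flat; thus $g:=\id_{\Sigma_n}\times\tau\colon\tT\to\dT$ is flat as well, and the square with top arrow $g$, bottom arrow $\tau$ and verticals $\ta_2,\dt_2$ is cartesian, exhibiting $\tT=\dT\times_{\Lk}\Pk$. As $\tE=g^\ast\dE$ and flatness relative to the base is stable under base change, $\tE$ is flat over $\Pk$. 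For a closed point $\theta\in\Pk$, writing $i_\theta\colon\Sigma_n\hookrightarrow\tT$ for the inclusion of the fibre $\tT_\theta$, the composite $g\circ i_\theta$ is the inclusion of the fibre $\dT_{\tau(\theta)}$, so by functoriality
\[
(\tE)_\theta=i_\theta^\ast g^\ast\dE=(g\circ i_\theta)^\ast\dE=(\dE)_{\tau(\theta)}\simeq\E_{\tau(\theta)}.
\]
Because $\dE$ satisfies condition $\vec{k}$, each fibre $(\dE)_{\tau(\theta)}=\E_{\tau(\theta)}$ is torsion-free, trivial at infinity, and of Chern character $(r,aE,-c-\tfrac12na^2)$; together with flatness over $\Pk$ this is precisely condition $\vec{k}$ for $\tE$.

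Next I would establish the identification $\dE|_{\dTi}\simeq\dEi$ on $\dTi$, which is the heart of the statement. In the display \eqref{eq44} of the relative monad $\dMo$, the kernel $\B:=\ker f_B$ is locally free (a kernel of a surjection of vector bundles with locally free quotient), and restricting the exact column $0\to\B\to\dt_1^\ast\Vk\xrightarrow{f_B}\dt_1^\ast\Wk\to0$ to the divisor $\dTi$ stays exact, so $\B|_{\dTi}=\ker g_B$. Restricting instead the exact row $0\to\dt_1^\ast\Uk\xrightarrow{f_A}\B\to\dE\to0$, the only possible obstruction to exactness is $\lTor_1^{\Ol_{\dT}}(\dE,\Ol_{\dTi})$. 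Since $\li\sim H$ the ideal of $\dTi$ is $\dt_1^\ast\On(-1,0)$, and from the resolution $0\to\dt_1^\ast\On(-1,0)\to\Ol_{\dT}\to\Ol_{\dTi}\to0$ this $\lTor_1$ is the kernel of the canonical map $\dE\otimes\dt_1^\ast\On(-1,0)\to\dE$. Fibrewise this is $\E_{\alpha,\beta}(-1,0)\to\E_{\alpha,\beta}$, multiplication by the equation of $\li$, which is injective because $\E_{\alpha,\beta}$ is torsion-free; hence the map is injective by Lemma \ref{lmInjSurj}, the $\lTor_1$ vanishes, and the restricted row gives $\dE|_{\dTi}=\B|_{\dTi}/\im g_A=\ker g_B/\im g_A=\dEi$.

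The first asserted isomorphism is then pure functoriality. Writing $j\colon\tTi\hookrightarrow\tT$, $j'\colon\dTi\hookrightarrow\dT$ for the inclusions and $g'=\id_{\Pu}\times\tau$, one has $g\circ j=j'\circ g'$, so
\[
\tEi=j^\ast g^\ast\dE=(g')^\ast(j')^\ast\dE=(g')^\ast\bigl(\dE|_{\dTi}\bigr)\simeq(\id_{\Pu}\times\tau)^\ast\dEi,
\]
with no flatness needed here. For the second isomorphism I would use that $\dEi$ is fibrewise $\Ol_{\li}^{\oplus r}$: by cohomology and base change along the $\Pu$-bundle $\du_2\colon\dTi\to\Lk$ the counit $\du_2^\ast\du_{2\ast}\dEi\to\dEi$ is an isomorphism, i.e.\ $\dEi\simeq\du_2^\ast\hN$. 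Pulling back by $g'$ and using $\du_2\circ g'=\tau\circ\ua_2$ gives
\[
(\id_{\Pu}\times\tau)^\ast\dEi\simeq(g')^\ast\du_2^\ast\hN=(\tau\circ\ua_2)^\ast\hN=\ua_2^\ast\tau^\ast\hN=\ua_2^\ast\tN,
\]
which is the final claim.

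I expect the main obstacle to be the middle step, the identification $\dE|_{\dTi}\simeq\dEi$: restriction to the line at infinity does not commute with passage to monad cohomology for formal reasons, and the argument genuinely rests on the fibrewise torsion-freeness of $\E_{\alpha,\beta}$ to kill $\lTor_1(\dE,\Ol_{\dTi})$ via Lemma \ref{lmInjSurj}. Once this exactness is secured, the two displayed isomorphisms reduce to functoriality of pullback and a standard cohomology-and-base-change computation.
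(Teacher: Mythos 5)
Your proof is correct and follows essentially the same route as the paper, whose own proof is a three-line argument resting on the fibre-product identifications $\tT\simeq\Pk\times_{\Lk}\dT$ and $\tTi\simeq\Pk\times_{\Lk}\dTi$ together with flatness and flat base change --- exactly the structure you use. The one point where you go beyond the paper is the identification $\dE|_{\dTi}\simeq\dEi$ via the vanishing of $\lTor_1\bigl(\dE,\Ol_{\dTi}\bigr)$ from fibrewise torsion-freeness and Lemma \ref{lmInjSurj}; the paper dismisses this as ``trivial'' (it is implicitly assumed when $\dMoi$ is declared to be the restriction of $\dMo$), and your careful argument is a correct filling-in of that detail rather than a different approach.
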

\begin{proof}
The flatness of $\tE$ on $\Pk$ follows from the identification $\tT\simeq\Pk\times_{\Lk}\dT$. The isomorphism \eqref{eqtEi} follows from eq. \eqref{eqdEab} since $(\tE)_\eta\simeq(\dE)_{\tau(\eta)}$. The last statement is trivial.
\end{proof}
One can extend the action $\rho$ of $\Gk$ on $\Pk$ to actions $\trho$ on $\tT$ and  $\rhoi$ on $\tTi$ 
by setting
\begin{equation*}
\left\{
\begin{aligned}
\trho&:=\id_{\Sigma_n}\times\rho\\
\rhoi&:=\id_{\li}\times\rho\,.
\end{aligned}
\right.
\end{equation*}
\begin{prop}
\label{proGontMo}
The sheaves $\tE$ and $\tEi$ are, respectively, isomorphic to the cohomologies of the monads
\begin{align*}
\tMo&:=\left(\id_{\Sigma_n}\times\tau\right)^*\dMo;\\
\tMoi&:=\left(\id_{\li}\times\tau\right)^*\dMoi.
\end{align*}
Both monads $\tMo$ and $\tMoi$ are $\Gk$-equivariant.
\end{prop}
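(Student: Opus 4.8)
The plan is to establish both the monad identification and the $\Gk$-equivariance as essentially formal consequences of pulling back the constructions already in place on $\dT$ and $\dTi$. First I would address the identification of $\tE$ and $\tEi$ with the cohomologies of $\tMo$ and $\tMoi$. The key point is that pullback along a morphism of schemes is an exact functor on locally-free sheaves, and more importantly commutes with taking the cohomology of a monad whenever the restricted morphisms remain a monad. By Lemma \ref{lmInjSurj} and Lemma \ref{lmFlat}, the pullback $\tMo=\left(\id_{\Sigma_n}\times\tau\right)^*\dMo$ of the monad $\dMo$ is again a monad: the terms $\ta_1^*\Uk$, $\ta_1^*\Vk$, $\ta_1^*\Wk$ are locally free, the morphism $\left(\id_{\Sigma_n}\times\tau\right)^*f_A$ stays fibrewise injective because $\tau$ is surjective and each fibre of $\tT\to\Pk$ maps isomorphically to a fibre of $\dT\to\Lk$, and similarly $\left(\id_{\Sigma_n}\times\tau\right)^*f_B$ stays surjective. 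Since $\tE=\left(\id_{\Sigma_n}\times\tau\right)^*\dE$ by definition and $\dE$ is the cohomology of $\dMo$, the right-exactness of pullback together with the flatness guaranteeing that $\ker$ and $\coker$ pull back correctly gives $\tE\simeq\E(\tMo)$. The same argument applied verbatim over $\dTi$ yields $\tEi\simeq\E(\tMoi)$.

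Next I would verify the $\Gk$-equivariance of the two monads. The action $\rho$ of $\Gk$ on $\Pk$ was defined in eq.~\eqref{eqGonP}, and the associated actions $\trho=\id_{\Sigma_n}\times\rho$ and $\rhoi=\id_{\li}\times\rho$ on $\tT$ and $\tTi$ are already in hand. The crucial structural fact is that $\tau\colon\Pk\to\Lk$ is $\Gk$-equivariant with respect to $\rho$ on $\Pk$ and $\rho_0$ on $\Lk$, as noted at the end of the list of constructions on page \pageref{pagcond2}. Therefore the square relating $\left(\id_{\Sigma_n}\times\tau\right)$ to the $\Gk$-actions on $\tT$ and $\dT$ commutes, and pulling the monad $\dMo$ back along a $\Gk$-equivariant map produces a $\Gk$-equivariant monad in an automatic way: for each $\bpsi\in\Gk$ the isomorphism $\trho_{\bpsi}^*\tMo\simeq\tMo$ is induced by the monad morphism $\bpsi\colon M(\alpha,\beta)\to M(\alpha',\beta')$ of \eqref{eqrho0} pulled back through the frames. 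The cocycle identity for the induced isomorphisms $\Lambda_\infty$, recorded just after \eqref{lambda}, is exactly what guarantees that these pullback isomorphisms compose correctly and hence define a genuine equivariant structure rather than merely a fibrewise one.

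The step I expect to require the most care is making precise the sense in which $\dMo$ itself is $\Gk$-equivariant on $\dT$, since this is what is then transported to $\tT$. The morphisms $f_A$ and $f_B$ were defined pointwise by $u\mapsto\alpha(u)$ and $v\mapsto\beta(v)$, so under the action $\alpha\mapsto\psi\alpha\phi^{-1}$, $\beta\mapsto\chi\beta\psi^{-1}$ the monad $\dMo$ is carried to an isomorphic monad via the triple $(\phi,\psi,\chi)$ acting on the three terms $\dt_1^*\Uk$, $\dt_1^*\Vk$, $\dt_1^*\Wk$. One must check that these term-wise automorphisms intertwine $f_A$, $f_B$ with their transforms, i.e.\ that the two evident squares commute; this is a direct unwinding of \eqref{eqrho0} and is purely formal, but it is the place where one genuinely uses the explicit form of the action rather than abstract equivariance of $\tau$. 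Once this is settled on $\dT$, pulling back along the equivariant $\id_{\Sigma_n}\times\tau$ immediately yields equivariance of $\tMo$, and the restriction to $\tTi$ gives equivariance of $\tMoi$, completing the proof.
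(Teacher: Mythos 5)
Your proposal is correct and takes essentially the same route as the paper: the identification of $\tE$ and $\tEi$ with the cohomologies of the pulled-back monads is the same flat base-change argument (via $\tT\simeq\Pk\times_{\Lk}\dT$ and the flatness of the frame-bundle projection $\tau$) that the paper uses for the preceding Proposition, and the step you flag as requiring the most care---the termwise morphism $(u,v,w)\mapsto(\phi(u),\psi(v),\chi(w))$ intertwining $f_A,f_B$ with their $\rho_0$-transforms, verified by unwinding eq.~\eqref{eqrho0}---is precisely the morphism of monads $m_{12}^*\dMo \to \left(\id_{\Sigma_n}\times\rho_0\right)^*\dMo$ constructed in the paper's proof of the ensuing Corollary. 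No essential difference.
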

\begin{cor}
The sheaf $\tE$ admits a $\Gk$-linearization $\Psi$  satisfying the isomorphism
\begin{equation*}
\left(\Psi|_{\tTi\times\Gk}\right)_{\left(\eta,\bpsi\right)}\simeq\Lambda_\infty\left(\alpha,\beta;\bpsi\right)
\end{equation*}
for any point $\left(\eta,\bpsi\right)\in\Pk\times\Gk$. Here $(\alpha,\beta)=\tau(\eta) $.
\end{cor}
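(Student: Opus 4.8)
The plan is to realize $\Psi$ as the $\Gk$-linearization that $\tE$ carries for purely formal reasons, being the cohomology of the $\Gk$-equivariant monad $\tMo$, and then to read off its restriction to $\tTi$ directly from the definition of $\Lambda_\infty$. Recall that, for the action $\trho$ of $\Gk$ on $\tT$, the $\Gk$-linearized coherent sheaves form an abelian category whose forgetful functor to coherent sheaves on $\tT$ is exact and faithful; giving a $\Gk$-linearization of $\tE$ is the same as promoting $\tE$ to an object of this category, equivalently producing an isomorphism between $p^\ast\tE$ and $\trho^\ast\tE$ on $\tT\times\Gk$ (here $p\colon\tT\times\Gk\to\tT$ is the projection) satisfying the cocycle condition on $\tT\times\Gk\times\Gk$.

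By Proposition \ref{proGontMo} the monad $\tMo$ is $\Gk$-equivariant: its terms $\ta_1^\ast\Uk$, $\ta_1^\ast\Vk$, $\ta_1^\ast\Wk$ are $\Gk$-linearized --- on the fibres an element $\bpsi=(\phi,\psi,\chi)$ acting through $\phi$, $\psi$, $\chi$ respectively --- and the two differentials are morphisms of $\Gk$-linearized sheaves. Hence $\tMo$ is a monad in the abelian category above, and its cohomology $\tE$ is automatically an object of that category: this enrichment is the desired $\Psi$. Concretely, $\Psi$ is manufactured from the display \eqref{eq44}, since the kernel and cokernel of equivariant morphisms are again equivariant, whence so is the subquotient $\tE$ of $\ta_1^\ast\Vk$. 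The cocycle condition for $\Psi$ is inherited from that of the term-wise linearizations, which holds because the assignment $\bpsi\mapsto(\phi,\psi,\chi)$ is compatible with the group law of $\Gk$ recorded in \eqref{eqrho0}; its restriction to $\tTi$ is precisely the identity for $\Lambda_\infty$ displayed just before the statement.

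It remains to identify $\Psi|_{\tTi\times\Gk}$. Restricting the whole construction to $\tTi$, and using that $\tMo|_{\tTi}=\tMoi$ has cohomology $\tEi$, the restricted linearization is the one of $\tEi$. Fix $(\theta,\bpsi)$ and set $(\alpha,\beta)=\tau(\theta)$; by the $\Gk$-equivariance of $\tau$ one has $(\alpha',\beta')=\tau(\rho(\theta,\bpsi))=\bpsi\cdot(\alpha,\beta)$. The fibre identifications \eqref{eqdEi} and \eqref{eqtEi} give $(\tEi)_\theta\simeq\E_{\alpha,\beta,\infty}$ and $(\tEi)_{\rho(\theta,\bpsi)}\simeq\E_{\alpha',\beta',\infty}$; since $\tEi$ is the cohomology of $\tMoi$ and formation of this cohomology commutes with restriction to these fibres, the fibre of $\Psi$ at $(\theta,\bpsi)$ is the map induced on cohomology by the restriction to $\li$ of the monad isomorphism $\bpsi\colon M(\alpha,\beta)\to M(\alpha',\beta')$. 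By the very definition \eqref{lambda} this is $\Lambda_\infty(\alpha,\beta;\bpsi)$, which is the claim.

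The main obstacle is this last identification: one must check that the canonical fibre isomorphisms of \eqref{eqdEi}--\eqref{eqtEi} intertwine the term-wise linearizations used to build $\Psi$ with the monad isomorphism $\bpsi$ used to define $\Lambda_\infty$, so that the two induced maps on cohomology coincide on the nose rather than merely up to an automorphism. This amounts to tracing a single diagram --- the display restricted to $\li$ --- and verifying that both constructions use the same trivializations of the monad terms at infinity; the remaining ingredients (the abelianness of the equivariant category, the exactness of $\trho^\ast$ and $p^\ast$, and the cocycle condition) are formal.
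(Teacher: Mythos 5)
Your proof is correct and takes essentially the same approach as the paper: the paper likewise derives $\Psi$ from the equivariance of the monad (it writes down the termwise morphism $(u,v,w)\mapsto(\phi(u),\psi(v),\chi(w))$ of monads $m_{12}^*\dMo \to \left(\id_{\Sigma_n}\times\rho_0\right)^*\dMo$ over $\dT\times\Gk$ and pulls it back to $\tT\times\Gk$), the induced map on cohomology restricting at infinity to $\Lambda_\infty\left(\alpha,\beta;\bpsi\right)$ by the very definition \eqref{lambda}. Your packaging via the abelian category of $\Gk$-linearized sheaves, and your explicit checks of the cocycle condition and of the fibrewise identification, merely spell out what the paper's terser fibrewise construction leaves implicit.
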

\begin{proof} One defines a morphism of monads
$
m_{12}^*\dMo \to \left(\id_{\Sigma_n}\times\rho_0\right)^*\dMo
$
by letting
\begin{equation*}
\begin{array}{ccl}
(m_{12}^*\dMo)_{(x,\alpha,\beta,\bpsi)}&\longrightarrow&( \left(\id_{\Sigma_n}\times\rho_0\right)^*\dMo)_{(x,\alpha,\beta,\bpsi)}\\[5pt]
(u,v,w) & \longmapsto& (\phi(u),\psi(v),\chi(w))\end{array}\,.
\end{equation*}
where $m_{12}\colon \dT\times\Gk\to \dT$ is the canonical projection and $\rho_0$ was defined in eq.~\eqref{eqrho0}.
This induces by pullback a morphism 
$\Psi$ as in the statement of this Corollary.
\end{proof}

Since $\Pk$ is the bundle of linear frames of $\hN$, there exists a canonical isomorphism $\widetilde{\mathfrak{N}}_{\vec{k}}\longrightarrow\Ol_{\Pk}^{\oplus r}$, which can be regarded as a framing $\tTheta$ for the sheaf $\tE$. As a consequence of eq.~\eqref{eqGonP}, the morphism $\tTheta$ is $\Gk$-equivariant, namely, the following diagram   commutes
\begin{equation*}
\xymatrix@C+1em{
l_{12}^*\tEi \ar[r]^-{l_{12}^*\tTheta} \ar[d]_{\Psi_\infty} & \Ol^{\oplus r}_{\tTi\times\Gk}  \\
\rhoi^*\tEi \ar[ru]_-{\rhoi^*\tTheta}  
}
\label{eqtTh*}
\end{equation*}
where $\Psi_\infty=\Psi|_{\tTi\times\Gk}$ and $l_{12}:\tTi\times\Gk\longrightarrow\tTi$ is the projection.

\medskip
\section{The moduli space $\Mk$}\label{moduli}
In this section we   give the moduli space $\Mk$ a scheme structure, and  prove the first part of the Main Theorem \ref{teo2}. The space $\Mk$ can be set-theoretically identified with the quotient $\left.\Pk\right/\Gk$. We denote by $\pu:\Pk\longrightarrow\Mk$ the natural projection. 
\begin{thm}
\label{proMk}
The orbit space $\Mk$ is a smooth algebraic variety, and $\Pk$ is a locally trivial principal $\Gk$-bundle over it. 
\end{thm}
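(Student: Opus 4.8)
The plan is to realize $\Mk=\Pk/\Gk$ as a geometric quotient by verifying that the $\Gk$-action on $\Pk$ is free and proper, with local triviality coming from an appropriate slice or étale-local section. First I would record that the action $\rho$ defined in \eqref{eqGonP} is \emph{free}: by Lemma \ref{pro3} two points $\theta,\theta'\in\Pk$ lie in the same $\Gk$-orbit precisely when the associated framed sheaves $(\E_{\tau(\theta)},\theta)$ and $(\E_{\tau(\theta')},\theta')$ are isomorphic as framed sheaves, and the isomorphism of monads realizing such an equivalence is unique (an automorphism of the monad inducing the identity on cohomology and fixing the framing must be the identity, since a monad automorphism acting trivially on $\E$ and on $\E|_{\li}$ is determined by its action on the locally free terms and is forced to be $\id$). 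Hence stabilizers are trivial.

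Next I would establish that the orbits are exactly the fibres of a well-behaved map and that the quotient is smooth. The key structural input is the factorization $\tau\colon\Pk\to\Lk$ as a principal $\GL(r,\Com)$-bundle together with the $\Gk$-equivariance of $\tau$. I would argue in two stages. On $\Lk$ the residual group acting is $\Gk$ modulo the scalars that act trivially, and the quotient $\Lk/\Gk$ together with the $\GL(r,\Com)$-framing data reassembles into $\Pk/\Gk$. Concretely, I expect the cleanest route is to exhibit, locally on $\Mk$, an étale (or even Zariski) slice: using Corollary \ref{corLocOnS}, over a small enough affine $S\to\Mk$ the universal monad trivializes, giving a section of $\pu$ and hence a local trivialization $\pu^{-1}(U)\simeq U\times\Gk$. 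The smoothness of $\Mk$ then follows because $\Pk$ is smooth (it is a $\GL(r,\Com)$-bundle over the smooth variety $\Lk$ of Lemma \ref{6lm10}) and $\pu$ is a locally trivial $\Gk$-bundle, so $\Mk$ is locally a smooth factor of a smooth scheme; the dimension count $\dim\Mk=\dim\Pk-\dim\Gk=\dim\Lk+r^2-\dim\Gk$ should reproduce $2rc+(r-1)na^2$.

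The properness of the action — needed to guarantee that the set-theoretic quotient is separated and carries a scheme structure — I would handle via the standard GIT-type criterion or, more directly, by constructing the local sections and checking that the transition data are algebraic, which is where the explicit linear-algebra description of $\Gk$ and of the framing fibres of $\Pk$ pays off. The $\Gk$-equivariance established in Proposition \ref{proGontMo} and the corollary on the $\Gk$-linearization of $\tE$ guarantee that the descent of the universal family is compatible, so the quotient map is a morphism of schemes and not merely a set map.

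The main obstacle I anticipate is proving \emph{local triviality} of $\pu$ (equivalently, constructing local sections of the quotient) rather than freeness, which is essentially formal. Freeness reduces to the uniqueness half of Lemma \ref{pro3}, but exhibiting Zariski- or étale-local slices requires showing that the trivialization of the higher direct images $R^1t_{2*}(\Fa(p,q))$ from Corollary \ref{corLocOnS} can be chosen $\Gk$-equivariantly and algebraically over a neighborhood of each orbit. I would isolate this as the technical heart of the argument: given a point of $\Mk$, pull back the universal monad to a local affine chart, rigidify the splitting of the terms $\Uk,\Vk,\Wk$ to kill the $\Gk$-ambiguity, and check that the resulting section is regular. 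Everything else — smoothness, the dimension formula, and functoriality of the quotient — then follows from the local product structure $\pu^{-1}(U)\simeq U\times\Gk$ and the smoothness of $\Pk$.
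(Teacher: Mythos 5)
Your freeness argument is essentially the paper's (Corollary \ref{cor1}): by Lemma \ref{lm2} a morphism between the cohomology sheaves is determined by its restriction to $\li$, and it is induced by a unique isomorphism of monads, so a stabilizing $\bpsi$ is forced to be the identity; this part is fine. The gap lies in the other two ingredients. First, your mechanism for local triviality is circular: you propose to trivialize ``the universal monad'' over small affine charts of $\Mk$ via Corollary \ref{corLocOnS} so as to produce local sections of $\pu$. But at this stage $\Mk$ is only a quotient set; the universal family $\left(\Eu,\Thu\right)$ is constructed in the following section of the paper, and its construction (Lemma \ref{lmLocUu}, taking $\Gk$-invariants of $\q_*\tE$ over opens of $\Mk$ trivializing $\Pk$) already \emph{presupposes} that $\Pk\to\Mk$ is a locally trivial principal $\Gk$-bundle. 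So there is no family over opens of $\Mk$ to which Corollary \ref{corLocOnS} could be applied before Theorem \ref{proMk} is proved. Second, you defer properness to ``the standard GIT-type criterion'' or to the very local sections you have not yet constructed; but this is where the paper's real work sits, namely Proposition \ref{lmClos}: the graph $\Gamma$ of the action is closed in $\Pk\times\Pk$ because $\Gamma$ is the locus where $\dim\Hom_\Lambda\left(\E_{\alpha,\beta},\E_{\alpha',\beta'}\right)=1$, and closedness follows from semicontinuity applied to the sheaf $\Kl_\Omega$, whose flatness rests on Proposition \ref{teo1'} (flatness of $\lHom$ for flat families with torsion-free fibres). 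Nothing in your sketch substitutes for this argument.

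Moreover, even granting freeness and properness, your plan to exhibit Zariski-local slices directly by ``rigidifying the splitting of the terms'' has no justification: for a general structure group, a geometric quotient only yields a locally \emph{isotrivial} bundle (\'etale-local sections), and Zariski-local triviality can genuinely fail. The paper's route is: (i) pass to the analytic category, where Holmann's theorem shows that the free, locally proper action of $\Gk^{an}$ on $\Pk^{an}$ has a complex-manifold quotient, and then algebraize to endow $\Mk$ with the structure of a smooth algebraic variety (this also yields smoothness without your dimension count); (ii) apply Mumford's criterion to conclude that $\Pk\to\Mk$ is a locally isotrivial principal $\Gk$-bundle; (iii) invoke Lemma \ref{lmspec}, that $\Gk$ is \emph{special} in Serre's sense --- it is an extension of a product of general linear groups by the unipotent group $H_{k_4,nk_2}$ --- to upgrade isotriviality to Zariski-local triviality. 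Your proposal omits all three steps; in particular the specialty of $\Gk$ is precisely the missing ingredient that makes Zariski-local sections exist, so the ``technical heart'' you correctly isolate remains unproved in your approach.
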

In order to prove this Theorem, we need to investigate some properties of the $\Gk$-action on $\Pk$.

\begin{lemma}\label{lm2}
Let $\mathcal{E}$ and $\mathcal{E}'$ be sheaves on $\Sigma_n$ that are torsion-free and trivial at infinity. There is an injection
\begin{equation*}
\xymatrix{0 \ar[r] &
\text{\emph{Hom}}(\mathcal{E},\mathcal{E}') \ar[r]^-{R} & \text{\emph{Hom}}(\mathcal{E}|_{\ell_{\infty}},\mathcal{E}'|_{\ell_{\infty}})\simeq\text{\emph{End}}(\mathbb{C}^r)
}
\label{eq380}
\end{equation*}
where $R$ is the restriction morphism.
\end{lemma}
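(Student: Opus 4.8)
The plan is to exhibit $R$ as a map in a $\Hom$ long exact sequence and to identify its kernel with a vanishing cohomology group. First I would tensor the structure sequence of $\li$ (which, since $\li\sim H$, has ideal sheaf $\On(-1,0)$) with $\E'$ to obtain the short exact sequence
\[
0 \to \E'(-1,0) \to \E' \to \E'|_{\li} \to 0
\]
on $\Sigma_n$; exactness on the left uses that $\E'$ is torsion-free, so the section cutting out $\li$ is a non-zerodivisor. Applying the left-exact functor $\Hom(\E,-)$ and using the adjunction $\Hom(\E,\E'|_{\li})\simeq\Hom(\E|_{\li},\E'|_{\li})$, the resulting map $\Hom(\E,\E')\to\Hom(\E|_{\li},\E'|_{\li})$ is precisely the restriction $R$, and its target is identified with $\End(\Com^r)$ because $\E|_{\li}\simeq\E'|_{\li}\simeq\Ol_{\li}^{\oplus r}$ and $H^0(\Ol_{\li})=\Com$. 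Left exactness then yields $\ker R\simeq\Hom(\E,\E'(-1,0))$.

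It therefore suffices to prove $\Hom(\E,\E'(-1,0))=0$. I would rewrite this group as $H^0\big(\lHom(\E,\E')(-1,0)\big)$ and set $\G:=\lHom(\E,\E')$. Provided $\G$ is torsion-free and trivial at infinity, Lemma \ref{lm3} applies with $(p,q)=(-1,0)$: since $n\ge 1$ one has $np+q=-n\le -1$, whence $H^0(\G(-1,0))=0$ and the injectivity of $R$ follows. This is exactly the $H^0$-companion of the $H^2$-vanishing used later in Lemma \ref{6lm10}.

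The substance of the argument is thus the claim that $\G=\lHom(\E,\E')$ is torsion-free and trivial at infinity. Torsion-freeness I would obtain by embedding $\E'$ into its double dual $(\E')^{**}$, which is locally free since reflexive sheaves on a smooth surface are locally free; this induces an inclusion $\G\hookrightarrow\lHom(\E,(\E')^{**})=\E^*\otimes(\E')^{**}$, and the right-hand side is locally free (as $\E^*$ is locally free for the same reason), so $\G$, being a subsheaf of a torsion-free sheaf, is torsion-free. Triviality at infinity is where I expect the only real care to be needed: the hypothesis $\E|_{\li}\simeq\Ol_{\li}^{\oplus r}$ forces $\li$ to avoid the finite non-locally-free loci of $\E$ and $\E'$ — a torsion-free sheaf whose restriction to $\li$ is locally free of the full rank $r$ must itself be locally free in a neighbourhood of $\li$ — so near $\li$ one has $\G=\E^*\otimes\E'$ with $\G|_{\li}\simeq(\E|_{\li})^*\otimes\E'|_{\li}\simeq\Ol_{\li}^{\oplus r^2}$. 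Verifying this local-freeness-near-$\li$ step carefully, and hence that $\G$ is genuinely trivial at infinity, is the main obstacle; everything else is a formal consequence of left exactness and Lemma \ref{lm3}.
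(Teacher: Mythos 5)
Your proof is correct and takes essentially the same route as the paper: the paper likewise reduces the statement to the vanishing $H^0\bigl(\lHom(\E,\E')(-\li)\bigr)=0$ via the structure sequence of $\li$ and Lemma \ref{lm3}, and handles the non-locally-free case through the injection into double duals, using that $\E^{**}$ and $(\E')^{**}$ are locally free. Your only variation is organizational --- you verify directly that $\lHom(\E,\E')$ is torsion-free and trivial at infinity (the local-freeness-near-$\li$ fact you flag is the standard fibre-dimension argument, used implicitly elsewhere in the paper, e.g.\ in Corollary \ref{proInjInfty}), whereas the paper first treats the locally free case via $H^0(\E^*\otimes\E')$ and then reduces to it.
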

\begin{proof} If $\mathcal{E}$ and $\mathcal{E}'$ are locally free, one has $\Hom\left(\E,\E'\right)\simeq H^0(\E^*\otimes\E')$. The sheaf $\E^*\otimes\E'$ is locally free and trivial at infinity, so that the result follows by twisting the structure sequence of $\li$ by it and  taking cohomology (see Lemma \ref{lm3}).
In   general, one can conclude because there is an injective morphism
$\Hom\left(\E,\E'\right)\to \Hom\left(\E^{\ast\ast},\E'^{\ast\ast}\right)$,
 and $\mathcal{E}^{\ast\ast}$ and $\mathcal{E}'^{\ast\ast}$ are locally free.
\end{proof}
This result generalizes to the relative situation. Let $S$  be a scheme, and let $\Fa$ and $\Fa'$ be two sheaves on $T=\Sigma_n\times S$ satisfying condition $\vec{k}$. 
\begin{cor}
\label{proInjInfty}
The restriction morphism
\begin{equation*}
\xymatrix{
\Hom\left(\Fa,\Fa'\right) \ar[r]^-R & \Hom\left(\Fa|_{T_\infty},\Fa'|_{T_\infty}\right)
}
\end{equation*}
is injective.
\end{cor}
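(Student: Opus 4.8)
The plan is to reduce the injectivity of $R$ to the absolute statement of Lemma \ref{lm2} fibre by fibre, and then to propagate the fibrewise vanishing back to a global statement by base change. Concretely, I would take a morphism $\varphi\in\ker R$, so that $\varphi\colon\Fa\to\Fa'$ satisfies $\varphi|_{T_\infty}=0$, and aim to prove $\varphi=0$.

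First I would restrict to a closed point $s\in S$. Since the restriction to the fibre $\li\times\{s\}$ can be performed in either order, one has $(\varphi_s)|_{\li}=(\varphi|_{T_\infty})_s=0$; moreover $\Fa_s$ and $\Fa'_s$ are torsion-free and trivial at infinity on $T_s\simeq\Sigma_n$ by condition $\vec{k}$. Hence Lemma \ref{lm2} applies on the fibre and yields $\varphi_s=0$ for every closed $s\in S$.

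Next I would upgrade this fibrewise vanishing to $\varphi=0$. Because $\Fa$ is coherent and flat on $S$, Lemma \ref{pro1} gives the base-change isomorphism $\lHom(\Fa,\Fa')_s\simeq\lHom(\Fa_s,\Fa'_s)$, under which the restriction $\varphi|_{T_s}$ of the section $\varphi$ corresponds to the fibrewise morphism $\varphi_s$; thus $\varphi|_{T_s}=0$ for all closed $s$. (The flatness of $\lHom(\Fa,\Fa')$ over $S$, furnished by Proposition \ref{teo1'}, is what makes this restriction behave well.) I would then localize at an arbitrary closed point $(x,s)\in T$: by the flat base change \eqref{trick} the germ $\varphi_{(x,s)}$ is the image of $(\varphi|_{T_s})_x\otimes 1$, and hence vanishes. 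Consequently the image of the associated map $\varphi\colon\Ol_T\to\lHom(\Fa,\Fa')$ has trivial stalk at every closed point, and by \cite[Lemma 2.8]{Eis} (exactly as in the proof of Lemma \ref{torsionfree}) it is the zero sheaf, so $\varphi=0$.

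The main obstacle is this last globalization step: fibrewise vanishing of a section does not in general force its vanishing, and the argument genuinely relies on the base-change isomorphism of Lemma \ref{pro1} (which rests on the flatness of $\Fa$ over $S$) together with the flat-base-change identity \eqref{trick}, in order first to identify $\varphi_s$ with the honest restriction of the section $\varphi$ and then to read off the vanishing of all its germs. Everything else is a formal transcription of the absolute case treated in Lemma \ref{lm2}.
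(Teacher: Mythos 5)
Your first step is sound and is exactly the paper's fibrewise input: for $\varphi\in\ker R$ and a closed point $s\in S$ one has $(\varphi_s)|_{\li}=0$, and since $\Fa_s$, $\Fa'_s$ are torsion-free and trivial at infinity, Lemma \ref{lm2} together with the base-change isomorphism of Lemma \ref{pro1} gives $\varphi_s=0$ for every closed $s$. The gap is in your globalization. The assertion that ``by \eqref{trick} the germ $\varphi_{(x,s)}$ is the image of $(\varphi|_{T_s})_x\otimes 1$'' is false: the natural comparison map runs the other way, $\Hl_{(x,s)}\to(\Hl_s)_x$ with $\varphi_{(x,s)}\mapsto(\varphi_s)_x$ (here $\Hl=\lHom(\Fa,\Fa')$), and the vanishing of the target only says $\varphi_{(x,s)}\in\mathfrak m_s\Hl_{(x,s)}$, not $\varphi_{(x,s)}=0$. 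Equation \eqref{trick} is an abstract isomorphism of modules; it does not identify the germ of a global section with $(\varphi_s)_x\otimes 1$. If it did, any section of any sheaf vanishing on a single fibre would have vanishing germs along that fibre, which already fails for $\Hl=\Ol_T$ with $S=\mathbb{A}^1$ and $\varphi$ the pullback of the coordinate $t$: this section restricts to zero on the fibre over $t=0$, yet its germ at every point of that fibre is nonzero. Your argument treats each closed point $(x,s)$ separately and so only ever uses the vanishing of $\varphi$ on the one fibre through that point, which can never suffice; the hypothesis ``$\varphi_s=0$ for \emph{all} closed $s$'' must enter through a genuinely global mechanism. It must, because without flatness the implication is simply false: for $\Hl=\Ol_T/(t^2)$ and $\varphi$ the class of $t$ one has $\varphi_s=0$ for all closed $s$ but $\varphi\neq0$. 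So the flatness of $\lHom(\Fa,\Fa')$ from Proposition \ref{teo1'}, which you quote but never actually exploit, has to do real work. (Note also that the template of Lemma \ref{torsionfree} does not carry over: there a stalkwise property of the \emph{module} is transported along \eqref{trick}; the vanishing of a \emph{section} is not such a property.)

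The paper supplies the missing global mechanism at the level of direct images. Since $\Fa$ and $\Fa'$ are locally free along $T_\infty$, one has $\lTor_i(\Hl,\Ol_{T_\infty})=0$ for $i>0$, so twisting the structure sequence of $T_\infty$ by $\Hl$ gives the exact sequence $0\to\Hl(-T_\infty)\to\Hl\to\Hl|_{T_\infty}\to0$ and hence the identification $\ker R=H^0\left(\Hl(-T_\infty)\right)=H^0\left(t_{2*}\left(\Hl(-T_\infty)\right)\right)$. The sheaf $\Hl(-T_\infty)$ is flat over $S$ by Propositions \ref{teo1} and \ref{teo1'}; your fibrewise computation (Lemmas \ref{pro1} and \ref{lm2}, in the twisted form $H^0\left(\lHom(\Fa_s,\Fa'_s)(-\li)\right)=0$) kills $H^0$ on every closed fibre; and then the Semicontinuity Theorem --- not a stalk-by-stalk argument --- forces $t_{2*}\left(\Hl(-T_\infty)\right)=0$, whence $\ker R=0$. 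In short: your step over each closed fibre coincides with the paper's, but the passage from fibrewise to global vanishing requires flatness plus semicontinuity applied to the direct image, and that step is absent from, and cannot be replaced by, your germ computation.
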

\begin{proof}
Let $\Hl=\lHom\left(\Fa,\Fa'\right)$. Since both $\Fa$ and $\Fa'$ are locally free along $T_\infty$ one gets $\lTor_i\left(\Hl,\Ol_{T_\infty}\right)=0$ for $i>0$ (see for example \cite[p.~700]{GH}). Thus, if we twist the structure sequence of the divisor $T_\infty$ by $\Hl$, we get
\begin{equation*}
\xymatrix{
0 \ar[r] & \Hl(-T_\infty) \ar[r] & \Hl \ar[r] & \Hl|_{T_\infty} \ar[r] & 0\,,
}
\end{equation*}
where $\Hl|_{T_\infty}\simeq\lHom\left(\Fa|_{T_\infty},\Fa'|_{T_\infty}\right)$. It follows that
\begin{equation*}
\ker R=H^0\left(\Hl(-T_\infty)\right)=H^0\left(t_{2*}\left(\Hl(-T_\infty)\right)\right).
\end{equation*}
By Propositions \ref{teo1} and the first claim in Proposition \ref{teo1'}, the sheaf $\Hl(-T_\infty)$ is flat on $S$. Moreover, the second claim in Proposition \ref{teo1'} and Lemma   \ref{lm2} yield the following vanishing result for all closed points $s\in S$:
\begin{equation*}
\begin{split}
H^0\left(\Hl(-T_\infty)_s\right)&=H^0\left(\lHom\left(\Fa_s,\Fa'_s\right)(-\li)\right)=0\,.
\end{split}
\end{equation*}
The Semicontinuity Theorem entails the vanishing of the sheaf $t_{2*}\left(\Hl(-T_\infty)\right)$. This ends the proof.
\end{proof}

\begin{cor}
\label{cor1}
The action of $\Gk$ on $\Pk$ is free.
\end{cor}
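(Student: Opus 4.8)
The plan is to show that the stabilizer of an arbitrary point $\theta\in\Pk$ under $\rho$ is trivial. I would start from the hypothesis $\rho(\theta,\bpsi)=\theta$ and work toward $\bpsi=\id_{\Gk}$. Writing $(\alpha,\beta)=\tau(\theta)$ and using that $\tau$ is $\Gk$-equivariant, I would apply $\tau$ to the equality $\rho(\theta,\bpsi)=\theta$ to get $\rho_0\big((\alpha,\beta),\bpsi\big)=(\alpha,\beta)$; by the formulas \eqref{eqrho0} this says precisely that $\psi\alpha=\alpha\phi$ and $\chi\beta=\beta\psi$, i.e.\ that $\bpsi=(\phi,\psi,\chi)$ is an automorphism of the monad $M(\alpha,\beta)$. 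In particular $\bpsi$ induces an automorphism $\Lambda$ of the full cohomology $\E_{\alpha,\beta}$; restricting to $\li$ recovers the isomorphism $\Lambda_\infty(\alpha,\beta;\bpsi)$ of \eqref{lambda}.

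Next I would exploit the defining formula \eqref{eqGonP}. Since $\theta$ is an isomorphism, the identity $\theta=\rho(\theta,\bpsi)=\theta\circ\Lambda_\infty(\alpha,\beta;\bpsi)^{-1}$ forces $\Lambda_\infty(\alpha,\beta;\bpsi)=\id_{\E_{\alpha,\beta,\infty}}$. I would then transport this equality from $\li$ to all of $\Sigma_n$: by Lemma \ref{lm2}, applied with $\E=\E'=\E_{\alpha,\beta}$, the restriction map $\Hom(\E_{\alpha,\beta},\E_{\alpha,\beta})\to\Hom(\E_{\alpha,\beta}|_{\li},\E_{\alpha,\beta}|_{\li})$ is injective. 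As both $\Lambda$ and $\id_{\E_{\alpha,\beta}}$ restrict to the identity on $\li$, injectivity yields $\Lambda=\id_{\E_{\alpha,\beta}}$.

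Finally, I would invoke the morphism-level refinement of Lemma \ref{pro3}: the cohomology functor $M\mapsto\E(M)$ induces a bijection $\Hom\big(M(\alpha,\beta),M(\alpha,\beta)\big)\stackrel{\sim}{\longrightarrow}\Hom(\E_{\alpha,\beta},\E_{\alpha,\beta})$, which is exactly what the proof of \cite[Lemma 4.1.3]{Ok} establishes. This restricts to an isomorphism from the monad-automorphism group — the stabilizer computed above — onto $\Aut(\E_{\alpha,\beta})$. Consequently $\Lambda=\id_{\E_{\alpha,\beta}}$ forces $\bpsi=\id_{\Gk}$, which is the asserted freeness.

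The substantive input, and the step I expect to be the main obstacle, is this last injectivity: a monad automorphism inducing the identity on cohomology must itself be trivial. This faithfulness rests on the vanishings $\Hom(\Vk,\Uk)=\Hom(\Wk,\Vk)=\Hom(\Wk,\Uk)=0$, all of which follow from Lemma \ref{lmVanH} given the explicit form \eqref{eqUVWk} of $\Uk,\Vk,\Wk$; checking these and organizing the homological bookkeeping of \cite{Ok} in the possibly non-locally-free case is where the real work lies, whereas the equivariance and restriction steps are formal.
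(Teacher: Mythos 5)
Your proposal is correct and takes essentially the same route as the paper: both arguments combine the injectivity of the restriction map from Lemma \ref{lm2} with the uniqueness/faithfulness statement of \cite[Lemma 4.1.3]{Ok} to pass from $\Lambda_\infty\left(\alpha,\beta;\bpsi\right)=\id_{\E_{\alpha,\beta,\infty}}$ (forced by $\rho(\theta,\bpsi)=\theta$ and the invertibility of $\theta$) to $\bpsi=\id_{\Gk}$. The only cosmetic difference is that you pass explicitly through $\Lambda=\id_{\E_{\alpha,\beta}}$ and then use injectivity of $\Hom\left(M(\alpha,\beta),M(\alpha,\beta)\right)\longrightarrow\Hom\left(\E_{\alpha,\beta},\E_{\alpha,\beta}\right)$ --- for which your listed $\Hom$-vanishings (verified via Lemma \ref{lmVanH}) indeed suffice, full bijectivity not being needed --- whereas the paper compresses the same chain of uniqueness assertions into one step.
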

\begin{proof}
Let $\left(\alpha,\beta;\bpsi\right)\in\Lk\times\Gk$, and put $\left(\alpha',\beta'\right)=\bpsi\cdot(\alpha,\beta)$. It follows from Lemma \ref{lm2} that a morphism $\Lambda\in\Hom\left(\E_{\alpha,\beta},\E_{\alpha',\beta'}\right) $ is fully determined by its restriction $\Lambda_\infty$ to $\ell_{\infty}$. It is not difficult to see \cite[Lemma 4.1.3]{Ok} that $\Lambda$ is induced by a unique isomorphism $\bpsi:M(\alpha,\beta)\longrightarrow M(\alpha',\beta')$ between the corresponding monads.

Whenever $\bpsi$ lies in the stabilizer of a point $\eta\in\Pk$, one has $\Lambda_\infty\left(\alpha,\beta;\bpsi\right)=\id_{\E_{\alpha,\beta,\infty}}$, where $(\alpha,\beta)=\tau(\eta)$. Since $\bpsi$ is uniquely determined, this implies $\bpsi=\id_{\Gk}$.
\end{proof}
\begin{prop}
\label{lmClos}
The graph $\Gamma$ of the action $\rho$ is closed in $\Pk\times\Pk$.
\end{prop}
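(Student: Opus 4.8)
The plan is to show that the graph
$$\Gamma=\{(\theta,\theta')\in\Pk\times\Pk : \theta'=\rho(\theta,\bpsi)\text{ for some }\bpsi\in\Gk\}$$
is closed by exhibiting it as the preimage of a closed set under a continuous (indeed morphism) map, using the identification of the $\Gk$-action with isomorphisms of monads established in Corollary~\ref{cor1}. The key observation is that, by Lemma~\ref{pro3} together with the freeness result in Corollary~\ref{cor1}, a pair $(\theta,\theta')$ lies in $\Gamma$ precisely when the two framed sheaves they determine are \emph{isomorphic as framed sheaves}. So the task reduces to showing that the locus of pairs of isomorphic framed sheaves is closed, and the cleanest route is to realize ``being isomorphic over $\li$ and extending to an isomorphism over $\Sigma_n$'' as a rank condition on a family of linear maps that varies algebraically over $\Pk\times\Pk$.

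First I would set up the relevant families over $\Pk\times\Pk$. Writing $q_1,q_2\colon\Pk\times\Pk\to\Pk$ for the two projections, I consider the two pulled-back sheaves $q_1^*\tE$ and $q_2^*\tE$ on $\Sigma_n\times(\Pk\times\Pk)$, both satisfying a relative version of condition $\vec{k}$. By Corollary~\ref{proInjInfty}, the relative $\lHom$ sheaf injects into its restriction at infinity, and by the absolute Lemma~\ref{lm2} each fibrewise $\Hom(\E,\E')$ embeds into $\End(\Com^r)$. The framings $\tTheta$ trivialize both sheaves along $\li$, so an isomorphism of the framed sheaves over a point $(\theta,\theta')$ is exactly a fibrewise homomorphism $\Lambda$ whose restriction $\Lambda_\infty$ equals the identity of $\Com^r$ under the framings. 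The plan is to push the relative $\lHom$ down to $\Pk\times\Pk$ and, after shrinking to an open cover where the relevant direct images are locally free, write the condition ``$\exists\,\Lambda$ with $R(\Lambda)=\id$ under the framings'' as the statement that a certain fixed vector (the image of $\id$) lies in the image of the finite-rank evaluation-at-infinity map $R$. Because $R$ is injective on global sections, solvability of $R(\Lambda)=\id$ is equivalent to an algebraic condition (vanishing of appropriate minors / membership in the image subbundle), which is a closed condition.

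The key steps, in order, are: (i) identify $(\theta,\theta')\in\Gamma$ with the existence of a framed-sheaf isomorphism $\E_{\tau(\theta)}\to\E_{\tau(\theta')}$, invoking Corollaries~\ref{cor1} and Lemma~\ref{pro3}; (ii) form the coherent sheaf $\Hl=\lHom(q_1^*\tE,q_2^*\tE)$ on $\Sigma_n\times(\Pk\times\Pk)$, note it is flat over the base by Proposition~\ref{teo1'}, and use Corollary~\ref{proInjInfty} so that its pushforward injects into the pushforward of its restriction to infinity; (iii) use the trivializations supplied by $\tTheta$ to interpret the target as a trivial bundle with fibre $\End(\Com^r)$ and the identity section $\id$ as a global section $\sigma$; (iv) on this trivial bundle the image of the injective morphism $R_*$ (pushforward of the restriction map) determines, fibrewise, a linear subspace of $\End(\Com^r)$, and $\Gamma$ is exactly the locus where $\sigma$ meets this subspace; (v) conclude closedness since membership of a fixed section in the image of a morphism of coherent sheaves is a closed condition, equivalently the vanishing locus of the induced section of the cokernel.

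The main obstacle I anticipate is step (iv): controlling the \emph{image} of $R_*$ as a family over $\Pk\times\Pk$. Coherent cokernels can jump in rank over the base, so I cannot assume the image is a subbundle globally; I expect to handle this by passing to the quotient $\Q=\coker(R_*)$ and characterizing $\Gamma$ as the zero locus of the image of $\sigma$ in $H^0$ of $\Q$-type data, which is closed regardless of rank jumping. A secondary technical point is ensuring the framing condition ``$\Lambda_\infty=\id$'' (not merely ``$\Lambda_\infty$ is some isomorphism'') is the one encoded; this is where the global framing $\tTheta$ and the equivariance built in Section~\ref{sectEtT} are essential, since they let me replace the varying fibre $\E_{\alpha,\beta,\infty}$ by the fixed $\Ol_{\li}^{\oplus r}$ and thereby treat $\id$ as a genuinely constant section. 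Once these are in place, closedness of $\Gamma$ follows formally, and this is precisely the input needed to separate orbits and give $\Mk=\Pk/\Gk$ its variety structure in Theorem~\ref{proMk}.
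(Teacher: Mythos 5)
Your steps (i)--(iii) track the paper's proof: it likewise characterizes $\Gamma$ by the existence of a fibrewise homomorphism restricting at infinity to (a multiple of) $\Lambda_\infty=(\theta')^{-1}\circ\theta$, forms the relative sheaf $\Kl=\lHom\left(q_{12}^*\tE,q_{13}^*\tE\right)$ on $\Sigma_n\times\Pk\times\Pk$, and uses Proposition \ref{teo1'} together with the injectivity of the restriction morphism. The genuine gap is in your steps (iv)--(v), and it is twofold. First, the principle you invoke in (v) is false: for a morphism of coherent sheaves over a base, the locus where a fixed section lies in the fibrewise image (equivalently, where the induced section of the cokernel vanishes fibrewise) is constructible but in general \emph{not} closed. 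On $S=\mathbb{A}^1$ take the morphism $\Ol_S\stackrel{t}{\longrightarrow}\Ol_S$ and the section $\sigma=1$: then $\sigma\otimes k(x)$ lies in the image of the fibre map exactly for $x\neq0$, an open non-closed set; the cokernel is the skyscraper $k(0)$ and the induced section vanishes fibrewise precisely on the complement of the origin. Solvability of a linear system with algebraically varying coefficients and fixed right-hand side has the \emph{wrong-way} semicontinuity, and passing to the cokernel does not cure rank jumping --- it is closed only when the cokernel is locally free, i.e.\ when the image is a subbundle, which is exactly what you cannot assume. Second, even before that, your reduction of the fibrewise condition ``there exists $\Lambda$ on the fibre with $R(\Lambda)=\id$'' to a membership condition between pushforward sheaves on $\Pk\times\Pk$ silently uses cohomology and base change: the natural map from the fibre of the direct image to $H^0$ of the fibre need not be surjective, and it fails precisely at points where $\hom\left(\E_{\alpha,\beta},\E_{\alpha',\beta'}\right)$ jumps up --- which are exactly the points of $\Gamma$ at issue. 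Your locus could therefore be strictly smaller than $\Gamma$, so even if it were closed this would not prove the Proposition.

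The paper avoids both problems by never pushing forward an image. It forms the sheaf $\Kl_\Omega$ on the \emph{total space} $\li\times\Pk\times\Pk$ as the kernel in the exact sequence \eqref{eqHl}, shows $\Kl_\Omega$ is flat over $\Pk\times\Pk$ (Proposition \ref{teo1'} together with \cite[Prop.~III.~9.1A.(e)]{Har}), identifies $H^0\left(\Kl_{\Omega,x}\right)\simeq\Hom_\Lambda\left(\E_{\alpha,\beta},\E_{\alpha',\beta'}\right)$ fibrewise via Lemma \ref{pro1}, and only then applies the Semicontinuity Theorem: since injectivity of $R$ gives the a priori bound $h^0\leq1$, one has $\Gamma=\left\{x:h^0\left(\Kl_{\Omega,x}\right)\geq1\right\}$, which is closed because $h^0$ of fibres of a flat family jumps \emph{up} on closed subsets. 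To repair your argument, replace (iv)--(v) by exactly this device: encode the framing condition (your constant section built from $\tTheta$ is the paper's $\Omega$) into the kernel sheaf $\Kl_\Omega$, and invoke upper semicontinuity of $h^0$ on the total space instead of membership in an image on the base.
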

\begin{proof}
Let $x=\left(\eta_{\alpha,\beta},\eta'_{\alpha',\beta'}\right)$ be  a point in $\Gamma$; by $\eta_{\alpha,\beta}$ we mean that $\eta$ belongs to the fibre over $(\alpha,\beta)$. One has
\begin{equation*}
\Lambda_\infty:=\left(\eta'_{\alpha',\beta'}\right)^{-1}\circ\eta_{\alpha,\beta}\in\Iso\left(\E_{\alpha,\beta,\infty},\E_{\alpha',\beta',\infty}\right)\,.
\end{equation*}
We define the vector space $\Hom_\Lambda\left(\E_{\alpha,\beta},\E_{\alpha',\beta'}\right)$ as the fibre product
\begin{equation}\label{square}
\xymatrix{
\Hom_\Lambda\left(\E_{\alpha,\beta},\E_{\alpha',\beta'}\right) \ar[r]^-i \ar[d]^j & \Com \ar[d]^-{\cdot\Lambda_\infty}\\
\Hom\left(\E_{\alpha,\beta},\E_{\alpha',\beta'}\right) \ar[r]^-{R} & \Hom\left(\E_{\alpha,\beta,\infty},\E_{\alpha',\beta',\infty}\right)\,,
}
\end{equation}
where $R$ is the restriction morphism to $\li$, and $\cdot\Lambda_\infty$ is the multiplication by $\Lambda_\infty$. Both morphisms $i$ and $j$ are injective, since $R$ is injective by Lemma \ref{lm2}, while $\cdot\Lambda_\infty$ is injective by the invertibility of $\Lambda_\infty$ \cite[Lemma 1.2]{Hi}.

Thus, $\Hom_\Lambda\left(\E_{\alpha,\beta},\E_{\alpha',\beta'}\right)$ is the subspace of homomorphisms between $\E_{\alpha,\beta}$ and $\E_{\alpha',\beta'}$ that at infinity reduce to multiples of $\Lambda_\infty$. By   \cite[Lemma 1.1]{Hi} one has the short exact sequence
\begin{multline}
0 \to 
\Hom_\Lambda\left(\E_{\alpha,\beta},\E_{\alpha',\beta'}\right) \to    \Com\oplus\Hom\left(\E_{\alpha,\beta},\E_{\alpha',\beta'}\right)  \\
\xrightarrow{(\cdot\Lambda_\infty,-(\cdot)|_{\li})}
\Hom\left(\E_{\alpha,\beta,\infty},\E_{\alpha',\beta',\infty}\right) \to 0 \,.
 \label{eqHomL}
\end{multline}
Since the morphism $i$ in the diagram \eqref{square} is injective, one has $\dim\Hom_\Lambda\left(\E_{\alpha,\beta},\E_{\alpha',\beta'}\right)\le 1$, 
so that $\Hom_\Lambda\left(\E_{\alpha,\beta},\E_{\alpha',\beta'}\right)$ is either zero or is generated by $\Lambda_\infty$. Hence,
\begin{equation}
\Gamma=\left\{\left.\left(\eta_{\alpha,\beta},\eta'_{\alpha',\beta'}\right)\in\Pk\times\Pk\,\right|\dim\Hom_\Lambda\left(\E_{\alpha,\beta},\E_{\alpha',\beta'}\right)=1\right\}\,.
\label{eqGamma}
\end{equation}
Let us consider the following product varieties, along with the associated canonical projections:
\begin{gather*}
\xymatrix{
\X:=\Sigma_n\times\Pk\times\Pk \ar@<1ex>[r]^-{q_{12}} \ar@<-1ex>[r]_-{q_{13}} \ar[rd]_-{q_{23}} & \tT\\
&\Pk\times\Pk  \,,
}\\
\xymatrix{
\Y:=\li\times\Pk\times\Pk \ar@<1ex>[r]^-{p_{12}} \ar@<-1ex>[r]_-{p_{13}} & \tTi\,.
}
\end{gather*}
One can pull-back the family $\left(\tE,\tTheta\right)$ to $\X$ in two different ways, getting $\left(q_{1i}^*\tE,p_{1i}^*\tTheta\right)$ with $i=2,3$. Out of these two pairs one defines
\begin{align*}
\Kl&=\lHom\left(q_{12}^*\tE,q_{13}^*\tE\right)\\
\Omega&=\left(p_{13}^*\tTheta\right)^{-1}\circ \left(p_{12}^*\tTheta\right)\in\Iso\left(p_{12}^*\tEi,p_{13}^*\tEi\right)\,.
\end{align*}
Since $q_{1i}^*\tE$ for $i=2,3$ are locally free along $\Y$, there is an isomorphism
\begin{equation*}
\Kl_\infty:=\lHom\left(p_{12}^*\tEi,p_{13}^*\tEi\right)\simeq\Kl|_{\Y}\,.
\end{equation*}
We introduce the sheaf $\Kl_{\Omega}$ by means of the exact sequence
\begin{equation}
\label{eqHl}
\xymatrix{
0 \ar[r] & \Kl_{\Omega} \ar[r] &
\Ol_\Y\oplus\Kl
\ar[rr]^-{
\left(
\cdot\Omega , -(\cdot)|_\Y
\right)
}
& &
\Kl_\infty \ar[r] & 0\,.
}
\end{equation}
By the second claim in Proposition \ref{teo1'}, for any point $x=\left(\eta_{\alpha,\beta},\eta'_{\alpha',\beta'}\right)\in\Pk\times\Pk$  the restriction of this sequence to the fibre of $q_{23}$ over $x$ is isomorphic to the sequence \eqref{eqHomL}.
In particular one gets  the isomorphism
\begin{equation}
H^0\left(\Kl_{\Omega,x}\right)\simeq\Hom_{\Lambda}\left(\E_{\alpha,\beta},\E_{\alpha',\beta'}\right).
\label{eqU}
\end{equation}
Since, by Proposition \ref{teo1'}, the sheaf $\Kl$ is flat on $\Pk\times \Pk$, the sheaf $\Kl_{\Omega}$ is flat on $\Pk\times\Pk$  as well \cite[Prop.~III.~9.1A.(e)]{Har}. Equation \eqref{eqU} and the Semicontinuity Theorem ensure that $\Gamma$, as characterized in \eqref{eqGamma}, is closed.
\end{proof}
The smooth algebraic varieties $\Pk$ and $\Gk$ have unique compatible structures of complex manifolds $\Pk^{an}$ and $\Gk^{an}$. Note that $\Gamma$ is closed in $\Pk^{an}\times\Pk^{an}$ as well.
\begin{cor}
The action of $\Gk^{an}$ on $\Pk^{an}$ is locally proper.
\end{cor}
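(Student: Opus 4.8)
The plan is to realize local properness as a statement about the action map and to deduce it from the two facts already established: the freeness of the action (Corollary \ref{cor1}) and the closedness of the graph $\Gamma$ (the preceding corollary). Write $\Psi^{an}\colon\Gk^{an}\times\Pk^{an}\to\Pk^{an}\times\Pk^{an}$ for the map $(\bpsi,\theta)\mapsto(\rho(\theta,\bpsi),\theta)$, so that $\im\Psi^{an}=\Gamma$. Recall that the action is \emph{locally proper} at a point $\theta_0$ if $\theta_0$ has a neighbourhood $V$ for which $\{\bpsi\in\Gk^{an}\colon \rho(V,\bpsi)\cap V\neq\emptyset\}$ is relatively compact; equivalently, whenever $\theta_n\to\theta$ and $\rho(\theta_n,\bpsi_n)\to\theta'$ in $\Pk^{an}$, the sequence $\{\bpsi_n\}$ admits a convergent subsequence. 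I would verify this sequential form directly.

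By Corollary \ref{cor1} the map $\Psi^{an}$ is injective, and since the stabilisers are trivial its differential is injective as well, so $\Psi^{an}$ is an injective holomorphic immersion; by the preceding corollary its image $\Gamma$ is closed in $\Pk^{an}\times\Pk^{an}$. The content of the sequential criterion is then exactly that $\Psi^{an}$ be a homeomorphism onto $\Gamma$: if so, a compact $K\subseteq\Pk^{an}\times\Pk^{an}$ meets $\Gamma$ in a compact set $K\cap\Gamma$, whose continuous preimage $(\Psi^{an})^{-1}(K\cap\Gamma)$ is compact, giving properness of $\Psi^{an}$ and hence (local) properness of the action. Thus everything reduces to showing that a convergent sequence in $\Gamma$ can be lifted, up to a subsequence, to a convergent sequence in $\Gk^{an}\times\Pk^{an}$.

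For this I would exploit the frame-bundle description of $\Pk$. Set $\eta_n=\rho(\theta_n,\bpsi_n)$ and $(\alpha_n,\beta_n)=\tau(\theta_n)$; by eq.~\eqref{eqGonP} one has $\Lambda_\infty(\alpha_n,\beta_n;\bpsi_n)=\eta_n^{-1}\circ\theta_n$ as isomorphisms of the fibres $\E_{\alpha_n,\beta_n,\infty}$. If $\theta_n\to\theta$ and $\eta_n\to\theta'$, the right-hand side converges to the isomorphism $(\theta')^{-1}\circ\theta$; in particular the monad isomorphisms $\Lambda_\infty(\alpha_n,\beta_n;\bpsi_n)$ converge to an isomorphism. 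Now by Lemma \ref{lm2} an isomorphism of cohomology sheaves is determined by, and conversely determines, its restriction at infinity, and by Lemma \ref{pro3} it is induced by a \emph{unique} monad isomorphism $\bpsi_n$; the reconstruction of $\bpsi_n=(\phi_n,\psi_n,\chi_n)$ from $\Lambda_\infty$ is continuous. Hence $\bpsi_n$ stays in a compact neighbourhood of the limiting isomorphism and subconverges to the unique $\bpsi_\infty$ with $\rho(\theta,\bpsi_\infty)=\theta'$, which is what we wanted.

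The main obstacle is precisely this last step: the closedness of $\Gamma$ guarantees only that the limit pair $(\theta',\theta)$ again lies in $\Gamma$, not that the group elements $\bpsi_n$ remain bounded, and a priori they could escape to infinity in $\Gk^{an}$. Ruling this out is where freeness is essential, through the unique and continuous reconstruction of $\bpsi$ from the isomorphism $\Lambda_\infty$ it induces: the convergence of $\Lambda_\infty(\alpha_n,\beta_n;\bpsi_n)$ to an isomorphism forbids any such escape. Once local properness is established, together with the Hausdorffness of the orbit space (itself a consequence of $\Gamma$ being closed) it provides the input for the construction of the quotient manifold structure on $\Mk$ asserted in Theorem \ref{proMk}.
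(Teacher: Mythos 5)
Your overall skeleton is the same as the paper's: the proof in the paper also runs on exactly the two inputs you identify, namely freeness of the action (Corollary \ref{cor1}) and closedness of the graph $\Gamma$ (Proposition \ref{lmClos}). Concretely, the paper fixes a compact neighbourhood $K_{\theta_0}$ of a point $\theta_0$, considers the injective map $\gamma_{\theta_0}\colon K_{\theta_0}\times\Gk^{an}\to\Pk^{an}\times\Pk^{an}$, $(\theta,\bpsi)\mapsto(\theta,\bpsi\cdot\theta)$, whose image is $\Gamma\cap\left(K_{\theta_0}\times\Pk^{an}\right)$, rewrites $\left(\rho|_{K_{\theta_0}\times\Gk^{an}}\right)^{-1}(K)$ as $\gamma_{\theta_0}^{-1}\left(\Gamma\cap\left(K_{\theta_0}\times K\right)\right)$, and concludes from the closedness of $\Gamma$; the properness of $\gamma_{\theta_0}$ onto its closed image is underwritten by the algebraic fact, invoked in the proof of Theorem \ref{proMk}, that the morphism $\Pk\times\Gk\to\Pk\times\Pk$ defined by $\rho$ is a closed immersion. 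Your sequential reformulation of this reduction is fine, and you correctly flag that closedness of $\Gamma$ alone does not prevent the group elements $\bpsi_n$ from escaping to infinity.

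The problem is that your fix for this — ``the reconstruction of $\bpsi_n$ from $\Lambda_\infty$ is continuous'' — is asserted, not proved, and it is essentially equivalent to the statement being proven: a continuous inverse of $(\theta,\bpsi)\mapsto(\theta,\bpsi\cdot\theta)$ over compacta \emph{is} local properness, so as written the argument is circular at its decisive step. Two substantive points are missing. First, the passage from $\Lambda_{\infty,n}\to(\theta')^{-1}\circ\theta$ to convergence of the extensions $\Lambda_n\in\Hom\left(\E_{\alpha_n,\beta_n},\E_{\alpha'_n,\beta'_n}\right)$: Lemma \ref{lm2} gives only injectivity of the restriction map $R$, and its inverse is defined on a family of subspaces of $\Hom$-spaces whose dimensions may jump as $(\alpha_n,\beta_n,\alpha'_n,\beta'_n)$ vary, so continuity of $R^{-1}$ along the sequence is not formal. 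This is exactly what the flat sheaf $\Kl_\Omega$ and the semicontinuity argument of Proposition \ref{lmClos} were built to control: since $\dim\Hom_\Lambda=1$ identically on $\Gamma$, Grauert's theorem makes the relevant direct image locally free of rank one near the limit point, producing a continuously varying generator, after which the framings fix the scalar normalization. Second, the continuity of the map $\Lambda\mapsto\bpsi$: the construction of \cite[Lemma 4.1.3]{Ok} (which, incidentally, is where the \emph{uniqueness} of the inducing monad isomorphism comes from, via Corollary \ref{cor1}, not Lemma \ref{pro3}) lifts $\Lambda$ through the display by solving linear systems with unique solutions, hence is indeed continuous in all the data — plausible, but it must be argued rather than assumed. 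A minor further gloss: the injectivity of the differential of your map $\Psi^{an}$ does not follow from set-theoretic freeness alone without the remark that stabilizers of a holomorphic action are closed complex subgroups, so that triviality forces the infinitesimal stabilizer to vanish.
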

\begin{proof}
Let $K_{\eta_0}$ be a compact neighbourhood  of a point $\eta_0\in\Pk^{an}$. We consider the morphism
\begin{equation*}
\begin{array}{rccl}
\gamma_{\eta_0}:&K_{\eta_0}\times\Gk^{an}&\longrightarrow&\Pk^{an}\times\Pk^{an}\\[5pt]
&\left(\eta;\bpsi\right)&\longmapsto&\left(\eta,\bpsi\cdot\eta\right)\,. \end{array}
\end{equation*}
Since the action of $\Gk^{an}$ is free, $\gamma_{\eta_0}$ is injective, so that its image is 
\begin{equation}
\im\gamma_{\eta_0}=\Gamma\cap\left(K_{\eta_0}\times\Pk^{an}\right)\,.
\label{eq20}
\end{equation}
We have to prove that the counterimage $\left(\rho|_{K_{\eta_0}\times\Gk^{an}}\right)^{-1}\!\!(K)$
of any compact subset $K\subset\Pk^{an}$ is compact. But it is easy to see that 
\begin{equation*}
\left(\rho|_{K_{\eta_0}\times\Gk^{an}}\right)^{-1}\!\!(K)=\gamma_{\eta_0}^{-1}\left(\Gamma\cap\left( K_{\eta_0}\times K\right)\right)\,.
\end{equation*}
As $\Gamma$ is closed by Proposition \ref{lmClos}, the thesis follows.
\end{proof}
We recall that an algebraic group $G$ is said to be \emph{special} if every locally isotrivial principal $G$-bundle is locally trivial \cite{Ser-EFA} (a fibration is said to be isotrivial if it is trivial in the \'etale topology).
\begin{lemma}
\label{lmspec}
The group $\Gk$ is special.
\end{lemma}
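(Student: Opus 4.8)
The plan is to prove that $\Gk = \Aut(\Uk)\times\Aut(\Vk)\times\Aut(\Wk)$ is special by exhibiting it as a group built out of factors already known to be special, together with the two standard permanence properties of the class of special groups: that it is closed under products, and that an extension of a special group by a special group is special (Serre, \cite{Ser-EFA}). First I would recall the key input from Serre's work that $\GL(m,\Com)$ is special for every $m$, and more generally that any connected solvable affine algebraic group, in particular the additive group $\mathbb{G}_a$ and the multiplicative group $\mathbb{G}_m$, is special; products of special groups are special; and if $1\to G'\to G\to G''\to 1$ is an exact sequence with $G'$ and $G''$ special, then $G$ is special.

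Since the class of special groups is closed under products, it suffices to show that each factor $\Aut(\Uk)$, $\Aut(\Vk)$, $\Aut(\Wk)$ is special. For $\Uk=\On(0,-1)^{\oplus k_1}$ and $\Wk=\On(1,0)^{\oplus k_3}$, which are direct sums of copies of a \emph{single} line bundle, any endomorphism is given by an arbitrary scalar matrix (since $\Hom(\On(p,q),\On(p,q))=H^0(\On)=\Com$), so $\Aut(\Uk)\simeq\GL(k_1,\Com)$ and $\Aut(\Wk)\simeq\GL(k_3,\Com)$; these are special. The factor $\Aut(\Vk)$ is the only one requiring care, because $\Vk=\On(1,-1)^{\oplus k_2}\oplus\On^{\oplus k_4}$ mixes two different line bundles.

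For the middle factor I would analyze $\End(\Vk)$ using the Hom-groups between the two summand types. Writing $\Vk=V_-\oplus V_+$ with $V_-=\On(1,-1)^{\oplus k_2}$ and $V_+=\On^{\oplus k_4}$, an endomorphism has a block form with blocks in $\End(V_-)$, $\Hom(V_+,V_-)$, $\Hom(V_-,V_+)$, and $\End(V_+)$. One checks via Lemma \ref{lmVanH} which Hom-groups vanish: $\Hom(\On,\On(1,-1))=H^0(\On(1,-1))$ and $\Hom(\On(1,-1),\On)=H^0(\On(-1,1))$. By Lemma \ref{lmVanH}, $H^0(\On(-1,1))=0$ (since $p=-1<0$), so the block $\Hom(V_-,V_+)$ vanishes, forcing every automorphism to be block \emph{lower-triangular}. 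This realizes $\Aut(\Vk)$ as an extension
\begin{equation*}
1 \longrightarrow N \longrightarrow \Aut(\Vk) \longrightarrow \GL(k_2,\Com)\times\GL(k_4,\Com)\longrightarrow 1\,,
\end{equation*}
where the quotient consists of the diagonal blocks (each diagonal block is a scalar matrix since $\End(V_\pm)$ is matrices over $H^0(\On)=\Com$), and the kernel $N$ is the unipotent radical consisting of the off-diagonal block $\Hom(V_+,V_-)=H^0(\On(1,-1))^{\oplus k_2 k_4}$, which is a vector group. The quotient $\GL(k_2,\Com)\times\GL(k_4,\Com)$ is special as a product of general linear groups, and the kernel $N$, being isomorphic to a product of copies of $\mathbb{G}_a$, is special as well. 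By the extension property, $\Aut(\Vk)$ is special, and hence so is the whole product $\Gk$.

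The main obstacle I anticipate is the bookkeeping in the middle factor: one must compute $h^0(\On(1,-1))$ explicitly (via Lemma \ref{lmVanH}, $H^0(\On(1,-1))\neq 0$ since $p=1\geq 0$ and $np+q=n-1\geq 0$ for $n\geq 1$, so this block is genuinely nonzero and contributes the unipotent radical), verify that the off-diagonal Hom in the other direction vanishes so that the triangular structure is correct, and confirm that the group of diagonal blocks is exactly $\GL(k_2,\Com)\times\GL(k_4,\Com)$ rather than something larger. Everything else is an immediate invocation of the permanence properties of special groups; the only genuinely geometric step is the cohomology vanishing, which is already packaged in Lemma \ref{lmVanH}.
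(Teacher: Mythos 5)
Your proof is correct and takes essentially the same route as the paper: both identify $\Aut(\Uk)$ and $\Aut(\Wk)$ with $\GL(k_1,\Com)$ and $\GL(k_3,\Com)$, realize the only nontrivial factor $\Aut(\Vk)$ as an extension of $\GL(k_2,\Com)\times\GL(k_4,\Com)$ by the vector group $\Hom\left(\On^{\oplus k_4},\On(1,-1)^{\oplus k_2}\right)\simeq\Com^{nk_2k_4}$ (this is exactly the paper's $H_{k_4,nk_2}$, since $h^0(\On(1,-1))=n$), and then invoke the results of Serre and Grothendieck that $\GL(m,\Com)$ and products of copies of the additive group are special together with closure of specialness under extensions. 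The only cosmetic difference is that the paper packages everything into a single exact sequence for all of $\Gk$ with kernel $H_{k_4,nk_2}$, whereas you handle $\Aut(\Vk)$ separately and conclude by closure under products.
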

\begin{proof}
For any two positive integers $p,q$, let $H_{p,q}$ be the subgroup of $\GL(p+q,\Com)$ whose elements are the matrices
\begin{equation*}
\begin{pmatrix}
\bm{1}_q & A\\
0 & \bm{1}_p
\end{pmatrix}
\;,\quad\text{where}\ A\in\Hom\left(\Com^p,\Com^q\right)\,.
\end{equation*}
This group is isomorphic to the direct product of copies of the additive group $\Com$, and therefore it is special \cite[Prop.~1]{Grot-spec}. We have
\begin{equation*}
\Gk\simeq\GL(k_1,\Com)\times\Aut\left(\Vk\right)\times\GL(k_3,\Com)\,,
\end{equation*}
where $\Aut\left(\Vk\right)$ can be embedded as a closed subgroup in $\GL(nk_2+k_4,\Com)$. Moreover $H_{k_4,nk_2}$ is a normal subgroup of $\Aut\left(\Vk\right)$, and we get the short exact sequence of groups:
\begin{equation*}
\xymatrix{
1 \ar[r] & H_{k_4,nk_2} \ar[r] & \Gk \ar[r] & \GL(k_1,\Com)\times\GL(k_2,\Com)\times\GL(k_4,\Com)\times\GL(k_3,\Com) \ar[r] & 1\,.
} 
\end{equation*}
Since the group $\GL(p,\Com)$ is special for any $p$ \cite[Thm.~2]{Ser-EFA}, it turns out that $\Gk$ is special as well \cite[Lemma 6]{Ser-EFA}.
\end{proof}
We have now all ingredients to prove Theorem \ref{proMk}.
\begin{proof}[Proof of Theorem \ref{proMk}]
Since $\Mk$ is defined as a quotient set, the canonical projection $\pu$ induces both the quotient topology, which makes $\Mk$ into a noetherian topological space, and a canonical structure of   ringed space (see for example \cite{Se}).

Let $\Mk^{an}:=\Pk^{an}/\Gk^{an}$ and let $\pu^{an}:\Pk^{an}\longrightarrow\Mk^{an}$ be the projection. Since the action of $\Gk^{an}$ on $\Pk^{an}$ is free and locally proper, \cite[Satz 24]{Hol} implies that $\Mk^{an}$ with its natural structure of   ringed space is a complex manifold.

We have a commutative diagram of   ringed spaces:
\begin{equation*}
\xymatrix{
\Pk^{an} \ar[r] \ar[d]^{\pu^{an}} & \Pk \ar[d]^{\pu}\\
\Mk^{an} \ar[r] & \Mk\,.
}
\end{equation*}
It follows plainly that $\Mk$ is an algebraic variety, and is smooth since $\Mk^{an}$ is (see \cite[p.~109]{Tay}).

Moreover, the morphism $\Pk\times\Gk\longrightarrow\Pk\times\Pk$ defined by  $\rho$ is a closed immersion, and $\Mk$ is a geometric quotient of $\Pk$ modulo $\Gk$. It follows from \cite[Prop 0.9]{Mum} that $\Pk$ is a principal $\Gk$-bundle over $\Mk$, in particular it is locally isotrivial. Lemma \ref{lmspec} says that $\Pk$ is actually locally trivial, and this completes the proof.
\end{proof}

We can now prove the Main Theorem \ref{teo2}.
\begin{proof}[Proof of the Main Theorem, first part]
From Corollary \ref{pro2}, if $k_1<0$ the set $\Mk$ is empty. \textit{Vice versa}, let $\vec{k}=(n,r,a,c)$ be such that $k_1\geq0$ and $ 0\le a < r$, and define the sheaf $\E_{\vec{k}}$ as follows:
\begin{equation*}
\E_{\vec{k}}=
\left\{
\begin{aligned}
&\I_Z& \qquad &\text{if $r=1$} \\[5pt]
&\I_Z\oplus\On(E)^{\oplus a}\oplus\On^{\oplus(r-a-1)}&\qquad&\text{if $r>1$,}
\end{aligned}
\right.
\end{equation*}
where $\I_Z$ is the ideal sheaf of a 0-dimensional subscheme $Z\subset\Sigma_n$ of length $k_1$, whose support does not intersect 
$\ell_\infty$. 
The sheaf $\E_{\vec{k}}$ has a natural framing at infinity. Its
 Chern character   is $(r,aE,-c-\frac12na^2)$. It follows that $\Mk$ is empty if and only if $k_1<0$.

By Theorem \ref{proMk}, $\Mk$ is a smooth algebraic variety, and its dimension can be computed from the dimensions of $\Lk$, $\GL(r,\Com)$ and $\Gk$.
\end{proof}

\medskip
\section{The universal family}
In this section we show that  the moduli space $\Mk$ is fine by constructing a universal family of framed sheaves on $\Sigma_n$, i.e.,   we prove the second part of the Main Theorem 
\ref{teo2}.
Let us define the varieties $\Tu=\Sigma_n\times\Mk$ and $\Ti=\li\times\Mk$, together with the canonical projections shown in the following diagram:
\begin{equation*}
\xymatrix{
\li \ar@{^{(}->}[r] & \Sigma_n\\
\Ti  \ar@{^{(}->}[r] \ar[u]_-{\uu_1} \ar[dr]_{\uu_2} & \Tu \ar[u]_{\tu_1} \ar[d]^{\tu_2}\\
 & \Mk\,.
}
\end{equation*}
Let $\q \colon \tu_2^\ast\Pk \to \Tu$ and $\pr \colon \uu_2^\ast\Pk \to \Ti$ be the natural projections; note that 
$ \tu_2^\ast\Pk\simeq \tT$, $ \uu_2^\ast\Pk \simeq \tTi$, $\q=\id_{\Sigma_n}\times\pu$, $\pr=\id_{\li}\times\pu$,
where $\pu:\Pk\longrightarrow\Mk$ is the quotient morphism.
We define the sheaf
\begin{equation*}
\Eu=\left(\q_*\tE\right)^G
\end{equation*}
{on $\Tu$,} where $()^G$ denotes taking invariants with respect to the action of $\Gk$ on $\Pk$.
\begin{prop}
\label{proEu}
$\Eu$ is a rank $r$ coherent sheaf, satisfying condition $\vec{k}$. Actually, for any point $\left[\eta\right]\in\Mk$ with $\tau(\eta)=(\alpha,\beta)$ one has the isomorphism
\begin{equation}
\left(\Eu\right)_{\left[\eta\right]}\simeq\E_{\alpha,\beta}\,.
\label{eq10'}
\end{equation}
Furthermore, by considering the restriction at infinity $\Eui:=\left.\Eu\right|_{\Ti}$ we get
\begin{equation}
\Eui\simeq\left(\pr_*\tEi\right)^G\,.
\label{eq11'}
\end{equation}
\end{prop}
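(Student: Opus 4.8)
The plan is to realize $\Eu$ as the descent of the $\Gk$-equivariant sheaf $\tE$ along the projection $\q$, and then to read off all the asserted properties from the resulting isomorphism $\q^{*}\Eu\simeq\tE$. The starting observation is that, since $\tu_2^{*}\Pk\simeq\tT$, the morphism $\q=\id_{\Sigma_n}\times\pu$ is the base change of $\pu\colon\Pk\to\Mk$ along $\tu_2$; by Theorem \ref{proMk} it is therefore a locally trivial principal $\Gk$-bundle, and in particular it is affine (as $\Gk$ is affine) and faithfully flat. The linearization $\Psi$ of $\tE$ constructed in Subsection \ref{sectEtT} endows $\tE$ with descent data for $\q$. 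The whole argument hinges on reducing to a Zariski-open $V\subseteq\Mk$ over which $\pu$ is trivial; this is exactly what Lemma \ref{lmspec} (specialness of $\Gk$) guarantees.

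First I would establish coherence together with the descent isomorphism. Over a trivializing open $V$, writing $W=\Sigma_n\times V$, one has $\q^{-1}(W)\simeq W\times\Gk$ with $\q$ the first projection, and the linearization identifies $\tE|_{W\times\Gk}$ with $\mathrm{pr}_W^{*}\F_0$ for a coherent sheaf $\F_0$ on $W$. Since $\q$ is affine, $\q_{*}\tE|_{W}\simeq\F_0\otimes_\Com\Com[\Gk]$ as a $\Gk$-representation via the regular action on $\Com[\Gk]$, whence $(\q_{*}\tE)^{G}|_{W}\simeq\F_0\otimes_\Com\Com[\Gk]^{G}\simeq\F_0$. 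Thus $\Eu|_{W}$ is coherent and $\q^{*}\Eu\simeq\tE$ over $\q^{-1}(W)$; as $\Psi$ is globally defined, these local identifications glue to a coherent sheaf $\Eu$ on $\Tu$ with a global isomorphism $\q^{*}\Eu\simeq\tE$. The rank of $\Eu$ is then $r$, since $\q$ is surjective and $\tE$ has rank $r$.

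Next I would verify condition $\vec{k}$. For flatness over $\Mk$, note that $\tE$ is flat over $\Pk$ by condition $\vec{k}$ on $\tT$ and that $\pu$ is flat; by transitivity $\tE\simeq\q^{*}\Eu$ is flat over $\Mk$, and since $\q$ is faithfully flat, flatness of $\Eu$ over $\Mk$ follows by descent. For the fibrewise statement \eqref{eq10'}, fix $\theta$ with $\pu(\theta)=[\theta]$ and restrict $\q^{*}\Eu\simeq\tE$ to the fibre $\Sigma_n\times\{\theta\}$; because $\q$ maps this fibre isomorphically onto $\Sigma_n\times\{[\theta]\}$, one obtains $(\Eu)_{[\theta]}\simeq(\tE)_\theta\simeq\E_{\tau(\theta)}=\E_{\alpha,\beta}$, the last isomorphism being \eqref{eqtEi}. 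Since $\tE$ satisfies condition $\vec{k}$, each such fibre is torsion-free, trivial at infinity, and has Chern character $(r,aE,-c-\tfrac12na^2)$; together with flatness this is precisely condition $\vec{k}$ for $\Eu$.

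Finally, for the restriction at infinity \eqref{eq11'}, I would use the Cartesian square $\tTi\simeq\Ti\times_{\Tu}\tT$, in which $\pr$ is the base change of $\q$. Because $\q$ is affine, pushforward commutes with base change along the closed immersion $\Ti\hookrightarrow\Tu$, giving a $\Gk$-equivariant isomorphism between the restriction of $\q_{*}\tE$ to $\Ti$ and $\pr_{*}\tEi$; restricting to a trivializing open and arguing as above, taking $\Gk$-invariants then yields $\Eui\simeq(\pr_{*}\tEi)^{G}$. The one delicate point throughout is that forming $\Gk$-invariants is only left exact and a priori does not commute with restriction to fibres or to $\Ti$; the way around this is precisely the Zariski-local triviality of $\q$, which over each $V$ turns $\tE$ into a pullback $\mathrm{pr}_W^{*}\F_0$ and makes every pushforward, restriction, and invariants computation transparent. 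This reduction to the trivial bundle is the crux of the proof; once it is in place, the remaining verifications are routine.
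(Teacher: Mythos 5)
Your proof is correct, but it takes a genuinely different route from the paper's. The paper deduces everything from the universal monad: Lemma \ref{lmLocUu} shows that the invariant pushforwards $\Uu$, $\Vu$, $\Wu$ are locally free with $\q^{*}\Uu\simeq\tU$ etc., Proposition \ref{ProUnivMon} assembles them into the monad $\Mo$ and proves $\Eu\simeq\E\left(\Mo\right)$ with $\q^{*}\Mo\simeq\tMo$; coherence and the rank then come from the monad, flatness from Lemma \ref{lmFlat} applied fibrewise (the key observation being $\left(\Au\right)_{[\theta]}=\alpha$, which is injective by condition (c1)), eq.~\eqref{eq10'} from eq.~\eqref{eqtEi}, and eq.~\eqref{eq11'} from the commutative diagram \eqref{eqMon}. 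You instead descend $\tE$ itself along the principal bundle $\q$, using the linearization $\Psi$ as descent data and the Zariski-local triviality of $\pu$ (Theorem \ref{proMk}, resting on Lemma \ref{lmspec}) to compute $\left(\q_{*}\tE\right)^{G}$ over trivializing opens as $\F_0\otimes_\Com\Com[\Gk]^{\Gk}\simeq\F_0$; flatness then comes by faithfully flat descent rather than by the monad criterion, and eq.~\eqref{eq11'} from affine base change along $\Ti\hookrightarrow\Tu$. Both arguments ultimately hinge on the same crucial input --- local triviality of $\Pk\to\Mk$, i.e.\ specialness of $\Gk$ --- which in the paper is hidden inside the equivariant trivializations of Lemma \ref{lmLocUu}. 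What your route buys: it is more direct and more general (any $\Gk$-linearized coherent sheaf on a locally trivial principal bundle descends this way), and it bypasses the somewhat delicate torsion-freeness argument used in Proposition \ref{ProUnivMon} to show that the injection $\iota\colon\mathscr{E}\lhra\Eu$ is invertible. What the paper's route buys: it produces the universal monad $\Mo$ and the identities $\q^{*}\left(\Au,\Bu\right)\simeq\left(\tA,\tB\right)$, which are not by-products of your descent but are needed later (in Lemmas \ref{lmMaoMa} and \ref{lm6}) to compare $f_\Sigma^{*}\Mo$ with $M(\Fa)$; so the monadic detour is not wasted effort in the paper's overall architecture. Two small points you should spell out if you expand the write-up: the identification $\tE|_{W\times\Gk}\simeq\mathrm{pr}_W^{*}\F_0$ should be made explicit by taking $\F_0=\tE|_{W\times\{e\}}$ and invoking the cocycle condition on $\Psi$; and the ``gluing'' step is in fact automatic, since $\left(\q_{*}\tE\right)^{G}$ is globally defined and your local computations merely verify that the canonical map $\q^{*}\left(\q_{*}\tE\right)^{G}\to\tE$ is an isomorphism.
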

We need  a few preliminary results. First, we take the monad $\tMo=\left(\id_{\Sigma_n}\times\tau\right)^*\dMo$ as in Proposition \ref{proGontMo}, and we let
\begin{equation*}
\tMo=\quad
\xymatrix{
0 \ar[r] & \tU \ar[r]^-{f_{A}} & \tV \ar[r]^-{f_{B}} & \tW \ar[r] & 0
}
\end{equation*}
(the morphism $\tau$ was defined in Equation \eqref{tau}).
Analogously, we let
\begin{equation*}
\tMoi=\quad
\xymatrix{
0 \ar[r] & \tUi \ar[r]^-{g_{A}} & \tV \ar[r]^-{g_{B}} & \tWi \ar[r] & 0\,.
}
\end{equation*}
We introduce the subsheaves
\begin{equation}
\begin{aligned}
\Uu&=\left(\q_*\tU\right)^G\,;&
\Vu&=\left(\q_*\tV\right)^G\,;&
\Wu&=\left(\q_*\tW\right)^G\,;\\
\Uui&=\left(\pr_*\tUi\right)^G\,;&
\Vui&=\left(\pr_*\tVi\right)^G\,;&
\Wui&=\left(\pr_*\tWi\right)^G\,,
\end{aligned}
\label{eqGinv}
\end{equation}
\begin{lemma}
\label{lmLocUu}
The sheaves $\Uu$, $\Vu$ and $\Wu$ are locally free of rank $k_1$, $k_2+k_4$ and $k_3$, respectively. Furthermore, there are isomorphisms
\begin{align*}
\q^{*}\Uu&\simeq\tU\,;&
\q^{*}\Vu&\simeq\tV\,;&
\q^{*}\Wu&\simeq\tW\,;\\
\pr^{*}\Uui&\simeq\tUi\,;&
\pr^{*}\Vui&\simeq\tVi\,;&
\pr^{*}\Wui&\simeq\tWi.
\label{eqiso}
\end{align*}
\end{lemma}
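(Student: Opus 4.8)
The plan is to read every assertion in the statement as an instance of faithfully flat descent along the principal bundle $\q$ (resp.\ $\pr$), and to reduce it to an explicit computation over trivializing opens. First I would observe that, by Theorem~\ref{proMk}, the projection $\pu\colon\Pk\to\Mk$ is a \emph{locally trivial} principal $\Gk$-bundle; pulling back along the flat projections $\Tu=\Sigma_n\times\Mk\to\Mk$ and $\Ti=\li\times\Mk\to\Mk$ shows that $\q\colon\tT\to\Tu$ and $\pr\colon\tTi\to\Ti$ are locally trivial principal $\Gk$-bundles as well. In particular $\q$ and $\pr$ are faithfully flat, and over an open $V\subseteq\Mk$ on which $\Pk$ trivializes one has $\q^{-1}(\Sigma_n\times V)\simeq\Sigma_n\times\Gk\times V$, with $\Gk$ acting only on the middle factor.

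Next I would make explicit the $\Gk$-equivariant structures carried by the terms of $\tMo$. By Proposition~\ref{proGontMo} the monad $\tMo$ is $\Gk$-equivariant, and by the construction recalled just before it (the morphism of monads $(u,v,w)\mapsto(\phi(u),\psi(v),\chi(w))$ for $\bpsi=(\phi,\psi,\chi)$) the action of $\Gk$ on $\tU=\ta_1^*\Uk$, $\tV=\ta_1^*\Vk$, $\tW=\ta_1^*\Wk$ is through the factors $\Aut(\Uk)$, $\Aut(\Vk)$, $\Aut(\Wk)$ respectively, acting fibrewise on $\Uk$, $\Vk$, $\Wk$ (and likewise for $\tUi,\tVi,\tWi$ on $\tTi$). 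Thus each of $\tU,\tV,\tW$ is a $\Gk$-equivariant locally free sheaf on the $\Gk$-torsor $\q$, of rank $\rk\Uk=k_1$, $\rk\Vk=k_2+k_4$ and $\rk\Wk=k_3$.

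Now I would invoke descent along torsors (equivalently, faithfully flat descent for quasi-coherent sheaves): for a principal $\Gk$-bundle the functor $\mathcal G\mapsto\q^*\mathcal G$ is an equivalence between $\Ol_{\Tu}$-modules and $\Gk$-equivariant $\Ol_{\tT}$-modules, with quasi-inverse $\F\mapsto(\q_*\F)^{\Gk}$. Applied to $\tU,\tV,\tW$ this yields at once $\q^*\Uu\simeq\tU$, $\q^*\Vu\simeq\tV$, $\q^*\Wu\simeq\tW$, and to $\tUi,\tVi,\tWi$ it yields $\pr^*\Uui\simeq\tUi$, $\pr^*\Vui\simeq\tVi$, $\pr^*\Wui\simeq\tWi$. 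The local freeness and the ranks then follow by faithfully flat descent: since $\q$ is faithfully flat and each $\q^*(\,\cdot\,)$ is locally free of the stated rank, so are $\Uu,\Vu,\Wu$ themselves. Concretely, one may bypass the general machinery and verify everything over a trivializing $V$: there $(\q_*\tU)^{\Gk}$ is computed by evaluating $\Gk$-invariant sections on the slice $g=e$, which identifies $\Uu|_{\Sigma_n\times V}$ with the pullback of $\Uk$ to $\Sigma_n\times V$, so that both local freeness and $\q^*\Uu\simeq\tU$ are transparent, and the equivariant structure guarantees these local isomorphisms glue to the global ones.

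The step I expect to be the main obstacle is precisely the identification $\q^*(\q_*\tU)^{\Gk}\simeq\tU$: it requires that forming $\Gk$-invariants commute with the base change $\q$ and that the descent datum encoded by the equivariant structure be effective. Both are ensured by the genuine (Zariski) local triviality of $\Pk\to\Mk$ established in Theorem~\ref{proMk} — which in turn rests on the specialness of $\Gk$ proved in Lemma~\ref{lmspec} — since this is exactly what allows the abstract descent to be replaced by the explicit unit-section computation. The rank bookkeeping (in particular $k_2+k_4$ for $\Vu$, reflecting $\Vk=\On(1,-1)^{\oplus k_2}\oplus\On^{\oplus k_4}$) and the verbatim repetition of the argument at infinity are then routine.
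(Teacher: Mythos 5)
Your proposal is correct and is essentially the paper's own argument: the paper likewise reduces everything to the Zariski local triviality of $\Pk\to\Mk$ (Theorem~\ref{proMk}, via specialness of $\Gk$), constructs explicit $\Gk$-equivariant trivializations of $\tU$, $\tV$, $\tW$ over opens of the form $U\times\pi^{-1}(V)$ to identify $(\q_*\tU)^{\Gk}$ locally with $\Ol^{\oplus k_1}$, and then checks that the natural morphism $\q^{*}\Uu\hookrightarrow\q^{*}(\q_{*}\tU)\to\tU$ is invertible on such opens --- which is exactly the ``unit-section'' computation you describe after the descent-theoretic framing. Your packaging of this as the torsor-descent equivalence $\F\mapsto(\q_*\F)^{\Gk}$ is a harmless conceptual wrapper around the same proof, and you correctly isolate the one genuinely load-bearing input, namely that local triviality (not mere isotriviality) of $\Pk$ is what makes the invariants commute with base change.
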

\begin{proof}
We prove the thesis for the sheaf $\tU$, since the other cases are analogous.

We claim that there exists an open cover of $\tu_{2}^{*}\Pk$ over which $\tU$ can be $\Gk$-equivariantly trivialized.  Let $V\subseteq\M^{n}(r,a,c)$ be an open subset over which $\Pk$ trivializes. This implies the existence of a morphism
\begin{equation*}
q^{-1}(\Sigma_{n}\times V)\ni(x,\eta) \longmapsto (\phi(x,\eta),\psi(x,\eta),\chi(x,\eta))\in\Gk\,.
\end{equation*}
This enables us to define an automorphism $h$ of $\tU$ by the equation
\begin{equation*}
\begin{array}{rccl}
h \colon&(\tU)_{(x,\eta)}&\longrightarrow&(\tU)_{(x,\eta)}\\
& u &\longmapsto&\phi(x,\eta).u
\end{array}\,.
\end{equation*}
At the same time, let $g\colon\Uk|_{U}\longrightarrow\Ol_{U}^{\oplus k_{1}}$ be a trivialization of $\Uk$ over a suitable open subset $U\subseteq\Sigma_{n}$. On the intersection $W=q^{-1}(\Sigma_{n}\times V)\cap\ta^{-1}(U)=U\times\pi^{-1}(V)$ it makes sense to pursue the composition $(\ta_{1}^{*}g)\circ h $, and it is easy to prove that this isomorphism is a $\Gk$-equivariant trivialization for $\tU$ over $W$. Since we can cover 
{$\tu_{2}^{*}\Pk$} with such open subsets the claim follows.

The equivariant trivializations induce isomorphisms such $\Uu|_{U\times V}\longrightarrow\left(\q_*\Ol_{W}^{\oplus k_{1}}\right)^G=\Ol_{U\times V}^{\oplus k_{1}}$, and this proves the first statement.

To get the second statement, consider the natural morphism
\begin{equation}
\q^{*}\Uu\lhra \q^{*}\left(\q_{*}\tU\right)\longrightarrow\tU\,.
\label{eqNatMor}
\end{equation}
By using equivariant trivializations one can show that the restriction of this morphism to all subsets like $W$ is invertible, and this completes the proof.
\end{proof}
\begin{rem}
\label{LmUuinfty}
Since the restriction morphism $\cdot|_{\tTi}$ is $\Gk$-equivariant, the triples of sheaves $\left(\Uu,\Vu,\Wu\right)$ on $\Tu$ and $\left(\Uui,\Vui,\Wui\right)$ on $\Ti$ are related by restriction at infinity, that is, $\Uui\simeq\left.\Uu\right|_{\Ti}$, $\Vui\simeq\left.\Vu\right|_{\Ti}$ and $\Wui\simeq\left.\Wu\right|_{\Ti}$.
\end{rem}
We introduce now the ``universal'' monad.
\begin{prop}
\label{ProUnivMon}
One has the following commutative diagram of monads on $\Tu$:
\begin{equation}
\xymatrix{
\Mo: & 0 \ar[r] & \Uu \ar[r]^{\Au} \ar[d]^{\cdot|_{\Ti}} & \Vu \ar[r]^{\Bu} \ar[d]^{\cdot|_{\Ti}} & \Wu \ar[r] \ar[d]^{\cdot|_{\Ti}} & 0\\
\Moi: & 0 \ar[r] & \Uui \ar[r]^{\Aui} & \Vui \ar[r]^{\Bui} & \Wui \ar[r] & 0
}
\label{eqMon}
\end{equation}
where
\begin{equation*}
\left\{
\begin{aligned}
\Au&:=\left.\left(\q_*f_{A}\right)\right|_{\Uu}\\[5pt]
\Bu&:=\left.\left(\q_*f_{B}\right)\right|_{\Vu}
\end{aligned}
\right.\qquad\text{and}\qquad
\left\{
\begin{aligned}
\Aui&:=\left.\left(\pr_*g_{A}\right)\right|_{\Uui}\\[5pt]
\Bui&:=\left.\left(\pr_*g_{B}\right)\right|_{\Vui}\,.
\end{aligned}
\right.
\end{equation*}
The sheaves $\Eu$ and $\Eui$ are isomorphic to the cohomologies of the monads $\Mo$ and $\Moi$, respectively.
\end{prop}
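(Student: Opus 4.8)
The plan is to obtain $\Mo$ and $\Moi$ by descending the $\Gk$-equivariant monads $\tMo$ and $\tMoi$ of Proposition \ref{proGontMo} along the quotient morphisms $\q=\id_{\Sigma_n}\times\pu$ and $\pr=\id_{\li}\times\pu$, using that $\pu\colon\Pk\to\Mk$ is a locally trivial principal $\Gk$-bundle (Theorem \ref{proMk}), so that $\q$ and $\pr$ are faithfully flat. First I would check that the horizontal morphisms are well defined: since $f_A,f_B$ are $\Gk$-equivariant, the push-forwards $\q_*f_A,\q_*f_B$ respect the $\Gk$-actions on $\q_*\tU,\q_*\tV,\q_*\tW$ and hence restrict to morphisms $\Au,\Bu$ between the invariant subsheaves $\Uu,\Vu,\Wu$ of \eqref{eqGinv}; the same yields $\Aui,\Bui$, and $\Bu\circ\Au=0$ is inherited from $f_B\circ f_A=0$. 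The terms are locally free of the asserted ranks by Lemma \ref{lmLocUu}.

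The key step is the compatibility $\q^*\Mo\simeq\tMo$. By its very definition $\Au=\left(\q_*f_A\right)|_{\Uu}$ sits in the commuting square formed with the inclusions $\Uu\hookrightarrow\q_*\tU$ and $\Vu\hookrightarrow\q_*\tV$; applying $\q^*$ and composing with the counit $\q^*\q_*\to\id$ — whose composite with these inclusions is precisely the isomorphism $\q^*\Uu\xrightarrow{\ \sim\ }\tU$ produced in Lemma \ref{lmLocUu} via the natural morphism \eqref{eqNatMor} — naturality identifies $\q^*\Au$ with $f_A$, and likewise $\q^*\Bu$ with $f_B$. This is pure naturality and requires no computation. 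Granting it, the monad conditions descend: since $\q$ is faithfully flat, $\q^*$ is exact and detects vanishing, so $\ker\Au$ and $\coker\Bu$ are zero because their pull-backs $\ker f_A=\q^*\ker\Au$ and $\coker f_B=\q^*\coker\Bu$ vanish ($\tMo$ being a monad); hence $\Au$ is a monomorphism, $\Bu$ an epimorphism, and $\Mo$ is a monad. The argument for $\Moi$ via $\pr$ is identical.

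For the cohomology, exactness of $\q^*$ gives $\E(\Mo)$ with $\q^*\E(\Mo)\simeq\E(\q^*\Mo)\simeq\E(\tMo)=\tE$, compatibly with the $\Gk$-action. By faithfully flat descent along the $\Gk$-torsor $\pu$ — under which a sheaf on the base is recovered as the invariant push-forward of its pull-back — one concludes $\E(\Mo)\simeq(\q_*\tE)^G=\Eu$, and likewise $\E(\Moi)\simeq(\pr_*\tEi)^G=\Eui$. Finally, the squares in \eqref{eqMon} commute because $g_A=f_A|_{\tTi}$, $g_B=f_B|_{\tTi}$, the restriction $\cdot|_{\tTi}$ is $\Gk$-equivariant, and $\Uu|_{\Ti}\simeq\Uui$, $\Vu|_{\Ti}\simeq\Vui$, $\Wu|_{\Ti}\simeq\Wui$ by Remark \ref{LmUuinfty}; thus restricting $\Au,\Bu$ to infinity returns $\Aui,\Bui$.

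The step deserving most care is the cohomology descent, namely verifying that $(\q_*-)^G$ inverts $\q^*$ for the torsor $\pu$; this is exactly where the local triviality of $\Pk\to\Mk$ (Theorem \ref{proMk}, itself resting on $\Gk$ being special) is used. Concretely, on the equivariant trivializing opens $W=U\times\pu^{-1}(V)$ of Lemma \ref{lmLocUu} the torsor splits and the claim reduces to the transparent case of a trivial $\Gk$-action, whence the global statement follows by gluing.
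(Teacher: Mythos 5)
Your proposal is correct, and up to the identification of the cohomology it follows the paper's proof step for step: the well-definedness of $\Au,\Bu,\Aui,\Bui$ via the $\Gk$-equivariance of $f_A,f_B$, the relation $\Bu\circ\Au=0$ by functoriality of $\q_*$, the surjectivity of $\Bu$ from $\q^*\Bu\simeq f_B$ and $\q^*\left(\coker\Bu\right)\simeq\coker f_B=0$ together with faithful flatness of $\q$, and the commutativity of the diagram via Remark \ref{LmUuinfty} are exactly the paper's arguments (for the injectivity of $\Au$ the paper is even cheaper than you: since $\q_*$ is left exact, $\Au$ is the restriction of the injective morphism $\q_*f_A$, so no descent is needed there). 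Where you genuinely diverge is the last step. You invoke the descent equivalence along the Zariski-locally trivial $\Gk$-torsor $\pu$: from the equivariant isomorphism $\q^*\E\left(\Mo\right)\simeq\tE$ you conclude $\E\left(\Mo\right)\simeq\left(\q_*\tE\right)^G=\Eu$ directly, verifying on trivializing opens that the unit $\F\to\left(\q_*\q^*\F\right)^G$ is invertible (which holds because the invariants of the regular representation of the affine group $\Gk$ are the constants). The paper instead constructs a natural injection $\iota\colon\E\left(\Mo\right)\lhra\Eu$ and proves it invertible through the comparison morphism $\varphi\colon\q^*\Eu\to\tE$: surjectivity of $\varphi$ comes from the triangle with the isomorphism $\kappa\colon\q^*\E\left(\Mo\right)\stackrel{\sim}{\longrightarrow}\tE$, invertibility along $\li\times\Pk$ comes from the fact that $\tTheta$ trivializes $\tEi$ equivariantly, a genericity argument (moving the divisor at infinity in its linear system) yields invertibility on a dense $\Gk$-invariant open, and torsion-freeness of $\tE$ (hence of $\Eu$ and $\q^*\Eu$) forces $\ker\varphi=0$; then $\q^*\iota$, and by faithful flatness $\iota$ itself, is an isomorphism. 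Your route is more conceptual and more uniform---it treats $\Mo$ and $\Moi$ identically and dispenses entirely with the framing/torsion-freeness/genericity argument---at the price of invoking the full torsor-descent equivalence rather than a single faithfully flat pull-back; both arguments ultimately rest on the same input, the Zariski-local triviality of $\Pk\to\Mk$ guaranteed by Theorem \ref{proMk} and Lemma \ref{lmspec}, which also underlies the equivariant trivializations of Lemma \ref{lmLocUu} that you and the paper both use to identify $\q^*\Au$ with $f_A$ and $\q^*\Bu$ with $f_B$.
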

\begin{proof}
The morphisms of $\Mo$ are well defined due to   the $\Gk$-equivariance of $f_{A}$ and $f_{B}$. The condition $\Bu\circ\Au=0$ follows from the functoriality of $\q_*$. The injectivity of $\Au$ is apparent since $\Au$ is the restriction of the injective morphism $\q_*f_{A}$.
 Lemma \ref{lmLocUu} implies $\q^*\Bu\simeq f_{B}$ and $\q^*\left(\coker\Bu\right)\simeq\coker f_{B}=0$; the vanishing of $\coker\Bu$ follows from the faithful flatness of $\q$.

The proof for $\Moi$ is analogous, and the commutativity of the diagram is an easy consequence of  Remark \ref{LmUuinfty}.

We prove now that $\Eu$  is the cohomology $\Mo$. The analogous result for $\Eui$ is proved in the same way. Let $\mathscr{E}$ be the cohomology of $\Mo$.  By applying Proposition \ref{proGontMo} to the display of $\Mo$ one obtains a natural injection $\iota\colon\mathscr{E}\lhra\Eu$.
We claim that $\iota$ is invertible. By pulling-back the display of $\Mo$ to {$\tu_{2}^{*}\Pk$} and   applying Lemma \ref{lmLocUu} one gets an isomorphism $\kappa\colon\q^{*}\mathscr{E}\longrightarrow\tE$ which fits into a commutative triangle
\begin{equation}
\xymatrix{
\q^{*}\mathscr{E} \ar[r]^-{\q^{*}\iota} \ar[dr]_{\kappa} & \q^{*}\Eu \ar[d]^{\varphi}\\
& \tE
}
\label{eqTrIota}
\end{equation}
where $\varphi$ is defined analogously to \eqref{eqNatMor}. 
Actually $\varphi$ is an isomorphism, as follows directly from the definition of $\Eu = (\q_\ast\tE)^{\Gk}$. Indeed, 
since the matter is local on $\Xi= \Sigma_n\times\Mk$, we can replace the latter with an open affine subset and apply Lemma A.6 in \cite{stackypeople} (note that $\q$ is faithfully flat).
We deduce from the diagram \eqref{eqTrIota} that $\q^{*}\iota$ is an isomorphism, and since $\q$ is faithfully flat,  $\iota$ is invertible as well. This completes the proof.
\end{proof}
\begin{proof}[Proof of Proposition \ref{proEu}]
The coherence of $\Eu$ follows from $\Eu\simeq\E\left(\Mo\right)$, and its rank can be computed easily from the ranks of $\Uu$, $\Vu$ and $\Wu$ given by Lemma \ref{lmLocUu}. 
From Lemma \ref{lmLocUu} and Proposition \ref{ProUnivMon} we get isomorphisms $\q^*\Mo\simeq\tMo$, $\q^*\Eu\simeq\tE$ and $\q^*\Au\simeq\tA$.
Note that $\left(\Au\right)_{\left[\eta\right]}=\alpha$ for all points $\left[\eta\right]\in\Mk$, with $\tau(\eta)=(\alpha,\beta)$. The flateness of $\Eu$ follows from \cite[Lemma 2.1.4]{HL-book} applied
to the analogue of the sequence \eqref{lemma2.2} for the monad $\Mo$. This is enough to show that $\Eu$ satisfies condition $\vec{k}$.

Finally, eq.~\eqref{eq11'} is a consequence of the commutativity of the diagram  \eqref{eqMon}.
\end{proof}
The morphism $\tTheta$ (cf.~subsection \ref{sectEtT}) provides a framing for the sheaf $\Eu$. Note that this morphism is $\Gk$-equivariant.\begin{defin}
We define the isomorphism $\Thu$ as the restriction of $\pr_*\tTheta$ to the $\Gk$-invariant subsheaf $\Eui$:
\begin{equation*}
\Thu:=\left.\left(\pr_*\tTheta\right)\right|_{\Eui}:\Eui\stackrel{\sim}{\longrightarrow}\Ol_{\Ti}^{\oplus r}\,.
\end{equation*}
We shall call $\Thu$ the universal framing.
\end{defin}

We show how to  associate a scheme morphism $f_{\left[(\Fa,\Theta)\right]}:S\longrightarrow\Mk$   with an isomorphism class $\left[(\Fa,\Theta)\right]$  of families of framed sheaves on $T=\Sigma_n\times S$.
We begin by describing some properties of the monad $M(\Fa)$.
\begin{lemma}
\label{lmAsBs}
Let $\Fa$ be a sheaf on $T$ which satisfies condition $\vec{k}$, and let $M(\Fa)$ be  a monad for it.
\begin{itemize}
 \item
For any closed point $s\in S$ there is an isomorphism of complexes
\begin{align}
M(\Fa)_s&\simeq M(\alpha(s),\beta(s))\,,
\label{eqMF}
\end{align}
where $(\alpha(s),\beta(s))=\left(A_s,B_s\right)\in\Lk$.
\item
The restriction $M(\Fa)_\infty$ of the monad $M(\Fa)$ to $T_\infty$ is  a monad, whose cohomology is isomorphic to $\Fa|_{T_\infty}$. For any closed point $s\in S$ there is an isomorphism
\begin{equation}
M(\Fa)_{\infty,s}\simeq\quad\xymatrix@C+1em{
  0 \ar[r] & \Ui \ar[r]^-{\alpha(s)|_{\li}} & \Vi \ar[r]^{\beta(s)|_{\li}} & \Wi \ar[r] & 0\,.
}
\label{eqMFi}
\end{equation}
\end{itemize}
\end{lemma}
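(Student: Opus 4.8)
The plan is to deduce both statements from the compatibility of the monad construction of Proposition \ref{teo1} with base change, the crucial input being the fibrewise vanishing supplied by Lemma \ref{lm3}. For the first statement \eqref{eqMF} I would argue as follows. The three terms of $M(\Fa)$ are built from the higher direct images $R^1t_{2*}\left[\Fa(p,q)\right]$ with $(p,q)\in\Il$, which the proof of Proposition \ref{teo1} shows to be locally free. Since each fibre $\Fa_s$ is torsion-free and trivial at infinity, Lemma \ref{lm3} gives $H^0(\Fa_s(p,q))=H^2(\Fa_s(p,q))=0$ for all $(p,q)\in\Il$; hence by the Cohomology and Base Change Theorem each $R^1t_{2*}\left[\Fa(p,q)\right]$ commutes with the base change to the closed point $s$, while $R^0t_{2*}$ and $R^2t_{2*}$ of these twists vanish. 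Restricting the whole construction of Proposition \ref{teo1}---including the extension \eqref{eqES} defining $\Va$ and the canonically determined morphisms $\ca,\da,A,B$---to the fibre over $s$ therefore reproduces verbatim the absolute construction of Corollary \ref{pro2} applied to $\Fa_s$. This identifies $M(\Fa)_s$ with $\Mek(\Fa_s)=M(\alpha(s),\beta(s))$, where $(\alpha(s),\beta(s))=(A_s,B_s)$; and because $\Fa_s$ is torsion-free and trivial at infinity, Proposition \ref{6prop3} guarantees that this point satisfies conditions (c1)--(c5), so that it lies in $\Lk$.

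For the second statement I would first note that $\Fa$ is locally free in a neighbourhood of $T_\infty$: local freeness of a sheaf flat over $S$ may be checked fibrewise, and each $\Fa_s$ is trivial, hence locally free, along $\li$. It follows that every sheaf in the display \eqref{eq44} of $M(\Fa)$---the cokernel $\coker A$, the kernel $\ker B$, and the cohomology $\E\simeq\Fa$---is locally free near $T_\infty$, since the short exact sequences of the display present each of them as an extension of sheaves locally free there. All the relevant higher $\lTor$ against $\Ol_{T_\infty}$ then vanish, so restricting the entire display to $T_\infty$ preserves exactness. This shows simultaneously that $M(\Fa)_\infty$ is again a monad and that its cohomology is $\Fa|_{T_\infty}$.

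The fibrewise isomorphism \eqref{eqMFi} then follows by combining the two arguments: restricting $M(\Fa)_\infty$ further to a closed point $s$ agrees with first restricting $M(\Fa)$ to $s$---which is $M(\alpha(s),\beta(s))$ by the first part---and then restricting to $\li$, yielding precisely the displayed complex with maps $\alpha(s)|_{\li}$ and $\beta(s)|_{\li}$. The main obstacle I anticipate is the careful bookkeeping required to check that base change commutes through every stage of the construction of Proposition \ref{teo1}, in particular through the extension \eqref{eqES} defining $\Va$ and through the connecting homomorphisms inherited from Buchdahl's resolution. Once the base-change compatibility of the sheaves $R^1t_{2*}\left[\Fa(p,q)\right]$ is secured via Lemma \ref{lm3}, however, the remaining identifications are formal.
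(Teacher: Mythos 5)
Your argument is correct, but it takes a genuinely different route from the paper's in both halves. For \eqref{eqMF} the paper does not re-run the construction of Proposition \ref{teo1} fibrewise: it simply restricts the relative monad, identifies its terms via Corollary \ref{corLocOnS} (after shrinking $S$ to a suitable affine neighbourhood of $s$), and reduces everything to the single nontrivial point --- injectivity of $A_s$ --- which it obtains by applying \cite[Lemma 2.1.4]{HL-book} to the exact sequence $0\to\Ua\to\ker B\to\Fa\to0$, using only the $S$-flatness of $\Fa$; membership of $(A_s,B_s)$ in $\Lk$ then follows from Proposition \ref{6prop3} exactly as you say. Your cohomology-and-base-change argument (the vanishing you need from Lemma \ref{lm3} does hold for all four twists in $\Il$, since $np+q\in\{-n,\,-n-1\}$ there) proves something slightly stronger, namely that $M(\Fa)_s$ is canonically the absolute monad $\Mek(\Fa_s)$, at the price of the spectral-sequence bookkeeping you yourself flag --- base change must be traced through Buchdahl's resolution and the extension \eqref{eqES}; the paper's route avoids all of this and is the more economical one, though yours makes the equality $(\alpha(s),\beta(s))=(A_s,B_s)$ conceptually transparent. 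For the second bullet the paper again leans on the first part: condition (c3) for $(\alpha(s),\beta(s))\in\Lk$ gives fibrewise injectivity of $\alpha(s)|_{\li}$, and Lemma \ref{lmInjSurj} then yields injectivity of $A_\infty$; your alternative --- local freeness of $\Fa$ near $T_\infty$ plus $\lTor$-vanishing, so that the whole display restricts exactly --- is equally valid (the paper itself invokes exactly this local freeness in the proof of Corollary \ref{proInjInfty}) and has the merit of explicitly identifying the cohomology of $M(\Fa)_\infty$ with $\Fa|_{T_\infty}$, a point the paper's proof leaves implicit. Two small imprecisions, neither fatal: Proposition \ref{6prop3} accounts only for (c3) and (c4) --- (c1) and (c2) hold because $\Mek(\Fa_s)$ is a monad, and (c5) because its cohomology is the torsion-free sheaf $\Fa_s$ --- and your step ``$\Fa_s$ trivial on $\li$, hence locally free along $\li$'' silently uses the standard but not completely obvious fact that a torsion-free sheaf whose restriction to a divisor is locally free can have no singular points on that divisor.
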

\begin{proof}
The proof splits in two steps.
1. 
Let $s\in S$ be any closed point. By Corollary \ref{corLocOnS}, there is an isomorphism
\begin{equation*}
M(\Fa)_s\simeq
\xymatrix{
\Uk \ar[r]^{A_s} & \Vk \ar[r]^{B_s} & \Wk\,.
}
\end{equation*}
It is enough to show that $A_s$ is injective. This is a consequence of the application of \cite[Lemma 2.1.4]{HL-book} to the short exact sequence.
\begin{equation*}
\xymatrix{
0 \ar[r] & \Ua \ar[r]^-A & \ker B \ar[r] & \Fa \ar[r] & 0\,.
}
\end{equation*}
Hence $M(\Fa)_s$ is a monad, whose cohomology is isomorphic to $\Fa_s$; the latter sheaf is torsion free and   trivial at infinity. Proposition \ref{6prop3} implies $\left(A_s,B_s\right)\in\Lk$.

2. Equation \eqref{eqMFi} follows from eq.~\eqref{eqMF}, since the condition $(\alpha(s),\beta(s))\in\Lk$ implies that $\alpha(s)|_{\li}$ is injective. By Lemma \ref{lmInjSurj} this condition ensures that $A_\infty$ is injective.
\end{proof}
\begin{rem}
Suppose that $(\Fa,\Theta)$  is a family of framed sheaves on $T$. For any closed point $s\in S$, let $\theta(s)=\Theta_s$ be the restricted framing. One has
\begin{equation*}
\theta(s)\in\Iso\left(\E_{\alpha(s),\beta(s),\infty},\Ol_{\Pu}^{\oplus r}\right)\,.
\end{equation*}
\end{rem}
We proceed with the construction of the morphism $f_{\left[(\Fa,\Theta)\right]}:S\longrightarrow\Mk$ by defining it first on closed points. We choose an open affine cover $S=\bigcup_{a\in\Al}S_a=\bigcup_{a\in\Al}\Spec\Sl_a$ where each $S_a$'s satisfies the conditions required of the scheme $S$  in  Corollary \ref{corLocOnS}.
Moreover, if $(A,B)$ is the pair of morphisms in the monad $M(\Fa)$, we introduce the following notation:
\begin{equation*}
\left(A_a,B_a,\Theta_a\right):=\left(A|_{\Sigma_n\times S_a},B|_{\Sigma_n\times S_a},\Theta|_{\li\times S_a}\right)\,.
\end{equation*}
Recall that $t_2:T\longrightarrow S$ is the projection. Note that by applying the functor $t_{2*}$ to the monad $M(\Fa)$ restricted to $\Sigma_n\times S_a$ we obtain a complex of trivial sheaves on $S_a$, so that
\begin{equation*}
(t_{2*}A_a,t_{2*}B_a)\in\Vek\otimes\Sl_a\simeq\Sl_a^{\oplus d}\,.
\end{equation*}
If we define $(\alpha_a(s),\beta_a(s))=\left(t_{2*}A_a,t_{2*}B_a\right)\otimes_{\Sl_a}k(s)$ we obtain the same morphisms as in Lemma \ref{lmAsBs}. We define a morphism $\fb_a:S_a\longrightarrow\Lk$ by letting $\fb_a(s)=(\alpha_a(s),\beta_a(s))$. We complete this to a scheme morphism by defining the ring homomorphism
\begin{equation*}
\fb_a^\sharp:\Com[z_1,\dots,z_d]\longrightarrow\Sl_a\\
\end{equation*}
which maps the polynomial $g$ to $\tilde{g}\left(A_a,B_a\right)$, where $\tilde{g}$ is the natural extension of $g$ to the ring $\Sl_a[z_1,\dots,z_d]$.

In the same way, we define $\theta_a(s)=u_{2*}\Theta\otimes_{\Sl_a}k(s)$. This allows us to lift the morphisms $\fb_a$ to morphisms $\tf_a:S_a\longrightarrow\Pk$ which we define on closed points as $\fb_a(s)=\theta_a(s)$. Again, these extend to scheme morphisms. By composing these morphisms with the projection $\pu:\Pk\longrightarrow\Mk$ we obtain morphisms $f_a:S_a\longrightarrow\Mk$, which glue to a morphism $f_{\left[(\Fa,\Theta)\right]}:S\longrightarrow\Mk$, since on overlaps the different $\tf_a$'s differ by the action of $\Gk$.

\subsection{Fineness}
In this section we prove that $\Mk$ represents a moduli functor, i.e., it is a fine moduli space. Let $\Sc$ be the category of noetherian reduced schemes of finite type over $\Com$ and $\Sets$ the category of sets.
\begin{defin}
For any vector $\vec{k}$ such that $k_1\geq0$ we introduce the contravariant functor $\Mod:\Sc\longrightarrow\Sets$ by the following prescriptions:
\begin{itemize}
 \item
for any object $S\in\Ob\left(\Sc\right)$ we define the set $\Mod(S)$ as
\begin{equation*}
\Mod(S):=\frac
{\{ \text{$S$-families $(\Fa,\Theta)$ of framed sheaves on $\Sigma_n$ } \}}
{\{\text{isomorphisms}\}}\,; 
\end{equation*}
\item
for any morphism $\varphi:S'\longrightarrow S$ we define the set-theoretic map
\begin{equation*}
\begin{array}{rccl}
\Mod(\varphi):&\Mod(S)&\longrightarrow&\Mod(S')\\
&\left[(\Fa,\Theta)\right]&\longmapsto&\left[(\varphi_\Sigma^*\Fa,\varphi_\infty^*\Theta)\right]\,, 
\end{array}
\end{equation*}
where $\varphi_\Sigma=\id_{\Sigma_n}\times\varphi$ and $\varphi_\infty=\id_{\li}\times\varphi$.
\end{itemize}
\end{defin}
(Notations such as $\varphi_\Sigma$ and $\varphi_\infty$ will be used repeatedly in the following.)
Observe that $\Mod\left(\Spec\Com\right)$ is the set underlying $\Mk$. The key property of this functor is the following.
\begin{prop}
\label{proFinMk}
The functor $\Mod(-)$ is represented by the scheme $\Mk$, that is, there is a natural isomorphism of functors
\begin{equation*}
\Mod(-)\simeq\Hom\left(-,\Mk\right)\,.
\end{equation*}
This implies that $\Mk$ is a fine moduli space of framed sheaves on $\Sigma_n$. The pair $\left(\Eu,\Thu\right)$ on $\Sigma_n\times\Mk$ is a universal family of framed sheaves on $\Sigma_n$.
\end{prop}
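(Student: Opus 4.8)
The plan is to construct a natural isomorphism $\Mod(-)\simeq\Hom(-,\Mk)$ by writing down a transformation in each direction and checking that their composites are the respective identities; naturality will then promote the fibrewise bijections to an isomorphism of functors. In one direction I use the classifying transformation $\eta_S\colon\Mod(S)\to\Hom(S,\Mk)$, $[(\Fa,\Theta)]\mapsto f_{[(\Fa,\Theta)]}$, already constructed above. In the other direction I use the pullback of the universal pair, $\Phi_S\colon\Hom(S,\Mk)\to\Mod(S)$, $g\mapsto[(g_\Sigma^{*}\Eu,g_\infty^{*}\Thu)]$; this is well defined because $\Eu$ satisfies condition $\vec{k}$ by Proposition~\ref{proEu} and both condition $\vec{k}$ and the framing are preserved under base change, and it is visibly natural in $S$ since pullback is functorial. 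It remains to prove $\eta_S\circ\Phi_S=\id$ and $\Phi_S\circ\eta_S=\id$ for every $S$.

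For $\eta_S\circ\Phi_S=\id$ I must recover $g\colon S\to\Mk$ from the isomorphism class of $(g_\Sigma^{*}\Eu,g_\infty^{*}\Thu)$. Since every scheme in $\Sc$ is reduced and $\Mk$ is separated, it suffices to check the equality $\eta_S(\Phi_S(g))=g$ on closed points. Fixing a closed point $s\in S$ and writing $g(s)=[\theta]$ with $\tau(\theta)=(\alpha,\beta)$, equation~\eqref{eq10'} gives $(g_\Sigma^{*}\Eu)_s\simeq(\Eu)_{g(s)}\simeq\E_{\alpha,\beta}$, and Lemma~\ref{lmAsBs} shows that the monad canonically attached to this fibre reproduces $(\alpha,\beta)\in\Lk$. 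Together with the framing this identifies the framed fibre with the one classified by $[\theta]$, and the set-theoretic bijection $\Mk=\Pk/\Gk$ (via Lemma~\ref{pro3}) gives $\eta_S(\Phi_S(g))(s)=[\theta]=g(s)$.

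The substantial direction is $\Phi_S\circ\eta_S=\id$: given a family $(\Fa,\Theta)$ with classifying map $f:=f_{[(\Fa,\Theta)]}$, I must produce a framing-preserving isomorphism $(f_\Sigma^{*}\Eu,f_\infty^{*}\Thu)\simeq(\Fa,\Theta)$. I work over the affine cover $S=\bigcup_a S_a$ used to build $f$. Over $S_a$ the morphism $f|_{S_a}$ lifts to $\tf_a\colon S_a\to\Pk$ with $\pu\circ\tf_a=f|_{S_a}$ and $\tau\circ\tf_a=\fb_a$, so, using $\q=\id_{\Sigma_n}\times\pu$ together with $\q^{*}\Eu\simeq\tE$ and $\tE=(\id_{\Sigma_n}\times\tau)^{*}\dE$ from Lemma~\ref{lmLocUu} and Proposition~\ref{ProUnivMon}, I obtain $(f_\Sigma^{*}\Eu)|_{\Sigma_n\times S_a}\simeq(\id_{\Sigma_n}\times\fb_a)^{*}\dE$. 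The right-hand side is the cohomology of the pulled-back monad $(\id_{\Sigma_n}\times\fb_a)^{*}\dMo$, whose pair of morphisms is, by the very construction of $\fb_a$, the pair $(A_a,B_a)$ defining $M(\Fa)|_{\Sigma_n\times S_a}$ under the trivialization of Corollary~\ref{corLocOnS}. Hence the two monads coincide under the canonical trivializations and their cohomologies agree, giving local isomorphisms $\Lambda_a\colon(f_\Sigma^{*}\Eu)|_{\Sigma_n\times S_a}\xrightarrow{\sim}\Fa|_{\Sigma_n\times S_a}$.

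The remaining, and hardest, step is to glue the $\Lambda_a$ into a single framing-preserving isomorphism. The decisive tool is Corollary~\ref{proInjInfty}: a homomorphism between families satisfying condition $\vec{k}$ is determined by its restriction at infinity. The lift $\tf_a$ was built from the restricted framing $\Theta|_{\li\times S_a}$ precisely so that each $\Lambda_a$ restricts at infinity to $\Theta^{-1}\circ f_\infty^{*}\Thu$; thus on every overlap the two local isomorphisms share the same restriction at infinity and, by the cited injectivity, coincide. The glued global $\Lambda$ is framing-preserving by construction, which proves $\Phi_S\circ\eta_S=\id$. The only genuine obstacle is exactly this passage from fibrewise (local, monad-level) isomorphisms to one global framing-preserving isomorphism, and it is the framing, through Corollary~\ref{proInjInfty}, that removes the gluing ambiguity. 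Once both composites are identities, $\Mod$ is represented by $\Mk$, so $\Mk$ is fine; the family corresponding to $\id_{\Mk}$ under the isomorphism is $(\Eu,\Thu)$, which is therefore the universal family.
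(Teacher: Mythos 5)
Your proof is correct and follows the same overall architecture as the paper's: the same pair of transformations ($\left[(\Fa,\Theta)\right]\mapsto f_{\left[(\Fa,\Theta)\right]}$ and $g\mapsto\left[\left(g_\Sigma^*\Eu,g_\infty^*\Thu\right)\right]$), the same local identification of $f_\Sigma^*\Mo$ with $M(\Fa)$ over the trivializing affine cover of Corollary \ref{corLocOnS}, and the same decisive use of Corollary \ref{proInjInfty} to glue the local isomorphisms through their common restriction at infinity --- this is exactly the paper's Lemma \ref{lmMaoMa}. You diverge in two organizational points, both legitimate. First, the paper proves naturality of \emph{both} transformations directly: its first lemma verifies $f_{\left[(\Fa,\Theta)\right]}\circ\varphi=f_{\left[\left(\varphi_\Sigma^*\Fa,\varphi_\infty^*\Theta\right)\right]}$ by an explicit computation with $\varphi^\sharp$ and the evaluation of polynomials. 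You instead prove naturality only for the pullback direction and recover naturality of the classifying direction formally, as the pointwise inverse of a natural isomorphism; this is a genuine shortcut that eliminates the paper's most computational lemma, with the small caveat that this explicit computation is also where the paper substantiates the framing compatibility (that $\tf_a$ pulls $\tTheta$ back to $\Theta_a$) which you assert ``by construction'' --- your assertion is at the same level of detail as the paper's own ``similarly the compatibility of $\Theta_{\vec{k}}$ can be shown.'' Second, in the direction $\Ma_S\circ\oMa_S=\id$ (the paper's Lemma \ref{lm6}) the paper lifts $g$ locally through a section $\sigma$ of $\Pk\to\Mk$ and compares monads via \cite[Lemma 4.1.3]{Ok}, while you reduce to closed points using that schemes in $\Sc$ are reduced and $\Mk$ is separated (it is a variety); this is valid over $\Com$ since closed points are very dense, and is arguably more transparent than the paper's terse fibrewise conclusion, though both ultimately rest on eq.~\eqref{eq10'}, Lemma \ref{lmAsBs} and the monad-uniqueness Lemma \ref{pro3}. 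Net effect: same proof skeleton and the same key lemmas, with a cleaner categorical packaging on your side.
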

We divide the proof of this Proposition in three Lemmas.

\begin{lemma}
Let $S$ be any scheme.
\begin{itemize}
 \item 
We define the map $\Ma_S$ as
\begin{equation*}
\begin{array}{rccl}
\Ma_S:&\Mod(S)&\longrightarrow&\Hom\left(S,\Mk\right)\\
&\left[(\Fa,\Theta)\right]&\longmapsto&f_{\left[(\Fa,\Theta)\right]}\,.
\end{array} 
\end{equation*}
\item
We define the map $\oMa_S$ as
\begin{equation*}
\begin{array}{rccl}
\oMa_S:&\Hom\left(S,\Mk\right)&\longrightarrow&\Mod(S)\\
&f&\longmapsto&\left[\left(f_\Sigma^*\Eu,f_\infty^*\Thu\right)\right]\,.
\end{array}
\end{equation*}
\end{itemize}
In this way we get natural tranformations $\Ma:\Mod(-)\longrightarrow\Hom\left(-,\Mk\right)$ and $\oMa:\Hom\left(-,\Mk\right)\longrightarrow\Mod(-)$.
\end{lemma}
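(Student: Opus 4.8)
The plan is to check four things: that $\Ma_S$ is well defined (independent of the chosen affine cover and of the auxiliary trivializations, and constant on framing-preserving isomorphism classes), that $\oMa_S$ is well defined, and that both assignments are natural in $S$. I expect the $\oMa$ side to be essentially formal and the $\Ma$ side, in particular its naturality, to carry the real content.

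First I would dispose of $\oMa$. Given $f\colon S\to\Mk$, the pair $(f_\Sigma^*\Eu,f_\infty^*\Thu)$ is a family of framed sheaves: flatness over $S$ is preserved by the base change $f_\Sigma$, and for a closed point $s\in S$ one has $(f_\Sigma^*\Eu)_s\simeq(\Eu)_{f(s)}$, which by Proposition \ref{proEu} is torsion-free, trivial at infinity, and of Chern character $(r,aE,-c-\frac{1}{2}na^2)$, while $f_\infty^*\Thu$ is then a framing; hence $\oMa_S$ lands in $\Mod(S)$. Naturality of $\oMa$ is pure functoriality of pullback: for $\varphi\colon S'\to S$ one has $(f\circ\varphi)_\Sigma=f_\Sigma\circ\varphi_\Sigma$ and $(f\circ\varphi)_\infty=f_\infty\circ\varphi_\infty$, so $\varphi_\Sigma^*f_\Sigma^*\Eu\simeq(f\circ\varphi)_\Sigma^*\Eu$ compatibly with the framings, which is exactly the naturality square $\Mod(\varphi)\circ\oMa_S=\oMa_{S'}\circ\Hom(\varphi,\Mk)$, where $\Hom(\varphi,\Mk)$ is precomposition with $\varphi$.

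Next I would check that $\Ma_S$ is well defined. The morphism $f_{[(\Fa,\Theta)]}$ was built by choosing an affine cover $S=\bigcup_a S_a$ as in Corollary \ref{corLocOnS}, producing lifts $\tf_a\colon S_a\to\Pk$ (landing in $\Pk$ by Lemma \ref{lmAsBs}) and setting $f_a=\pu\circ\tf_a$. On overlaps the two lifts differ by the $\Gk$-action on $\Pk$, so after composing with the $\Gk$-invariant projection $\pu$ they agree and glue; passing to a common refinement shows the result is independent of the cover and of the trivializations, since any ambiguity lives in $\Gk$ and is killed by $\pu$. To see that $f_{[(\Fa,\Theta)]}$ depends only on the class $[(\Fa,\Theta)]$, let $\Lambda\colon(\Fa,\Theta)\to(\Fa',\Theta')$ be a framing-preserving isomorphism; functoriality of $M(-)$ (Proposition \ref{teo1}) together with Lemma \ref{pro3} shows that the fibrewise data $(\alpha_a(s),\beta_a(s))$ and $(\alpha_a'(s),\beta_a'(s))$ differ by a unique element of $\Gk$ (uniqueness by Corollary \ref{cor1}), while framing-preservation matches the frames $\theta_a(s)$ and $\theta_a'(s)$ via the induced $\Lambda_\infty$; thus the two lifts lie in the same $\Gk$-orbit of $\Pk$ and define the same point of $\Mk$.

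The main obstacle is the naturality of $\Ma$, i.e.\ the identity $f_{[(\varphi_\Sigma^*\Fa,\varphi_\infty^*\Theta)]}=f_{[(\Fa,\Theta)]}\circ\varphi$ for every $\varphi\colon S'\to S$. Since $f_{[(\Fa,\Theta)]}$ is now known to be independent of all choices, it suffices to exhibit one choice of data on $S'$ pulled back from $S$ and to show that forming the monad commutes with the base change $\varphi_\Sigma$. The latter is the crux: each term of $M(\Fa)$ is built from direct images $R^1t_{2*}[\Fa(p,q)]$ for $(p,q)$ in the index set of Proposition \ref{teo1}, and by Lemma \ref{lm3} the groups $H^0$ and $H^2$ of these twists vanish on every fibre while the Euler characteristic is constant, so $R^1t_{2*}[\Fa(p,q)]$ is locally free and its formation commutes with arbitrary base change (as in the proof of Corollary \ref{corLocOnS}). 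Consequently $M(\varphi_\Sigma^*\Fa)\simeq\varphi_\Sigma^*M(\Fa)$, whence the associated local maps to $\Vek$ and the frames $\theta_a(s)$ pull back accordingly, so that the lifts satisfy $\tf_a\circ\varphi=(\text{the lift for }\varphi_\Sigma^*\Fa)$ up to $\Gk$; composing with $\pu$ yields the desired equality of morphisms to $\Mk$. The two natural transformations so obtained are the candidate mutually inverse isomorphisms whose invertibility is established in the subsequent Lemmas.
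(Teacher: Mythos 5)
Your proposal is correct, and its mathematical core is the same as the paper's: both arguments for the naturality of $\Ma$ ultimately rest on the compatibility of the monad construction with base change, $M(\varphi_\Sigma^*\Fa)\simeq\varphi_\Sigma^*M(\Fa)$, which you justify (as the paper does implicitly via Corollary \ref{corLocOnS}) by the fibrewise vanishing of Lemma \ref{lm3} and cohomology-and-base-change for the sheaves $R^1t_{2*}\left[\Fa(p,q)\right]$; and both treat $\oMa$ as formal pullback functoriality (you add the routine check, via Proposition \ref{proEu}, that the pullback pair is a family, which the paper omits). The packaging, however, is genuinely different. The paper reduces at once to $S=\Spec\Sl$, $S'=\Spec\Sl'$, records the identity $(A',B')=(\varphi^\sharp)^{\oplus d}(A,B)$, and then proves the \emph{on-the-nose} equality of lifts $\tf\circ\varphi=\tf'\colon S'\to\Pk$, the framing part being handled by the explicit module computation $N'\simeq N\otimes_{\Sl}\Sl'$, $\varphi_\infty^*\Theta\simeq\Theta\otimes_{\Sl}1_{\Sl'}$ and the evaluation identity; because the lifts agree exactly, no appeal to independence of choices is needed. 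You instead first establish that $\Ma_S$ is well defined --- independence of the cover and trivializations (the ambiguity is a morphism into $\Gk$, killed by $\pu$) and invariance under framing-preserving isomorphisms (via functoriality of $M(-)$, Lemma \ref{pro3}, and the uniqueness of the inducing monad isomorphism from the proof of Corollary \ref{cor1}) --- and then obtain naturality by matching the pulled-back data only \emph{up to} the $\Gk$-action. This decomposition buys something real: it makes explicit a point the paper leaves tacit, namely that $f_{\left[(\Fa,\Theta)\right]}$ depends only on the isomorphism class, without which $\Ma_S$ is not even defined on $\Mod(S)$. Two small points where you should be a touch more careful: the preimages $\varphi^{-1}(S_a)$ need not be affine, so they must be refined before Corollary \ref{corLocOnS} applies; and your equalities of morphisms to $\Mk$ are first obtained on closed points, so the upgrade to equalities of scheme morphisms uses that all schemes in play are reduced (and the targets separated) --- precisely the gap that the paper's ring-level computation closes by construction.
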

\begin{proof}
The naturality of $\oMa$ is straightforward since
\begin{equation*}
\left[\left(\varphi_\Sigma^*f_\Sigma^*\Eu,\varphi_\infty^*f_\infty^*\Thu\right)\right]=\left[\left(\left(f\circ\varphi\right)_\Sigma^*\Eu,\left(f\circ\varphi\right)_\infty^*\Thu\right)\right]\,,
\end{equation*}
whenever a composition of morphisms $\xymatrix{S' \ar[r]^-\varphi & S \ar[r]^-f &\Mk}$ is given.

To prove the naturality of $\Ma$ we need to show that, given any morphism $\xymatrix{S' \ar[r]^\varphi & S}$ and any family $\left(\Fa,\Theta\right)$  on $T$, the following equality holds true:
\begin{equation}
f_{\left[\left(\Fa,\Theta\right)\right]}\circ \varphi=f_{\left[\left(\varphi_\Sigma^*\Fa,\varphi_\infty^*\Theta\right)\right]}\,. \label{eqMa}
\end{equation}
To simplify the notation we let $f=f_{\left[\left(\Fa,\Theta\right)\right]}$ and $f'=f_{\left[\left(\varphi_\Sigma^*\Fa,\varphi_\infty^*\Theta\right)\right]}$. We can assume  $S=\Spec\Sl$ and $S'=\Spec\Sl'$, so that $\varphi$ is induced by a ring homomorphism $\xymatrix{\Sl \ar[r]^-{\varphi^\sharp} & \Sl'}$.
If $(A,B)$ and $\left(A',B'\right)$ are the morphisms in the monads $M(\Fa)$ and $M\left(\varphi^*\Fa\right)$, respectively, one has
\begin{equation*}
\left(A',B'\right)=\left(\varphi^\sharp\right)^{\oplus d}(A,B)
\end{equation*}
and $\fb\circ\varphi=\fb':S'\longrightarrow\Lk$.
We can assume that $\im\fb'$ is contained in  an open affine subset $V=\Spec\Ll$ of $\Lk$ which  trivializes $\hN$, and we put $N_{\vec{k}}=\Gamma\left(V,\hN\right)$ which turns out to be a free $\Ll$-module of rank $r$.

We put $N=\Gamma\left(T_\infty,\Fa|_{T_\infty}\right)$ which is a free $\Sl$-module of rank $r$, and similarly we put $N'=\Gamma\left(T'_\infty,\varphi_\infty^*\left(\Fa|_{T_\infty}\right)\right)$ which is a free $\Sl'$-module of the same rank; By the homomorphisms $f^\sharp$ and $ \varphi^\sharp$ one has $N\simeq N_{\vec{k}}\otimes_\Ll\Sl$ and $N'\simeq N\otimes_\Sl\Sl'$. Moreover one has $\varphi_\infty^*\Theta\simeq\Theta\otimes_\Sl1_{\Sl'}$.

For any polynomial $h\in\Sym_{\Ll}\left(H^*\right)$, where $H\simeq\Hom_\Ll\left(N_{\vec{k}},\Ll^{\oplus r}\right)$, we get
\begin{equation*}
\begin{split}
\ev_{\varphi_\infty^*\Theta}\left(h\otimes_\Ll1_{\Sl'}\right)&=\ev_{\left(\Theta\otimes_\Sl1_{\Sl'}\right)}\left[\left(h\otimes_\Ll1_{\Sl}\right)\otimes_\Sl1_{\Sl'}\right]=\\
&=\left[\ev_\Theta\left(h\otimes_\Ll1_{\Sl}\right)\right]\otimes_\Sl1_{\Sl'}
\end{split}
\end{equation*}
where $\ev$ is the evaluation of polynomials.
Hence
\begin{equation*}
\tf\circ\varphi=\tf':S'\longrightarrow\Pk\,.
\end{equation*}
By applying the projection $\pu$ to both sides of this equation, we obtain eq.~\eqref{eqMa}.
\end{proof}

\begin{lemma}
\label{lmMaoMa}
For any scheme $S$ one has $\oMa_S\circ\Ma_S=\id_{\Mod(S)}$.
\end{lemma}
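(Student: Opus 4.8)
The plan is to show that pulling back the universal family $(\Eu,\Thu)$ along the classifying morphism $f:=f_{\left[(\Fa,\Theta)\right]}$ recovers $(\Fa,\Theta)$ up to framing-preserving isomorphism, which is exactly the assertion $\oMa_S\circ\Ma_S=\id_{\Mod(S)}$. I would work over the affine cover $S=\bigcup_a S_a$ used in the construction of $f$, on which $f_a$ lifts to $\tf_a\colon S_a\to\Pk$ with $f_a=\pu\circ\tf_a$ and $\tau\circ\tf_a=\fb_a$. First I would trace the sheaf part. Since over $\Sigma_n\times S_a$ one has $f_\Sigma=\q\circ(\id_{\Sigma_n}\times\tf_a)$, the isomorphism $\q^{*}\Eu\simeq\tE$ (Lemma \ref{lmLocUu} and Proposition \ref{ProUnivMon}) together with $\tE=(\id_{\Sigma_n}\times\tau)^{*}\dE$ gives
\[
f_\Sigma^{*}\Eu\big|_{\Sigma_n\times S_a}\simeq(\id_{\Sigma_n}\times\tf_a)^{*}\tE\simeq(\id_{\Sigma_n}\times\fb_a)^{*}\dE.
\]
Now $\dE$ is the cohomology of the universal monad $\dMo$, whose morphisms are the tautological pair $(\alpha,\beta)$, and $\fb_a$ was defined precisely so that the pullback of these tautological morphisms is the pair $(A_a,B_a)$ of the monad $M(\Fa)$ over $\Sigma_n\times S_a$ (using the trivialization of Corollary \ref{corLocOnS}). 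Hence $(\id_{\Sigma_n}\times\fb_a)^{*}\dMo\simeq M(\Fa)|_{\Sigma_n\times S_a}$, and passing to cohomology (which commutes with this base change because $\dE$ is flat over $\Lk$, so the restricted left-hand morphisms stay injective by Lemma \ref{lmFlat}) yields $f_\Sigma^{*}\Eu|_{\Sigma_n\times S_a}\simeq\Fa|_{\Sigma_n\times S_a}$.

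Next I would match the framings. The lift $\tf_a$ was built so that $u_{2*}\Theta$ records the framing of $(\Fa,\Theta)$ as a point of the frame bundle $\Pk$ of $\hN$; since $\Thu=(\pr_{*}\tTheta)|_{\Eui}$ is the tautological trivialization, pulling it back along $f_\infty=\pr\circ(\id_{\li}\times\tf_a)$ returns $\Theta$. Concretely, on $\li\times S_a$ the isomorphism produced above restricts to an isomorphism of trivial bundles intertwining $f_\infty^{*}\Thu$ with $\Theta$, so the local isomorphisms $\Lambda_a\colon\Fa|_{\Sigma_n\times S_a}\xrightarrow{\sim}f_\Sigma^{*}\Eu|_{\Sigma_n\times S_a}$ are framing-preserving.

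The final step is gluing. The $\Lambda_a$ are a priori defined only over the $S_a$ and, because the local trivializations of Corollary \ref{corLocOnS} involve choices differing by $\Gk$, need not agree on overlaps as maps of terms; but each $\Lambda_a$ is compatible with the framings, and by Corollary \ref{proInjInfty} a homomorphism between two such families is determined by its restriction at infinity. Framing-compatibility therefore forces $\Lambda_a=\Lambda_b$ on $S_a\cap S_b$, so the $\Lambda_a$ glue to a global framing-preserving isomorphism $(\Fa,\Theta)\simeq(f_\Sigma^{*}\Eu,f_\infty^{*}\Thu)$, which is the desired identity $\oMa_S\circ\Ma_S=\id_{\Mod(S)}$.

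I expect the main obstacle to be the bookkeeping in the first step: verifying that the pullback of the tautological monad along $\fb_a$ reproduces $M(\Fa)$ and that taking cohomology commutes with this base change — this is exactly where flatness of $\dE$ over $\Lk$ and the injectivity criterion of Lemma \ref{lmFlat} are needed. Once that is in place, the gluing is a formal consequence of the rigidity supplied by Corollary \ref{proInjInfty}.
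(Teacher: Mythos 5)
Your proposal is correct and follows essentially the same route as the paper: the paper likewise reduces the claim to the local isomorphism of monads $f_\Sigma^*\Mo\simeq M(\Fa)$ over the affine cover, verifies it by tracing $f_\Sigma^*(\Au,\Bu)$ through the lift $\tf$ and the tautological pair on $\Lk$ (exactly your chain of identifications via $\q$, $\tau$ and $\fb_a$), and then glues the local framing-compatible isomorphisms using the rigidity of Corollary \ref{proInjInfty}. Your explicit remark that flatness (Lemma \ref{lmFlat}) lets cohomology commute with the base change is a point the paper leaves implicit, but it is consistent with its argument.
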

\begin{proof}
We need to prove that
\begin{equation*}
\left(f_\Sigma^*\Eu,f_\infty^*\Thu\right)\simeq\left(\Fa,\Theta\right)\,,
\end{equation*}
for any $S$-family $\left(\Fa,\Theta\right)$  of framed sheaves on $\Sigma_n$, where $f=f_{\left[\left(\Fa,\Theta\right)\right]}$. It is enough to show that there is an isomorphism
\begin{equation}
f_\Sigma^*\Mo\simeq M(\Fa)
\label{eqeq}
\end{equation}
and that this isomorphism is compatible with the framings. By Lemma \ref{lmLocUu} there are isomorphisms $f_\Sigma^*\Uu\simeq t_1^*\Uk$, $f_\Sigma^*\Vu\simeq t_1^*\Vk$, $f_\Sigma^*\Wu\simeq t_1^*\Wk$, together  with
 $\left(\q^*\Au,\q^*\Bu,\pr^*\Thu\right)$ $\simeq\left(\tA,\tB,\tTheta\right)$.
When $S$ is affine and satisfies the conditions of Corollary \ref{corLocOnS}, we have in addition the isomorphisms $\Ua\simeq t_1^*\Uk$, $\Va\simeq t_1^*\Vk$ and $\Wa\simeq t_1^*\Wk$, and we have
\begin{equation*}
\begin{split}
f_\Sigma^*\left(\Au,\Bu\right)&=\tilde{f}_\Sigma^*\q^*\left(\Au,\Bu\right)=\tilde{f}_\Sigma^*\left(\tA,\tB\right)=\tilde{f}_\Sigma^*\ta_2^*\tau^*\left(\id_{\Lk}\right)=t_2^*\tf^*\tau^*\left(\id_{\Lk}\right)\\
&=t_2^*\fb^*\left(\id_{\Lk}\right)=(A,B)\,.
\end{split}
\end{equation*}
where $\tilde{f}_\Sigma=\id_{\Sigma_n}\times\tilde{f}$.
This proves eq.~\eqref{eqeq} locally, and similarly the compatibility of $\Theta_{\vec{k}}$ can be shown. By Corollary \ref{proInjInfty} we get the thesis.
\end{proof}

\begin{lemma}
\label{lm6}
For any vector $\vec{k}$ such that $k_1\geq0$ and for any scheme $S$ one has
\begin{equation*}
\Ma_S\circ\oMa_S=\id_{\Hom\left(S,\Mk\right)}\,.
\end{equation*}
\end{lemma}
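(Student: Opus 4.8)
The plan is to reduce the statement to the single case $S=\Mk$, $f=\id_{\Mk}$ by a Yoneda-type argument, and then to verify that case by unwinding the construction of $f_{[(\Eu,\Thu)]}$.

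First I would invoke the naturality of $\Ma$ and $\oMa$ established above. Given a morphism $f\colon S\to\Mk$, the naturality of $\oMa$ applied to $\id_{\Mk}\circ f=f$ gives $\oMa_S(f)=\Mod(f)\bigl(\oMa_{\Mk}(\id_{\Mk})\bigr)$, and since by definition $\oMa_{\Mk}(\id_{\Mk})=[(\Eu,\Thu)]$, this reads $\oMa_S(f)=\Mod(f)\bigl([(\Eu,\Thu)]\bigr)$. Applying $\Ma_S$ and using the naturality of $\Ma$ (with $x=[(\Eu,\Thu)]\in\Mod(\Mk)$ and the morphism $f$) one obtains
\[
\Ma_S\bigl(\oMa_S(f)\bigr)=\Ma_S\bigl(\Mod(f)([(\Eu,\Thu)])\bigr)=\Ma_{\Mk}\bigl([(\Eu,\Thu)]\bigr)\circ f=f_{[(\Eu,\Thu)]}\circ f\,.
\]
Hence the whole statement follows once we prove that $f_{[(\Eu,\Thu)]}=\id_{\Mk}$, i.e. that the morphism attached to the universal family is the identity.

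Next I would establish this universal case by running the construction of $f_{[(\Fa,\Theta)]}$ on the pair $(\Fa,\Theta)=(\Eu,\Thu)$. By Proposition \ref{ProUnivMon} the sheaf $\Eu$ is the cohomology of the universal monad $\Mo$, which, by the canonicity of the construction in Proposition \ref{teo1} together with Lemma \ref{pro3}, is canonically isomorphic to $M(\Eu)$. Over an affine open $V_a\subseteq\Mk$ of the chosen cover, trivializing $\hN$ and satisfying the hypotheses of Corollary \ref{corLocOnS}, the direct images of the morphisms of $\Mo$ produce a morphism $\fb_a\colon V_a\to\Lk$; by Proposition \ref{proEu} one has $\fb_a(m)=\tau(\theta)$ whenever $m=[\theta]$. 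The framing $\Thu$ then lifts $\fb_a$ to $\tf_a\colon V_a\to\Pk$, whose value at a closed point $m=[\theta]$ is the framing of the universal family at $m$. By the very construction of $\Eu=\left(\q_*\tE\right)^G$ and of $\Thu$ as the invariants of the tautological trivialization $\tTheta$, the framed sheaf $\bigl((\Eu)_m,(\Thu)_m\bigr)$ has isomorphism class $m$, so that $\tf_a(m)$ lies in the fibre $\pu^{-1}(m)$. Thus $\pu\circ\tf_a$ agrees with the inclusion $V_a\hookrightarrow\Mk$ on closed points.

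Finally, since the $f_a=\pu\circ\tf_a$ glue to $f_{[(\Eu,\Thu)]}$, this morphism coincides with $\id_{\Mk}$ on all closed points; as both are morphisms of the variety $\Mk$ into itself and $\Mk$ is reduced, they are equal. Combined with the reduction above, this yields $\Ma_S\circ\oMa_S=\id_{\Hom(S,\Mk)}$ for every $S$. The main obstacle is the penultimate step: verifying that the framing read off from $(\Eu,\Thu)$ at a point $m=[\theta]$ genuinely lands in the orbit $m$, that is, that the $(-)^G$-invariants construction of $\Thu$ out of the tautological framing $\tTheta$ recovers the tautological point of $\Pk$. This requires tracking the $\Gk$-equivariance of $\tTheta$ (and of the linearization $\Psi$) through the descent, and is where the bookkeeping is most delicate.
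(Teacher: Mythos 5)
Your proof is correct, but it takes a genuinely different route from the paper's. You reduce, via naturality, to a single universal case: using $\Ma_S\bigl(\oMa_S(f)\bigr)=f_{\left[\left(\Eu,\Thu\right)\right]}\circ f$ (which is exactly the identity \eqref{eqMa} proved in the naturality lemma, applied with $\varphi=f$ and the family $\left(\Eu,\Thu\right)$ on $\Sigma_n\times\Mk$) together with $\oMa_{\Mk}\left(\id_{\Mk}\right)=\left[\left(\Eu,\Thu\right)\right]$, so that everything collapses to showing $f_{\left[\left(\Eu,\Thu\right)\right]}=\id_{\Mk}$, which you verify on closed points and conclude by reducedness of $\Mk$. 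The paper instead takes an arbitrary $g\colon S\to\Mk$ and proves $g=f_{\left[\left(g_\Sigma^*\Eu,g_\infty^*\Thu\right)\right]}$ directly: it picks a local section $\sigma$ of $\pu$ over a trivializing open subset containing $\im g$ (available because $\Pk$ is a locally trivial principal $\Gk$-bundle by Theorem \ref{proMk}), uses it with Lemma \ref{lmInjSurj} to see that $g_\Sigma^*\Mo$ is a monad, identifies $g_\Sigma^*\Mo\simeq M\left(g_\Sigma^*\Eu\right)$ since the cohomologies agree and \cite[Lemma 4.1.3]{Ok} applies, and reads off $\left(g_\Sigma^*\Eu\right)_s\simeq\E_{\alpha(s),\beta(s)}$ at closed points. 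Your reduction buys conceptual economy --- only the tautological case needs checking, and there the injectivity of $\Au$ comes for free from Proposition \ref{ProUnivMon}, with no local section required --- while the paper's direct argument handles all $g$ at once at the cost of repeating the monad identification. Two cautions. First, your citation of Lemma \ref{pro3} for $\Mo\simeq M\left(\Eu\right)$ is slightly off: that lemma concerns monads on $\Sigma_n$, i.e.\ points of $\overline{L}_{\vec{k}}$, whereas here the monads live on $\Sigma_n\times\Mk$; the correct tool is the relative application of \cite[Lemma 4.1.3]{Ok}, exactly as the paper invokes it both in its proof of this lemma and in that of Lemma \ref{lmMaoMa}. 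Second, the step you flag as delicate --- that the point of $\Pk$ read off from $\left(\Eu,\Thu\right)$ at $m=[\theta]$ lies in the orbit $m$, which rests on $\tTheta$ being the tautological trivialization of $\tau^*\hN$ and on the $\Gk$-equivariance tracked through the $(-)^G$ descent --- is indeed the crux; note, however, that the paper's own proof is equally terse on the framing match (it ends right after the pointwise identification of the cohomologies), so your level of detail is no weaker than the paper's.
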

\begin{proof}
Let $g:S\longrightarrow\Mk$ be any scheme morphism.
 We need to show that
\begin{equation*}
g=f_{\left[\left(g_\Sigma^*\Eu,g_\infty^*\Thu\right)\right]}\,;
\end{equation*}
for simplicity we set $f=f_{\left[\left(g_\Sigma^*\Eu,g_\infty^*\Thu\right)\right]}$. Let $M\left(g_\Sigma^*\Eu\right)$ be a monad
\begin{equation*}
\xymatrix{
0 \ar[r] & \Ua \ar[r]^A & \Va \ar[r]^B & \Wa \ar[r] & 0
}
\end{equation*}
  associated with $g_\Sigma^*\Eu$. We can work locally by assuming that $S=\Spec\Sl$ satisfies the hypotheses of Corollary \ref{corLocOnS} for the sheaves $\Ua$, $\Va$, $\Wa$ and that $\im g\subseteq W$, where $W\subseteq \Mk$ is a trivializing open subset for the $\Gk$-principal bundle $\Pk$. Thus, there exists a local section $\sigma:W\longrightarrow\Pk$ lifting $g$ to $\Pk$:
\begin{equation}
\xymatrix{
& \Pk \ar[d]^\pu\\
S \ar[ru]^{\sigma\circ g} \ar[r]_-g & \Mk\,.
}
\label{eqLiftg}
\end{equation}
Under our assumptions, the complex
\begin{equation*}
\xymatrix@C+1em{
 g_\Sigma^*\Mo:&  0 \ar[r] &  g_\Sigma^*\Uu \ar[r]^{g_\Sigma^*\Au} & g_\Sigma^*\Vu \ar[r]^{g_\Sigma^*\Bu} & g_\Sigma^*\Wu \ar[r] & 0
}
\end{equation*}
is a monad. Indeed the morphism $g_\Sigma^*\Au$ is injective, as it follows from diagram \eqref{eqLiftg} and Lemma \ref{lmInjSurj}. This monad is isomorphic to $M\left(g_\Sigma^*\Eu\right)$: as a matter of fact, their cohomologies are isomorphic and \cite[Lemma 4.1.3]{Ok} applies. Because of this isomorphism, in view of Proposition \ref{proEu} one has
\begin{equation*}
\left(g_\Sigma^*\Eu\right)_s\simeq\E_{\alpha(s),\beta(s)},
\end{equation*}
where $g(s)=[\theta(s)]$ and  $(\alpha(s),\beta(s))=\tau(\theta(s))$ for any closed point $s\in S$. This ends the proof.
\end{proof}

\medskip\frenchspacing

%

\end{document}